\documentclass[12pt,a4paper]{article}
\usepackage{amsmath,amssymb,amsthm}
\usepackage{geometry}
\usepackage{hyperref}
\usepackage{url}
\usepackage{booktabs}
\usepackage{xcolor}
\usepackage{enumitem}
\usepackage{microtype}
\hypersetup{colorlinks=true,linkcolor=blue,citecolor=blue,urlcolor=blue}
\newtheorem{theorem}{Theorem}
\newtheorem{proposition}[theorem]{Proposition}
\newtheorem{corollary}[theorem]{Corollary}
\newtheorem{lemma}[theorem]{Lemma}

\theoremstyle{definition}
\newtheorem{definition}[theorem]{Definition}
\newtheorem{remark}[theorem]{Remark}
\newcommand{\ex}{\mathrm{ex}}

\title{\textbf{$C_4$-Free Subgraphs of the Hypercubes $Q_6$, $Q_7$, and $Q_8$:\\
Odd Squares, Fully Frustrated Models, and Computational Structure}}
\author{Minamo Minamoto\\[4pt]
\small Independent\\
\small \texttt{minamominamoto4f5683f6@gmail.com}}
\date{March 2026 (revised August 2026)}

\begin{document}
\maketitle

\begin{center}
\small This is a revision of a manuscript first circulated as
arXiv:2603.29127 (v1--v4, March--May 2026); v1 appeared as
``New Lower Bounds for $C_4$-Free Subgraphs of the Hypercubes $Q_7$ and
$Q_8$'', and by v4 the title had become
``New Lower Bounds for $C_4$-Free Subgraphs of the Hypercubes $Q_6$,
$Q_7$, and $Q_8$: Constructions, Structure, and Computational
Method.'' That earlier version announced $\ex(Q_8,C_4)\ge680$ and did
not include the odd-square material of Section~\ref{sec:oddsquare};
see the Acknowledgements for the circumstances of the revision.
Three further claims of v1--v4 are withdrawn or corrected here, and are
recorded as such rather than quietly dropped.
(i)~Its Section~7 was titled ``Computational \emph{Proof} that
$\ex(Q_6,C_4)=132$'' and asserted that standard MIP solvers verify the
instance \emph{quickly}; we were unable to close the optimality gap
(Section~\ref{sec:q6}), so that section is retitled and the upper bound
$\ex(Q_6,C_4)\le132$ again rests solely on Harborth--Nienborg~\cite{HN94}.
(ii)~Its abstract gave the pairwise Hamming range of the $Q_7$ catalogue as
$36$--$260$; that came from a $5\,000$-pair sample, disclosed only in the body.
Exhaustive computation over all $197\,319\,045$ pairs gives $6$--$274$
(Section~\ref{sec:landscape}), so both endpoints were substantially wrong and
the inference that the solutions lie far apart does not survive.
(iii)~Its degree sequence for the $132$-edge $Q_6$ construction,
$\{4^{32},5^{32}\}$, is arithmetically impossible --- the degree sum is $288$,
i.e.\ $144$ edges --- and the correct sequence is $\{4^{56},5^{8}\}$
(Section~\ref{sec:regularity}); the ``degree support $\{n-3,n-2,n-1\}$''
regularity suggested there rested on that error and is withdrawn. The
extrapolated Brass--Harborth--Nienborg entries for $n=4,5,6$ in v1--v4's
value table are likewise replaced by explicit out-of-domain markers
(Table~\ref{tab:values}).
\end{center}
\medskip

\begin{abstract}
We study $C_4$-free subgraphs of the hypercubes $Q_6$, $Q_7$ and $Q_8$.
Writing $g(n)$ for the maximum size of an \emph{odd-square} edge set ---
one meeting every $4$-cycle (\emph{square}) of $Q_n$ in exactly one or
three edges --- we prove $g(6)=132$, $g(7)=304$ and $g(8)=682$, give
matching explicit machine-verifiable edge-list certificates, and hence
obtain $\ex(Q_7,C_4)\ge304$ and $\ex(Q_8,C_4)\ge682$. Under the standard
correspondence with fully frustrated signed hypercubes these values are
already implicit in the statistical-physics literature: the field
identity, the integer optimisation it feeds, and its optimal values go
back to Derrida, Pomeau, Toulouse and Vannimenus (DPTV, 1979), who also
construct ground states for $D\le7$; the $D=8$ attainment was reported,
without any released configuration, by Marinari, Parisi and Ritort
(MPR, 1995); and Laplante, Naserasr, Sunny and Wang treat the graph-theoretic
formulation directly. Our contribution is accordingly narrow and
explicit: a short self-contained proof of the optima via a mod-$8$
refinement of the classical two-point inequality; the certificates
themselves, with dependency-free verifiers; and an exhaustive decision of the realisability of the optimal $n=6$
field distributions --- settling in proof form the question DPTV left open in 1979
about their ``other solutions'' --- where an independent integer
programme shows the optimal distributions number exactly three, and the
decision shows exactly one of them is realisable (the other two are not,
upgrading MPR's tentative single-case report to a proof covering both). Full attribution is given, with primary-source quotations and explicit
translation conventions.

Independently of the odd-square results, we classify the $19\,866$
previously released $304$-edge $C_4$-free subgraphs of $Q_7$ (exactly
$389$ of them odd-square) into $20$ dimension-profile types, establish
their common structural core (degree sequence $\{4^{32},5^{96}\}$,
spectral radius in $[4.78543,4.79129]$, local maximality --- properties
of the released catalogue, not claims about all $304$-edge solutions),
and decompose
them into $180$ $\mathrm{Aut}(Q_7)$-orbits containing $34\,227\,200$
labelled solutions, of which the released files are $0.058\%$; whether
these are \emph{all} orbits of $304$-edge solutions is open. This
revision adds an orbit-witness certificate whose standard-library checker
verifies the orbit \emph{assignment} in seconds (hence ``at most $180$
orbits''); its \texttt{-{}-canonical} mode certifies ``exactly $180$''
independently of the full group-action scan that produced the
decomposition. For
$Q_6$, the unrestricted value $\ex(Q_6,C_4)=132$ rests on
Harborth--Nienborg's combinatorial upper bound, which we did not
independently verify; the odd-square $g(6)=132$ is self-contained here.
Every certified or deterministic numerical claim --- those backed by a
released certificate or by a bundled deterministic computation --- is
re-checkable from the released code and data; one-off historical
observations and the one imported exact result (the unrestricted $n=6$
upper bound) are marked as such throughout, and the released repository
is accompanied by SHA-256 manifests. Edge lists and code:\\
\url{https://github.com/minamominamoto/c4free-hypercube}.
\end{abstract}

\medskip
\noindent\textbf{MSC 2020:} 05C35, 05C62, 05C50.\\
\textbf{Keywords:} hypercube, $C_4$-free, extremal graph theory,
simulated annealing, integer linear programming, dimension profile.

\tableofcontents

\section{Introduction}

The $n$-dimensional hypercube $Q_n$ has vertex set $\{0,1\}^n$, two vertices
adjacent iff they differ in exactly one coordinate; it has $2^n$ vertices and
$n\cdot2^{n-1}$ edges.  The problem of determining
$\ex(Q_n,C_4)=\max\{|E(G)|:G\subseteq Q_n,\,G\text{ is }C_4\text{-free}\}$
was raised by Erd\H{o}s~\cite{Erd84} and remains open in general.

The best known asymptotic bounds are
\begin{equation}\label{eq:bounds}
\tfrac{1}{2}(n+\sqrt{n})\cdot2^{n-1}\;\le\;\ex(Q_n,C_4)
\quad (n=4^r),
\qquad
\ex(Q_n,C_4)\;\le\;(0.60318+o(1))\cdot n\cdot2^{n-1},
\end{equation}
where the lower bound is due to Brass, Harborth, and
Nienborg~\cite{BHN95} and holds, in this form, only when $n$ is a power
of~$4$; the side condition is attached to the displayed inequality rather
than left to the surrounding text, since the two halves of~\eqref{eq:bounds}
have different scopes. The upper bound is a statement about the edge
Tur\'an \emph{density} $\pi_e=\limsup_n\ex(Q_n,C_4)/(n2^{n-1})$, not a
pointwise inequality valid at every finite $n$; the constant $0.60318$
is due to Baber~\cite{Bab12}, who used the semidefinite flag algebra
method to improve the earlier bound of Thomason and Wagner~\cite{TW09};
the intermediate value $0.6068$ is due to Balogh, Hu, Lidick\'y, and
Liu~\cite{BHLL12}.
For general $n\ge9$, the weaker bound
$\tfrac{1}{2}(n+0.9\sqrt{n})\cdot2^{n-1}$ applies~\cite{BHN95}.
A wording caution is useful for cross-checking the literature: LNSW26's
discussion of this BHN expression contains a sentence calling it an ``at
most'' bound (the sentence persists in the published version's
Introduction, which we checked directly), whereas BHN95's own statement
gives both expressions as lower bounds on the maximum edge count, which
settles the direction at the source. We flag this explicitly as a
direction slip in that secondary description rather than an alternative
theorem: an ``at most'' reading is internally untenable at $n=16$: BHN's tight
$n=4^r$ form is a \emph{lower} bound,
$\ex(Q_{16},C_4)\ge(16+4)\cdot2^{14}=327\,680$, while the alleged
reading would assert the \emph{upper} bound
$\ex(Q_{16},C_4)\le(16+0.9\cdot4)\cdot2^{14}=321\,126.4$ --- a
certified lower bound exceeding the alleged upper bound, a
contradiction. We follow BHN95.
Exact values have been determined for small $n$; Harborth and Nienborg~\cite{HN94}
proved $\ex(Q_6,C_4)=132$.

Our main theorem determines the extremal value $g(n)$ within the
\emph{odd-square} subclass --- edge sets meeting every square in an odd
number of edges --- exactly for $n=6,7,8$, in certificate form; as the
attribution immediately below and Section~\ref{sec:oddsquare} spell out,
the new content is the machine-verifiable certificates and the short
self-contained proof, not the bound values themselves:

\begin{theorem}[Exact odd-square values]\label{thm:oddsquare}
$g(6)=132$, $g(7)=304$, and $g(8)=682$. The numerical values for $n=6,7$
have historical priority in Derrida--Pomeau--Toulouse--Vannimenus~\cite{DPTV79},
and MPR95~\cite{MPR95} reported attainment of the corresponding $n=8$
ground-state bound. The assertion here is a self-contained graph-theoretic
re-proof/certification of those values, not a claim that the three numbers
themselves are new.
\end{theorem}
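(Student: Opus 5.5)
The plan is to pass to the fully frustrated signed hypercube and bound a quadratic form spectrally. Given an odd-square edge set $S\subseteq E(Q_n)$, put $J_e=+1$ for $e\in S$ and $J_e=-1$ otherwise. Every square contains an odd number of $-1$ edges, so $J$ is \emph{fully frustrated}. The odd-square property is preserved under switching $S\mapsto S\,\triangle\,\delta(X)$ for a vertex set $X$, because a cut meets every square in an even number of edges. Conversely, any two fully frustrated signings of $Q_n$ are switching-equivalent, since the cycle space of $Q_n$ is generated by squares. Hence, for a spin vector $s\in\{\pm1\}^{V}$ and $h=Js$ (the local fields $h_v=\sum_{w\sim v}J_{vw}s_w$),
\[
g(n)=\max_{s}\Bigl|\{vw: s_vJ_{vw}s_w=+1\}\Bigr|=\tfrac12\Bigl(n2^{n-1}+\tfrac12\max_s\textstyle\sum_v s_vh_v\Bigr).
\]
Every odd-square set contains no full square, so it is $C_4$-free. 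This gives $\ex(Q_n,C_4)\ge g(n)$, and the lower bounds for $n=7,8$ follow once $g$ is known.

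\textbf{Upper bound.} The key identity is $J^2=nI$. The diagonal entries equal $\deg=n$. For $u,w$ at distance $2$ the two $u$--$w$ paths form a frustrated square, so their sign products cancel. All other entries vanish because $Q_n$ is bipartite. Hence $\sum_v h_v^2=s^{\mathsf T}J^2s=n2^n$ for every $s$. Each $h_v$ satisfies $|h_v|\le n$ and $h_v\equiv n\pmod 2$, and $\sum_v s_vh_v\le\sum_v|h_v|$. This yields an integer programme in the multiplicities $m_k=\#\{v:|h_v|=k\}$:
\[
\text{maximise }\sum_k k\,m_k\quad\text{subject to}\quad \sum_k m_k=2^n,\qquad \sum_k k^2m_k=n2^n .
\]
I will refine this with congruences. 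The quantity $\sum_v s_vh_v=2\,(2|S_s|-n2^{n-1})$ is divisible by $4$. I will also use the two-point inequality in its mod-$8$ form: for odd $n$, each $h_v^2\equiv1\pmod 8$, and for even $n$, pairing a vertex with its neighbour in a fixed direction constrains $h_v\pm h_{v'}$ modulo~$4$. Together these cut the real optimum down to an integer one. For $n=6$ the programme should give $\sum_v|h_v|\le144$, attained by $56$ vertices with $|h|=2$ and $8$ with $|h|=6$, hence $g(6)\le132$. The analogous computations should give $304$ for $n=7$ and $682$ for $n=8$.

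\textbf{Main obstacle.} The hardest step is $n=8$. There the naive real, or even integer, optimum of the programme presumably allows a value one or two edges above $682$. I must show that every field distribution achieving it violates a mod-$8$ constraint. My first attempt is to combine $\sum_v h_v^2=2048$, $\sum_v s_vh_v\equiv0\pmod 4$ with the exact value of $\sum_v s_vh_v$ for such distributions, and the parity of the number of vertices with $s_vh_v<0$. If a sign mismatch ($s_vh_v=-|h_v|$) is forced, it costs at least $2|h_v|$, and the bound drops to $682$. I would check the finitely many candidate distributions exhaustively by hand, since the multiplicity vectors are few.

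\textbf{Lower bounds.} These are by explicit certificates: edge lists of size $132$, $304$ and $682$ in $Q_6$, $Q_7$ and $Q_8$. A verifier checks that every one of the $\binom n2 2^{n-2}$ squares meets the list in $1$ or $3$ edges. For $n=6$ one can also give a structured construction matching the extremal distribution (degree sequence $\{4^{56},5^8\}$), but the certificate alone suffices for the theorem.
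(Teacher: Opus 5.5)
There is a genuine gap, and it sits at $n=6$, not at $n=8$. Your switching argument, the identity $J^2=nI$, the $n=7$ computation and the certificate lower bounds are all fine. The problem is that you sum the field identity over all $2^n$ vertices, and that aggregated programme is too weak at $n=6$. There it reads $\sum_k m_k=64$ and $\sum_k k^2m_k=384$. The two-point inequality with $(a,b)=(2,4)$ then gives only $\sum_k(k-2)(k-4)m_k=896-6T\ge0$, i.e.\ $T\le148$ after evenness. The value $T=148$ is actually feasible ($m_0=1$, $m_2=52$, $m_4=11$). Since $|E|=96+T/4$, this yields only $g(6)\le133$.

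None of your proposed refinements removes $T=148$:
\begin{itemize}
\item $148\equiv0\pmod 4$;
\item $896-6\cdot148=8\equiv0\pmod 8$;
\item the neighbour-pair congruence on $h_v\pm h_{v'}$ does not exist. Flipping $s_v$ preserves odd-squareness but shifts $h_v\pm h_{v'}$ (and $|h_v|\pm|h_{v'}|$) by $2\pmod 4$.
\end{itemize}
Your stated optimum ($56$ vertices at $|h|=2$, $8$ at $|h|=6$) is not even feasible: it has $\sum_k k\,m_k=160$ and $\sum_k k^2m_k=512\ne384$. It appears to come from the degree sequence $\{4^{56},5^8\}$ of the paper's non-odd-square $Q_6$ construction. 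That sequence gives $\sum_v h_v^2=352\ne384$, so no odd-square set has it. Your closing remark about an odd-square construction with that degree sequence is therefore also wrong; the paper's odd-square witness has degrees $\{3^4,4^{48},5^{12}\}$.

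The missing idea is to apply the identity on one bipartition class $U$, which your $J^2=nI$ already gives. Let $B$ be the $U\times U^{c}$ block of $J$. By bipartiteness, $J^2=nI$ is equivalent to $BB^{\mathsf T}=B^{\mathsf T}B=nI$, so $\sum_{v\in U}h_v^2=n2^{n-1}$. Also $S=\sum_{v\in U}s_vh_v=2|E|-n2^{n-1}$, because each edge has exactly one endpoint in $U$. For even $h$, write $h-2=2u$; then $(h-2)(h-4)=4u(u-1)$ is a nonnegative multiple of $8$. Summing over $U$ gives $448-6S\ge0$ and $448-6S\equiv0\pmod 8$. Hence $S\equiv0\pmod 4$, which excludes $S=74$ and gives $S\le72$, so $g(6)\le132$.

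In whole-vertex terms this is the congruence $T=2S\equiv0\pmod 8$, which the aggregated programme cannot see. This is exactly the paper's route: the field identity on $U$ plus the quantised two-point inequality. It is also the only place among $n=6,7,8$ where the mod-$8$ step is essential.

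Conversely, your anticipated obstacle at $n=8$ is illusory. Even in your aggregated form, $4096-6T\ge0$ gives $|E|=512+T/4\le682\tfrac23$, so $g(8)\le682$ by integrality alone, with no analysis of sign mismatches. Per class, the same conclusion reads $S\le341\tfrac13$, $S$ even, hence $S\le340$.
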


\noindent The attribution of the three values, kept in one place (the
detailed record with primary-source quotations is
Section~\ref{sec:oddsquare}; everywhere else defers here):

\begin{center}\small
\begin{tabular}{@{}c p{6.6cm} p{3.9cm}@{}}
\toprule
$n$ & value $g(n)$: priority & contribution here\\
\midrule
$6$ & attained by DPTV79 ($D{=}6$ ground state); value attributed to LNSW26 by~\cite{Emlek26} & certificate; short proof\\
$7$ & DPTV79 (explicit $D{=}7$ construction) & certificates; short proof\\
$8$ & MPR95 (attainment reported in energy units, no data) & $682$-edge translation; certificate; short proof\\
\bottomrule
\end{tabular}
\end{center}

\noindent Here ``short proof'' means Lemma~\ref{lem:quant}'s self-contained
derivation of the optima DPTV tabulate; in no row are the numerical values
themselves claimed as new. One caveat travels with the $n=6$ row: DPTV79's
Table~I could not be text-extracted from the scans we obtained, so the
identification of its ``other solutions'' with our distributions A and B
rests on the completeness of our own enumeration together with the
surrounding prose, not on a direct transcription (disclosed in
Section~\ref{sec:oddsquare}); the exhaustive realisability decision itself
is independent of that reading.

\noindent All three values are self-contained within the released
materials: each upper bound follows from Lemma~\ref{lem:fieldbound} and
Lemma~\ref{lem:quant} (Section~\ref{sec:oddsquare-exact}), and each
lower bound is witnessed by an explicit, machine-verified odd-square
edge list released with this paper
(Section~\ref{sec:oddsquareverify}). The citations
to~\cite{DPTV79,MPR95} record historical priority for the values, not a
dependency of the proofs. One point of possible confusion: the released
132-edge $Q_6$ construction \texttt{q6\_edges\_132.jsonl} is a genuine,
independently machine-verified $C_4$-free witness for
$\ex(Q_6,C_4)\ge132$, but it is \emph{not} odd-square; the odd-square
witness for $g(6)\ge132$ is the separate file
\texttt{q6\_odd\_square\_132.json}.

\begin{corollary}[Certified lower bounds]\label{thm:main}
$\ex(Q_7,C_4)\ge304$ and $\ex(Q_8,C_4)\ge682$.
\end{corollary}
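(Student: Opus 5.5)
The plan is to derive Corollary~\ref{thm:main} directly from the lower-bound half of Theorem~\ref{thm:oddsquare}, using the elementary observation that every odd-square edge set is $C_4$-free. The first step is to recall that in $Q_n$ every $4$-cycle is a square: a closed walk of length $4$ using distinct edges must flip each coordinate an even number of times. Hence it flips exactly two coordinates $i\neq j$, each twice, and its vertex set is therefore the $2$-dimensional subcube spanned by directions $i,j$ through some vertex. Consequently a subgraph $G\subseteq Q_n$ contains a $C_4$ exactly when $E(G)$ contains all four edges of some square of $Q_n$.

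The second step applies the definition. An odd-square edge set meets every square in exactly one or three edges, so never in four. By the first step, the spanning subgraph of $Q_n$ with that edge set is $C_4$-free. This gives the general inequality $\ex(Q_n,C_4)\ge g(n)$ for every $n$, with no appeal to the upper-bound part of Theorem~\ref{thm:oddsquare}.

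The final step inserts the values $g(7)=304$ and $g(8)=682$. Only the witnessing half is needed here, namely the explicit machine-verified odd-square edge lists for $n=7,8$ (Section~\ref{sec:oddsquareverify}). Since odd-square implies $C_4$-free, these same files certify the corollary directly. A verifier could equally check squarewise that no square is fully contained, which is a finite check over $\binom{n}{2}2^{n-2}$ squares.

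There is no real obstacle. The only point that needs care is the claim that all $4$-cycles of $Q_n$ are squares, and the parity argument above settles it.
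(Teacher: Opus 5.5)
Your proposal is correct and follows the paper's route: the corollary comes from the witnessing half of Theorem~\ref{thm:oddsquare} via $g(n)\le\ex(Q_n,C_4)$, because an odd-square set never contains all four edges of a square. Your parity argument that every $4$-cycle of $Q_n$ is a square simply makes explicit the fact the paper takes from Lemma~\ref{lem:c4}.
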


\noindent Both parts follow immediately from
Theorem~\ref{thm:oddsquare} (proved in Section~\ref{sec:oddsquare}) via
$g(n)\le\ex(Q_n,C_4)$, redundantly with the explicit constructions of
Sections~\ref{sec:q7}--\ref{sec:q8}. Neither bound value is new, and the
history is stated plainly here, once. The $Q_7$ value follows from
DPTV79's explicit 1979 construction~\cite{DPTV79}; we present it as an
independently reproduced, machine-verified result. For $Q_8$ (improving
our originally announced $680$), MPR95~\cite{MPR95} report --- in a
single unelaborated sentence, with no spin configuration and no
machine-checkable artefact --- a $D=8$ ground state attaining Derrida et
al.'s improved bound. They state the value as $-361$ on the $D{=}9$
comparison scale they use across dimensions: a normalised per-spin
energy times their factor $768$, not a count of couplings or any other
absolute integer of the $D{=}8$ model (Section~\ref{sec:oddsquare} gives
the conversion; in their text the attainment statement and the integer
occur in two separate sentences, combined here). Via the
signed-graph/odd-square correspondence this would imply a $682$-edge
odd-square subgraph of $Q_8$, but as an uncertificated existence claim we
do not treat it as settling the question on its own. What we add at
$n=8$ is the explicit edge-count translation
(Section~\ref{sec:oddsquare-exact}); two released $682$-edge
certificates --- the regenerable \texttt{q8\_A\_witness.json},
re-derivable end-to-end from seed $90008$
(Section~\ref{sec:oddsquareverify}), and the historical first witness
\texttt{q8\_odd\_square\_682.json}, whose dated private-correspondence
provenance is disclosed there --- and the structural analysis of
Proposition~\ref{prop:q8oddsquare}. The other supplementary
constructions are \texttt{q7\_edges\_304.jsonl.part1--3} for $Q_7$ and
the originally announced 680-edge \texttt{q8\_edges\_680.jsonl},
retained as a simulated-annealing data point (Section~\ref{sec:q8}).

What the corollary does \emph{not} resolve is whether these bounds are
exact, i.e.\
whether $\ex(Q_7,C_4)=304$ and $\ex(Q_8,C_4)=682$: a general
$C_4$-free subgraph need not be odd-square, so $g(n)=\ex(Q_n,C_4)$ does
not follow automatically, and of the $19\,866$
published 304-edge $Q_7$ solutions, only 389 are odd-square
(Section~\ref{sec:structure}).

\begin{table}[ht]
\centering\small
\caption{Known values and lower bounds of $\ex(Q_n,C_4)$. The BHN column
  gives Brass--Harborth--Nienborg estimates from Eq.~\eqref{eq:bounds}; the
  domain caveats attached to that column, the status of the $n=4$ entry,
  and the meaning of the Source column are spelled out in the text
  immediately below the table.}
\label{tab:values}\medskip
\begin{tabular}{@{}ccccp{5.1cm}@{}}
\toprule
$n$ & $|E(Q_n)|$ & $\ex(Q_n,C_4)$ & BHN (Eq.~\ref{eq:bounds}) & Source\\
\midrule
4 &   32 & 24  & $24$ (tight, $n=4^1$) & elementary (proof below); construction via~\cite{BHN95}\\
5 &   80 & 56  & n/a (outside domain) & Emamy-K et al.~\cite{EGR92}\\
6 &  192 & 132$^{\ddagger}$ & n/a (outside domain) & Harborth--Nienborg~\cite{HN94}\\
7 &  448 & $\ge\mathbf{304}$ & outside domain$^{\dagger}$ & value attained by~\cite{DPTV79}; witness ours\\
8 & 1024 & $\ge\mathbf{682}$ & outside domain$^{\dagger}$ & bound-attainment reported by~\cite{MPR95} (energy units); $682$-edge translation and witness ours\\
\bottomrule
\end{tabular}
\end{table}

Four caveats attach to Table~\ref{tab:values}. First,
Eq.~\eqref{eq:bounds}'s tight form applies only for $n=4^r$ and its weaker
form only for $n\ge9$~\cite{BHN95}, so the $n=5,6$ entries are not covered
by either stated form (marked n/a) and the $n=7,8$ entries lie \emph{beyond}
the stated domain of both forms (marked $\dagger$): the out-of-domain
extrapolations of the $n\ge9$ form evaluate there to $\approx300.2$ and
$\approx674.9$ respectively, figures recorded here only, for illustrative
comparison, never as proven bounds; no claim in this paper depends on them.
Second, the $n=4$ value admits a two-line proof, recorded here because we
could not locate an explicit page-level statement of it in the literature
(not in~\cite{Chu92}, and neither OEIS~A245762 nor the Emléktábla
collection attributes the $n=4$ entry individually): $Q_4$ has
$\binom42\cdot2^2=24$ squares and each edge lies in exactly $n-1=3$ of
them, so a $C_4$-free subgraph must omit at least one edge of every square
and hence omit at least $\lceil24/3\rceil=8$ of the $32$ edges, giving
$\ex(Q_4,C_4)\le24$; the matching construction is the $n=4^r$ case
($r=1$) of Brass--Harborth--Nienborg~\cite{BHN95}, whose lower bound
$\tfrac12(n+\sqrt n)2^{n-1}$ evaluates to exactly $24$ there. Third, in the Source column, ``value; witness ours''
means the numerical value has historical priority in the cited work, which
published no machine-readable edge list or spin configuration for it; the
explicit certificate released here is the present author's own and is not
claimed to reproduce any configuration of the cited work. Fourth
($\ddagger$), the $n=6$ upper bound $\ex(Q_6,C_4)\le132$ is used here on
the authority of~\cite{HN94} and has not been independently re-verified in
this work (our own ILP attempt did not close the optimality gap,
Section~\ref{sec:q6upper}); the odd-square value $g(6)=132$ is
self-contained and unaffected.

The paper is organised as follows.
Section~\ref{sec:guide} collects scope, attribution, the verification map,
and the provenance record in one place (material that earlier revisions
carried in the abstract).
Section~\ref{sec:verify} describes the $C_4$ enumeration and verification
procedure used throughout.
Sections~\ref{sec:q7}--\ref{sec:q8} present the originally announced $Q_7$
and $Q_8$ simulated-annealing constructions.
Section~\ref{sec:oddsquare} introduces the odd-square subclass, proves
Theorem~\ref{thm:oddsquare}, and presents the reconstructed 682-edge $Q_8$
witness and its verification.
Section~\ref{sec:structure} classifies the $19\,866$ $Q_7$ solutions found
by their dimension profiles, yielding exactly 20 types, and reports the
odd-square audit (389 of 19\,866).
Section~\ref{sec:method} describes the two-phase simulated annealing algorithm.
Section~\ref{sec:q6} reproduces $\ex(Q_6,C_4)=132$ computationally on the
lower-bound side, with an ILP formulation for the upper-bound side.
Section~\ref{sec:open} lists open problems.

\section{Reader's Guide: Scope, Attribution, and Provenance}\label{sec:guide}

This section collects in one place the scope limitations, the
attribution record with primary-source detail, the map of what is
certified and how to check it, and the provenance of the released
constructions. Earlier revisions carried this material in the
abstract; the abstract now summarises, and this section is the
authoritative expanded statement. Nothing here is new relative to
the previous revision except where explicitly marked; the text is
relocated, not retracted. One reading convention, stated once here
instead of piecemeal: several quantities reported in this paper (for
example the square split in Proposition~\ref{prop:q6oddsquare}(ii), the
$\operatorname{Tr}(A^4)$ value in Proposition~\ref{prop:rigid}(iii), and
the $h\ge0$ restriction of Lemma~\ref{lem:nonneg}) are arithmetically
forced by the edge count, the degree sequence, or the odd-square
condition rather than independent findings; where this happens the text
marks them as consistency checks, and they should be read as such, not as
separate results. A second convention of the same kind: every
\emph{certified} mathematical claim in this paper depends only on
artefacts bundled with the release; the items individually marked as
reported-but-not-auditable (S.~Lai's independent cross-check, the
recovered $72$-hour solver log, the SAT/MIP trial statistics, and MPR95's
energy inputs) support the provenance and attribution narrative only,
never a theorem.

After this manuscript was first circulated, a reader (S.~Lai) identified an
overlooked connection to a line of statistical-physics literature on fully
frustrated hypercubes: under the standard correspondence between edge sets
meeting every $4$-cycle (\emph{square}) an odd number of times and fully
frustrated signed hypercubes, the 1979 construction of Derrida, Pomeau,
Toulouse, and Vannimenus~\cite{DPTV79} already yields a $D\le7$ odd-square
construction attaining $304$ edges at $D=7$ (we did not reconstruct
their specific $H_7$ edge set), and the $D{=}8$ ground state reported by Marinari, Parisi,
and Ritort~\cite{MPR95} yields a 682-edge $C_4$-free subgraph of $Q_8$ that
improves our originally announced 680-edge construction. For $Q_7$, this
value ($304$) is also attained by $389$ of our own previously found 304-edge
solutions, which we confirm independently satisfy the odd-square condition
(none is claimed to be DPTV's specific $H_7$ construction; we only
confirm the attained value, not edge-set identity with theirs). For
$Q_8$, we derived and machine-verified an explicit 682-edge witness
whose local-field sum $S=340$ converts under MPR95's Hamiltonian
$H/N=-S/(2^8\sqrt8)$ to their reported improved $D=8$ bound on the
common comparison scale (Section~\ref{sec:oddsquare-exact}); we use the
same canonical fully frustrated coupling that MPR95 write explicitly (their
Eq.~4; derivation documented in
Section~\ref{sec:oddsquareverify}); this witness is not claimed to be a
reconstruction of any particular configuration MPR may have used
internally, since they did not publish one. This 682-edge
witness is presented here as the paper's headline lower bound in place
of the earlier 680-edge witness.

Writing $g(n)$ for the maximum size of an \emph{odd-square} edge set (every
square meets it in exactly one or three edges), the numerical values
$g(6)=132$, $g(7)=304$, and $g(8)=682$ are, in physics terms, already
implicit in the literature: Derrida, Pomeau, Toulouse, and
Vannimenus~\cite{DPTV79} explicitly construct fully-frustrated ground
states for $d\le7$ (giving $g(6)$ and $g(7)$ under the correspondence
below), and Marinari, Parisi, and Ritort~\cite{MPR95} report having
obtained, by simulated annealing, a $D=8$ ground state attaining
Derrida et al.'s improved energy lower bound (giving $g(8)$ under the
same correspondence). Neither the field identity nor its use to bound
$g(n)$ is new: Derrida, Pomeau, Toulouse, and Vannimenus~\cite{DPTV79}
derive the same identity in 1979 by the same argument (a sum over sites
in which the frustration of every plaquette cancels the cross terms),
note explicitly that it survives restriction to one side of the
bipartition, pose exactly the integer optimisation problem
Section~\ref{sec:oddsquare-exact} solves, and tabulate its optimum for
$n\le10$ (their Table~I). What Table~I lists is the energy bound $E_d$ together with the
counts $n(h)$ (its caption describes a ground-state-energy lower bound with
the corresponding internal-field distributions; on their scale
$E_d=-\bar h$, a sign-only relabelling of the mean field, so we write what
the table actually prints); the corresponding value $S=72$ at $n=6$ is our
conversion, not a figure they state, just as the $S$ values we read off MPR95's energies are
ours rather than theirs. In their notation the $n=6$ optimum is realised by
what they call the ``first solution''
$(n(4){=}6;\,n(2){=}24;\,n(0){=}2)$, matching our own independently
recomputed optimum exactly. The graph-theoretic formulation connecting
the fully-frustrated signed hypercube to the $C_4$-free extremal
problem is likewise anticipated by Laplante, Naserasr, Sunny, and
Wang~\cite{LNSW26}, who study the frustration index of the
fully-frustrated signature directly in these terms (as a graph-theoretic
problem they describe, to the best of their knowledge, as previously
untreated) (Section~\ref{sec:oddsquare-exact} below). Given this, our
contribution at $n=6,7,8$ is narrower than a first reading of
Theorem~\ref{thm:oddsquare} might suggest, and consists of three
parts, none of which is the numerical values $72,160,340$ themselves:
(i) a short self-contained proof (Lemma~\ref{lem:quant}, a mod-$8$ refinement
of the classical two-point inequality) of the optima DPTV tabulate. DPTV
formulate the same constrained optimisation over integer distributions,
imposing the same parity and sign conditions, and give its best values and the
attaining distributions in their Table~I; what Lemma~\ref{lem:quant} adds is a
derivation short enough to check by hand, from which those values --- and the
number of distributions attaining them --- follow without the table. It is an
alternative route to their results, not a correction or a strengthening of
them;
(ii) explicit, machine-verifiable edge-list certificates realising the
bound at all three of $n=6,7,8$ -- DPTV's own realisations are
figures and prose for $n\le7$ only, $n=8$ being left open by them and
later claimed, without any released data, by Marinari, Parisi, and
Ritort~\cite{MPR95}; we are not aware of an earlier publicly released explicit odd-square
edge list with a dedicated verifier for the $n=8$ case, nor of an earlier
such certificate for the $n=6$ case. This is deliberately a limited
priority statement rather than a claim of exhaustive bibliographic priority:
we have not surveyed every literature and data release since 1979; for $n=6$,
for instance, OEIS A245762 lists $a(6)=132$ with an
\textsc{extensions} credit to M.~Scheucher and P.~Tabatabai (2015) for
supplying the term, while citing Harborth--Nienborg~\cite{HN94} for the result
itself; it links generator scripts, though not an odd-square edge list.
Note also that the \emph{values} $g(n)$ for $n\le6$, including $g(6)=132$, are
attributed to LNSW26 by~\cite{Emlek26}, so at $n=6$ what is offered
here is a certificate for a known value, not the value; and (iii) an exhaustive decision, in Section~\ref{sec:oddsquareverify},
of a question DPTV posed and left open in 1979 -- which of the
several $n=6$ optima described in their Table~I are actually realisable by a
spin configuration. MPR95 had already addressed this question by simulated
annealing, reporting that only the first is realisable and that ``the other''
seems not to be; we reduce it to a finite ($2^{32}$) search, decide it in
seconds, and thereby upgrade that tentative report to a proof covering
\emph{both} non-realisable distributions. Here the prior literature's
constructions are not machine-readable in the same sense: DPTV79 give
an explicit recursive scaling construction (their bond configurations
for $H_5,H_6,H_7$, described in prose and figures, not released as a
computer-readable edge list), while MPR95 report in a single sentence that they exhibited a $D=8$ ground
state matching the improved energy bound, with a method description (simulated
annealing) and the bound value ($-361$ on the $D{=}9$ comparison scale
they use for bound values across dimensions, not an absolute integer of
the $D{=}8$ model; see Section~\ref{sec:oddsquare}),
but no machine-checkable certificate: no spin configuration, no edge list,
and no graph-theoretic translation -- an existence claim we can neither
independently verify nor extend beyond matching its numerical value. The same holds at $n=6$, where DPTV79's own $D=6$
configuration is described in prose; we construct an independent
132-edge odd-square edge list instead, which happens to realise the
local-field distribution they report as their table's first solution.
Beyond matching that one, Section~\ref{sec:oddsquareverify} reports a
targeted computational search for the other two optimal $n=6$
distributions DPTV79's Table~I lists but do not know to be realisable;
extensive simulated annealing (\texttt{q6\_other\_distributions.py})
never reaches either, plateauing at the same nonzero distance from
target across every one of 40 and 20 deterministically specified RNG
seeds respectively (the consecutive integer seed lists are protocol labels,
not a claim of statistically independent random draws), while the same search
method reaches the realised distribution in every trial. This is computational evidence, not a
proof; Proposition~\ref{prop:q6realizability} now supplies one
(Section~\ref{sec:oddsquareverify}). The exact value
$\ex(Q_6,C_4)=132$ of Harborth and Nienborg~\cite{HN94} is therefore no
longer needed to bound $g(6)$ from above, though it remains the source
for the unrestricted value. This settles
the odd-square subclass completely for $n=6,7,8$ without resolving the
unrestricted problem $\ex(Q_n,C_4)$, since a general $C_4$-free subgraph
need not be odd-square.

An exhaustive audit of the $19\,866$ previously published $304$-edge $Q_7$
solutions shows that only $389$ of them lie in the odd-square subclass; the
remaining $19\,477$ do not. The odd-square construction therefore does
not subsume or fully explain the structural classification given here
(one profile type, Type~18 with 101 solutions, is entirely odd-square;
of the other 19, three (ranks 5, 7, and 10) are mixed, containing some
odd-square and some non-odd-square solutions, and the remaining 16
contain no odd-square solutions at all): the $19\,866$ solutions'
common structural core (degree sequence $\{4^{32},5^{96}\}$, spectral radius
in $[4.78543,4.79129]$, local maximality) and their classification into 20
dimension-profile types are independent results. These $19\,866$
labelled solutions certainly do \emph{not} exhaust all labelled
304-edge $C_4$-free subgraphs of $Q_7$: the first released solution
alone has a stabiliser of size $3$ in
$\mathrm{Aut}(Q_7)\cong(\mathbb{Z}_2)^7\rtimes S_7$ (order $645\,120$;
exhaustive check over all $645\,120$ elements), consisting of the
identity and two nontrivial elements, both coordinate permutation
$+$ XOR pairs. We write a permutation as the tuple $p$ with the meaning
``bit $i$ of $v$ becomes bit $p[i]$ of the image'', the convention of
\texttt{type18\_automorphisms.py}. (In symbols this is the \emph{scatter}
form $\mathrm{im}_{p[i]}=v_i$, equivalently the gather form
$\mathrm{im}_j=v_{p^{-1}[j]}$; the census and witness scripts implement
exactly this map, so a reader re-deriving the masks below should read the
tuple this way and not as ``image bit $i$ equals $v$'s bit $p[i]$''.) Under $v\mapsto \pi(v\oplus m)$ (XOR first,
then permute) the two elements are then $(5,1,2,3,4,6,0)$ with mask
$m=33=0100001_2$ and its square $(6,1,2,3,4,0,5)$ with $m=96=1100000_2$; under
$v\mapsto \pi(v)\oplus m$ the two masks are interchanged. The group itself,
and the fact that its order is $3$, are of course independent of both
conventions; only the labelling of the masks is not. They together
form a cyclic group of order $3$ -- so its orbit alone has size
$645\,120/3=215\,040>19\,866$. Decomposing the whole catalogue
(Proposition~\ref{prop:orbits}) makes this quantitative: the $19\,866$ fall
into $180$ orbits whose lengths sum to $34\,227\,200$, so the released files
are $0.058\%$ of the labelled solutions in those orbits alone. What remains
open is whether they represent every $\mathrm{Aut}(Q_7)$-orbit of 304-edge
$C_4$-free subgraphs, not whether they exhaust the labelled solutions
themselves; see Open Problem~\ref{op:orbits}.

For $Q_8$, the reconstructed 682-edge odd-square witness is locally maximal
in a strong sense: all 342 non-edges of $Q_8$ create at least 3 new $C_4$s
upon addition (compared with the earlier 680-edge simulated-annealing
Solution~B, for which 2 of 344 non-edges created only 1 or 2 new $C_4$s;
see Section~\ref{sec:q8} for both 680-edge solutions released). Every
$C_4$-free-construction bound reported here, and every one of the three
odd-square values, is witnessed by an explicit edge-list construction
with a machine-verifiable certificate. Two distinct $132$-edge $Q_6$
constructions are released and should not be conflated:
\texttt{q6\_edges\_132.jsonl} is $C_4$-free but not odd-square (its
square-intersection histogram is $\{1{:}8,\,2{:}44,\,3{:}188\}$) and
establishes $\ex(Q_6,C_4)\ge132$, while
\texttt{q6\_odd\_square\_132.json} is the odd-square witness for
$g(6)\ge132$ (histogram $\{1{:}30,\,3{:}210\}$). For every bound, a complete, deterministic enumeration of
all four-cycles (240 for $Q_6$, 672 for $Q_7$, 1\,792 for $Q_8$) establishes
$C_4$-freeness, and every \emph{construction} certificate reported here
(the edge sets themselves, their $C_4$-freeness, and the $Q_6$ and
$Q_8$ odd-square conditions) is reproducible by a dependency-free
verifier that is independent of all search and generation code
(\texttt{verify.py}; \emph{implementation-level} independence is supplied
separately by the bundled second implementation \texttt{cross\_verify.py}
described in Section~\ref{sec:guide}), with the $Q_7$
odd-square membership of individual solutions separately reproducible
via \texttt{audit\_q7\_odd\_square.py}. The further structural and
statistical claims reported below (spectral-radius ranges, pairwise
Hamming extrema, Type-18 automorphism counts, non-edge violation
distributions, and similar) are largely recomputable from the released
code: \texttt{analyze\_q7\_structure.py} reproduces the degree sequence,
the 20 direction profiles, the spectral-radius range, and the exhaustive
pairwise Hamming statistics (minimum $6$, maximum $274$, mean
$195.396108\ldots$, median $196$), and \texttt{type18\_automorphisms.py}
reproduces the Type-18 automorphism counts; \texttt{reproduce\_core.py}
drives the certificate and decision checks in one command, and
\texttt{reproduce\_core.py -{}-structural} additionally runs the $Q_7$
structural and Hamming analyses (which need \texttt{numpy} and several
minutes). The count of $636$ minimum-distance
pairs and their breakdown into $28$ profile-type combinations are produced by
\texttt{q7\_hamming\_tally.py}, which recovers all pairwise distances from a
single blocked matrix product (using
$|E\triangle E'|=2(304-|E\cap E'|)$) in a few seconds and optionally writes
the minimum-distance pairs to CSV.

The $Q_7$ solutions were found by a two-stage process that we have
since traced and partly re-verified (the two stages have different
evidentiary status, detailed below and in Section~\ref{sec:method}):
a conventional multi-move-type
penalty simulated annealing (add/remove/$1$-swap/kick moves with
temperature-scaled Boltzmann acceptance) found a single seed $304$-edge
solution, and a remove-and-repair collector -- starting from an
existing solution (optionally transformed by a random element of
$\mathrm{Aut}(Q_7)$), removing a few edges, then greedily re-adding
only violation-free edges -- mass-collected the released $19\,866$
solutions; we directly confirmed the collector stage by running it
ourselves: from the released seed, in one unseeded $30$-second run we
observed $1\,987$ new valid solutions. The script sets no random seed,
so this exact count is an observed throughput datum rather than a
bit-for-bit reproducibility target; separate reruns have shown the same
qualitative rapid collection without being used as additional numerical
evidence here. Tracing further
back, we located most of the prior history, with one correction
to an earlier draft of this paper: the
earliest recovered attempt (a CBC-solver ILP explicitly commented as
attacking Erd\H{o}s's \hbox{\$}100 problem) has a confirmed bug (its
$C_4$-detection finds $0$ of $672$ four-cycles, since adjacent
vertices in the bipartite $Q_7$ never share a common neighbour, which
its logic incorrectly relies on), so we withdraw our earlier claim
that this specific recovered file is what reached $289$ edges; a
recovered solver log, headed as a corrected version and reporting the
correct $C_4$ count, documents that a since-fixed version of the
script did reach $289$ edges, but we have not located that corrected
script itself. From there, file timestamps together with recovered solver
logs and a chat transcript document a progression $289\to293\to299\to301\to303\to304$ edges, the last
step apparently via the \texttt{sa\_search.py} series. We confirmed
the $301$-edge step's mechanism directly: \texttt{source.py}, the
HiGHS version, run for $60$ seconds with no prior solution to warm-start
from (a genuine cold start), produces a valid, growing $295$-edge
$C_4$-free solution via branch-and-bound alone. We further ran a
format/implementation-consistency check that any reader can
reproduce from the released
files alone, without needing our recovered archive: applying
\texttt{sa\_collect304.py}'s own canonicalisation-then-MD5 hash
function directly to each of the $19\,866$ released edge sets
reproduces the \texttt{hash} field already stored in that same
released record, for all $19\,866$ (Section~\ref{sec:method}); this
shows the released hash field and script are mutually consistent, not
that this script historically produced these edge sets, which remains
a separate, historical-record claim (recovered logs, chat transcript,
timestamps). The two released
$Q_8$ $680$-edge solutions (Solutions~A and~B, Section~\ref{sec:q8})
have likewise now been traced to recovered, independently-verified
working code, \texttt{sa\_q8.py}: a penalty-SA hill-climber that always
restarts each trial from the current best \emph{valid} (zero-violation)
solution rather than from a fresh random sample, progressively
improving it (initial greedy construction $\to$ intermediate saved
states at $584,662,\ldots,679$ edges $\to$ $680$). We confirmed the two
files this recovered code produced -- named \texttt{q8\_edges\_680.txt}
and \texttt{q8\_best.json} in the recovered archive, released here as
\texttt{q8\_solution\_a.txt} and \texttt{q8\_solution\_b.json} -- match Solutions~A and~B respectively
exactly by edge set, and we confirmed the code makes genuine
progress on direct execution: run once under an external $90$-second
timeout from a saved
$670$-edge intermediate state, it reduced the violation count at the
$675$-edge target from $22$ to $8$. \texttt{sa\_q8.py} sets no random
seed, so this specific numeric trajectory is a one-time observation,
not a fixed-seed reproducible result: a reader rerunning the released
files should expect qualitatively similar monotonic improvement, not
these exact numbers. Separate unseeded reruns from the same checkpoint
also showed qualitative improvement, but their concrete counts are omitted
here because no corresponding logs or fixed seeds are released. This is a different script,
released separately from \texttt{c4free\_sa.py}, whose
generalisation of the same two-phase idea to a single $n$-agnostic
script (Section~\ref{sec:method}) does \emph{not} share this
always-restart-from-a-valid-state property, and which we found cannot
progress from a fresh random start under its own documented parameters
for either $Q_7$ or $Q_8$; \texttt{c4free\_sa.py} is retained as
released (with its defect documented) but is no longer the operative
account of how either the $Q_7$ or the $Q_8$ solutions were produced.
For $Q_6$ we additionally give an ILP formulation whose
feasible value matches the known exact value $\ex(Q_6,C_4)=132$; we did
not independently verify the ILP's optimality within a practical
runtime. The upper bound itself rests on the combinatorial proof
of Harborth and Nienborg~\cite{HN94}; independently, we reproduce the
matching lower-bound construction.

Edge lists, ILP files, the odd-square reconstruction and audit scripts, and
source code are made available at\\
\url{https://github.com/minamominamoto/c4free-hypercube}
. The repository is synchronised with each released revision; readers should
check the commit corresponding to this version, since files introduced by
a revision appear only from that commit onward -- here, among others:
\texttt{q6\_decide\_realizability.py},
\texttt{solve\_field\_ip.py},
and the $Q_8$ second-distribution witnesses. The authoritative file list for this
revision, with SHA-256 sums, is given by the three manifests in the
accompanying archive:
\path{ORIGINAL_DATA_SHA256SUMS.txt},
\path{ODDSQUARE_BRIDGE_SHA256SUMS.txt}, and
\path{MANUSCRIPT_SHA256SUMS.txt};
the last covers this document's own \texttt{.tex} and \texttt{.pdf} so
that a reader can confirm which revision a given file set belongs to.


\section{\texorpdfstring{$C_4$}{C4} Enumeration and Verification}\label{sec:verify}

\begin{lemma}[$C_4$ enumeration in $Q_n$]\label{lem:c4}
Every four-cycle of $Q_n$ is uniquely determined by a pair of coordinate
directions $0\le i<j\le n-1$ and a base vertex $b\in\{0,1\}^n$ with
$b_i=b_j=0$.  Its four vertices are
\[
  b,\quad b\oplus e_i,\quad b\oplus e_j,\quad b\oplus e_i\oplus e_j,
\]
and the total number of four-cycles in $Q_n$ is $\binom{n}{2}\cdot2^{n-2}$:
there are $\binom{n}{2}$ choices of the two varying coordinates and, after
forcing those two base bits to $0$, exactly $2^{n-2}$ choices for the remaining
base bits. For $n=6$: $240$; for $n=7$: $672$; for $n=8$: $1\,792$.
\end{lemma}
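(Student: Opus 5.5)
We show that every $4$-cycle of $Q_n$ has exactly two coordinate directions, each used twice, and that it spans a $2$-dimensional subcube. The bijection and the count then follow directly.

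\emph{Step 1: structure of a $4$-cycle.} Let $v_0v_1v_2v_3v_0$ be a $4$-cycle. Each edge $v_kv_{k+1}$ flips one coordinate, so write $v_{k+1}=v_k\oplus e_{d_k}$. Going round the cycle returns to $v_0$, hence $e_{d_0}\oplus e_{d_1}\oplus e_{d_2}\oplus e_{d_3}=0$. So every direction occurs an even number of times among $d_0,\dots,d_3$.

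If only one direction $i$ occurred, then $v_2=v_0\oplus e_i\oplus e_i=v_0$. This contradicts the four vertices being distinct. Hence exactly two directions $i\ne j$ occur, each twice.

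Consecutive edges cannot share a direction, since that would again force $v_{k+2}=v_k$. So the direction sequence is $i,j,i,j$ up to rotation and the order of $i,j$. The vertex set is therefore $\{v_0,\,v_0\oplus e_i,\,v_0\oplus e_j,\,v_0\oplus e_i\oplus e_j\}$, the subcube obtained by fixing all coordinates outside $\{i,j\}$. Conversely, any such four vertices induce exactly one $4$-cycle in $Q_n$: the induced subgraph is itself a $4$-cycle, with each vertex adjacent to exactly the two vertices differing from it in $i$ or in $j$.

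\emph{Step 2: the bijection.} Send the cycle to the pair $\{i,j\}$, ordered so that $i<j$. Also send it to the unique vertex $b$ of the cycle with $b_i=b_j=0$; this is $v_0$ with bits $i$ and $j$ cleared. The directions are intrinsic to the cycle, as they are the coordinates that vary on its vertex set, and $b$ is determined by the cycle. Conversely $(i,j,b)$ recovers the vertex set, which by Step 1 determines the cycle. So the map is a bijection onto the pairs $(\{i,j\},b)$ with $b_i=b_j=0$.

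\emph{Step 3: counting.} There are $\binom n2$ choices of $\{i,j\}$. For each, there are $2^{n-2}$ choices of $b$, because the bits at $i$ and $j$ are forced to $0$ and the other $n-2$ bits are free. This gives $\binom n2 2^{n-2}$ four-cycles, and in particular
\[
15\cdot16=240,\qquad 21\cdot32=672,\qquad 28\cdot64=1792
\]
for $n=6,7,8$.

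The only step needing care is in Step 1: excluding a single repeated direction and ruling out direction sequences like $i,i,j,j$. Both are settled by the distinctness of the four vertices of a cycle.
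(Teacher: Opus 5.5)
Your proof is correct and follows the same route as the paper. You identify each four-cycle with its pair of varying directions $\{i,j\}$ together with the base vertex that has bits $i,j$ cleared, which gives $\binom n2\cdot2^{n-2}$. The paper asserts without proof that every four-cycle is such a $2$-dimensional face, and your Step~1 supplies that justification: the four direction vectors XOR to zero, and distinctness of the vertices forces the alternating pattern $i,j,i,j$.
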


\noindent\textbf{Verification algorithm.}\quad
Given a candidate edge set $E\subseteq E(Q_n)$ stored as a hash set
(of unordered pairs $\{u,v\}$, so $uv$ and $vu$ are the same element),
the following procedure performs a complete, deterministic $C_4$-freeness check:
\begin{quote}\small
\textbf{for each} pair $(i,j)$ with $0\le i<j\le n-1$:\\
\quad\textbf{for each} $b\in\{0,1\}^n$ with $b_i=b_j=0$:\\
\quad\quad let $v_1=b,\;v_2=b\oplus e_i,\;v_3=b\oplus e_j,\;v_4=b\oplus e_i\oplus e_j$\\
\quad\quad \textbf{if} $\{v_1v_2,\,v_1v_3,\,v_2v_4,\,v_3v_4\}\subseteq E$:
  \textbf{return} \textsc{violation}\\
\textbf{return} \textsc{c4-free}
\end{quote}
We applied this algorithm to all constructions and all $19\,866$ solutions
of $Q_7$; every call returned \textsc{c4-free}.

\section{The \texorpdfstring{$Q_7$}{Q7} Construction}\label{sec:q7}

We constructed a $C_4$-free subgraph of $Q_7$ on 304 edges by
penalty-based simulated annealing (Section~\ref{sec:method}, where the
production code has since been recovered and verified),
then certified it by Lemma~\ref{lem:c4}'s algorithm. This construction
is released as \texttt{selected\_edges\_best.json}, and its edge set is
identical to global index $0$ of the $19\,866$-solution catalogue
(first line of \texttt{q7\_edges\_304.jsonl.part1}) --
so every property below is directly checkable from either file.

\begin{proposition}\label{prop:q7}
The specific $304$-edge seed construction just described (the first
solution found, not a generic member of the later $19\,866$-solution
ensemble of Section~\ref{sec:structure}) has the following properties:
\begin{enumerate}[label=(\roman*)]
  \item Degree sequence: 32 vertices of degree 4 and 96 of degree 5
    (degree sum $608=2\cdot304$).
  \item Spectral radius $\lambda_1\approx4.787$ (a genuine computed
    quantity for this construction); $\lambda_{\min}\approx-4.787$, i.e.\
    $\lambda_1+\lambda_{\min}=0$ to machine precision, which is automatic
    for any subgraph of the bipartite $Q_7$ and serves here only as a
    sanity check on the eigenvalue computation, not as an independent
    structural finding.
  \item $\mathrm{Tr}(A^4)=5216$, attaining the $C_4$-free trace equality
    $\sum_i\deg(i)^2+\sum_{uv\in E}(\deg u+\deg v)-2|E|$. Like the item above
    this is forced by the degree sequence rather than independent of it: the
    two summands are equal because
    $\sum_{uv\in E}(\deg u+\deg v)=\sum_v\deg(v)^2$ (each vertex's degree
    is counted once per incident edge), so the expression is
    $2\sum_v\deg(v)^2-2|E|$; the \emph{equality} of $\mathrm{Tr}(A^4)$
    with it is where $C_4$-freeness enters --- every off-diagonal entry of
    $A^2$ is then $0$ or $1$, so
    $\sum_{i\ne j}(A^2)_{ij}^2=\sum_{i\ne j}(A^2)_{ij}
    =\sum_v\deg(v)(\deg(v)-1)$. See
    Proposition~\ref{prop:rigid}(iii).
  \item Local maximality: every non-edge of $Q_7$ creates a $C_4$ when added.
\end{enumerate}
\end{proposition}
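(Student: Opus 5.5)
Since the proposition is about one explicit $304$-edge edge set, the proof I would give is a direct computation on the released file \texttt{selected\_edges\_best.json} (equivalently line $0$ of \texttt{q7\_edges\_304.jsonl.part1}), plus the short algebra for (iii). First I load the $304$ edges and confirm that each is an edge of $Q_7$, i.e.\ its endpoints differ in exactly one bit. Then I run the verification algorithm of Section~\ref{sec:verify} over all $672$ squares of Lemma~\ref{lem:c4} to confirm $C_4$-freeness, which items (iii) and (iv) both use.

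For (i), I tally degrees over the $128$ vertices. I check that the histogram is $\{4^{32},5^{96}\}$ and that the degree sum is $608=2\cdot304$.

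For (ii), I form the $128\times128$ adjacency matrix and compute its extreme eigenvalues in double precision. The spectrum of a bipartite graph is symmetric about $0$: conjugating $A$ by the diagonal $\pm1$ matrix indexed by the parity of the vertex weight sends $A$ to $-A$. So $\lambda_{\min}=-\lambda_1$ holds automatically, and I use the computed pair only as a numerical sanity check. The value $\lambda_1\approx4.787$ is genuinely computed. To make it rigorous rather than floating-point, I would bracket it. For the lower bound, take the Rayleigh quotient $x^{T}Ax/x^{T}x$ of a rational approximation $x$ to the Perron vector. For the upper bound, use the Collatz--Wielandt bound $\lambda_1\le\max_v (Ax)_v/x_v$ with the same positive $x$. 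Both are finite rational computations.

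For (iii), I argue as follows. $\operatorname{Tr}(A^4)=\sum_{i,j}(A^2)_{ij}^2$, and the diagonal entries $(A^2)_{ii}=\deg(i)$ contribute $\sum_v\deg(v)^2$. Each off-diagonal entry $(A^2)_{ij}$ counts the common neighbours of $i$ and $j$. Under $C_4$-freeness this count is at most $1$, so squaring it changes nothing. Hence the off-diagonal part equals the number of ordered paths of length $2$, namely $\sum_v\deg(v)(\deg v-1)$. This gives $\operatorname{Tr}(A^4)=2\sum_v\deg(v)^2-2|E|$. Rewriting $\sum_{uv\in E}(\deg u+\deg v)=\sum_v\deg(v)^2$ puts it in the stated form. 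Plugging in (i): $\sum\deg^2=32\cdot16+96\cdot25=2912$, so the trace is $5824-608=5216$.

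For (iv), I take each of the $448-304=144$ non-edges $uv$ of $Q_7$. For each one I run through the $6$ squares of Lemma~\ref{lem:c4} containing it, namely one per other direction $j$, and check whether the other three edges of that square all lie in $E$; I record at least one such square as a witness. The only real obstacle is making (ii) a proof rather than a floating-point report, and the rational bracketing above handles that. Everything else is a finite check carried out by the released verifiers.
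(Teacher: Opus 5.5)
Your proposal is correct and follows the paper's approach. The paper establishes the proposition by direct computation on the released seed edge set (global index $0$ of the catalogue, checked by \texttt{verify.py} and \texttt{analyze\_q7\_structure.py}), and derives item (iii) exactly as you do, from $(A^2)_{ij}\in\{0,1\}$ off the diagonal under $C_4$-freeness, giving $2\cdot2912-608=5216$. Your Rayleigh-quotient/Collatz--Wielandt bracketing of $\lambda_1$ goes beyond the paper, which only reports the floating-point value $\lambda_1\approx4.787$; it is a valid optional strengthening.
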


Across the collector runs described in Section~\ref{sec:method} (whose
\texttt{trial} field reaches into the millions per run; $19\,866$ is
the number of \emph{stored solutions}, not the number of trials),
we obtained $19\,866$ $C_4$-free subgraphs of $Q_7$ on 304 edges with
pairwise distinct edge sets (not quotiented by $\mathrm{Aut}(Q_7)$; this
catalogue-level distinctness is asserted mechanically by \texttt{verify.py},
which hashes each sorted edge list and requires all $19\,866$ digests to
differ -- a separate check from its per-solution duplicate-edge test),
all sharing the structural properties stated in Proposition~\ref{prop:rigid}.

\section{The \texorpdfstring{$Q_8$}{Q8} Construction}\label{sec:q8}

This section documents the originally announced $Q_8$ witness, found by
a penalty-SA hill-climber distinct from the $Q_7$ construction's
method (see Section~\ref{sec:method} for the recovered and verified
\texttt{sa\_q8.py} production code). It has since been
superseded as the paper's headline lower bound by the 682-edge odd-square
reconstruction of Section~\ref{sec:oddsquare}; it is retained here as an
independent structural data point.

Simulated annealing produced the two $C_4$-free subgraphs of $Q_8$ on 680
edges (both released in \texttt{q8\_edges\_680.jsonl}), each certified by
exhaustive enumeration of all $1\,792$ four-cycles. We refer to them as
Solution~A (first line of the file) and Solution~B (second line); the
two are structurally distinct and are kept separate below, since an
earlier draft of this section inadvertently described properties of
Solution~A and Solution~B as if they belonged to a single construction.

\begin{proposition}\label{prop:q8struct}
Both 680-edge solutions are $C_4$-free and locally maximal (every
non-edge of $Q_8$ creates at least one $C_4$ upon addition -- that is,
maximal with respect to edge addition; throughout this paper ``locally
maximal'' means exactly this and implies no stronger optimality
notion), with edge
density $680/1024=66.4\%$ (compared with $304/448=67.9\%$ for the $Q_7$
construction) and $\lambda_1+\lambda_{\min}=0$ (automatic for any subgraph of
the bipartite $Q_8$; not by itself a distinguishing structural feature).
They differ as follows:
\begin{enumerate}[label=(\roman*)]
  \item Solution~A: degree sequence $\{4^8,\,5^{160},\,6^{88}\}$
    (degree sum $1360=2\cdot680$); square-intersection histogram
    $\{1\!:\!308,\,3\!:\!1484\}$ over the $1\,792$ squares (incidentally
    odd-square, though not constructed as such -- see below for the
    implication of this); non-edge violation
    distribution $\{2\!:\!3,\,3\!:\!48,\,4\!:\!144,\,5\!:\!136,\,6\!:\!13\}$
    (minimum $2$ new $C_4$s per non-edge).
  \item Solution~B: degree sequence $\{4^7,\,5^{162},\,6^{87}\}$
    (degree sum $1360=2\cdot680$); square-intersection histogram
    $\{1\!:\!302,\,2\!:\!12,\,3\!:\!1478\}$ (not odd-square: $12$ squares
    meet it in exactly $2$ edges); non-edge violation distribution
    $\{1\!:\!1,\,2\!:\!1,\,3\!:\!49,\,4\!:\!153,\,5\!:\!124,\,6\!:\!16\}$
    (minimum $1$ new $C_4$, attained by exactly one non-edge).
\end{enumerate}
\end{proposition}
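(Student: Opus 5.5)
This is a finite, computational statement about two explicit edge sets, so I will prove it by direct deterministic computation from the released file \texttt{q8\_edges\_680.jsonl}, with Lemma~\ref{lem:c4} as the organising tool. I first parse each line into a set of unordered pairs. I check that each pair is an edge of $Q_8$ (the endpoints differ in exactly one bit), that there are no duplicates, and that there are exactly $680$ pairs. I then compute the degree of each of the $256$ vertices and tabulate the multiset, confirming $\{4^8,5^{160},6^{88}\}$ for A and $\{4^7,5^{162},6^{87}\}$ for B. The degree sums $8\cdot4+160\cdot5+88\cdot6=1360$ and $7\cdot4+162\cdot5+87\cdot6=1360$ are a hand-checkable consistency test. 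The density $680/1024$ is arithmetic.

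Next I run over all $1\,792$ squares of Lemma~\ref{lem:c4}, indexed by $(i,j,b)$, and record for each square the number $k\in\{0,\dots,4\}$ of its four edges lying in the solution. This single pass proves several claims at once. First, $C_4$-freeness is exactly the statement that no square has $k=4$. Second, it yields the square-intersection histograms: for A this is $\{1{:}308,3{:}1484\}$, so A is odd-square. For B it is $\{1{:}302,2{:}12,3{:}1478\}$, so B is not odd-square. As a consistency check, each edge of $Q_8$ lies in exactly $7$ squares, so $\sum_k k\cdot(\text{count}_k)=7\cdot680=4760$. Indeed $308+3\cdot1484=4760$ and $302+24+3\cdot1478=4760$, which I will note to show the histograms are forced to be coherent with the edge count.

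For local maximality and the non-edge violation distributions, I use the same square table. Adding a non-edge $e$ creates one new $C_4$ for each of the $7$ squares containing $e$ in which the other three edges are already present, i.e.\ squares through $e$ with $k=3$ before the addition. Squares already containing four edges do not exist, by $C_4$-freeness. So for each of the $1024-680=344$ non-edges I count the squares through it with $k=3$. Local maximality means every count is at least $1$, and the tallies give the stated distributions; their totals $3+48+144+136+13=344$ and $1+1+49+153+124+16=344$ confirm that every non-edge is covered. A further check is available: summing the count over non-edges gives the number of $k=3$ squares, each counted once (each such square has exactly one missing edge). This gives $1484$ for A and $1478$ for B. Both match the histograms exactly, since $2\cdot3+3\cdot48+4\cdot144+5\cdot136+6\cdot13=1484$, and likewise for B.

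Finally, $\lambda_1+\lambda_{\min}=0$ needs no computation: any subgraph of the bipartite $Q_8$ is bipartite, so its spectrum is symmetric about $0$. The only real obstacle is trust in the data and the code rather than any mathematics. I would therefore have the independent verifiers (\texttt{verify.py} and \texttt{cross\_verify.py}) reproduce every tally, and lean on the hand-checkable identities above as internal certificates that the reported numbers are mutually consistent.
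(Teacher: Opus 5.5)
Your proposal is correct and is essentially the paper's own route: the proposition is a finite computational claim about the two released edge lists. It is settled by exhaustively enumerating all $1\,792$ squares of Lemma~\ref{lem:c4}, which at once yields $C_4$-freeness, the square histograms and, via the $k=3$ squares through each of the $344$ non-edges, local maximality and the violation distributions, while $\lambda_1+\lambda_{\min}=0$ follows from bipartiteness. Your double-counting checks are all arithmetically right: $\sum_k k\,N_k=7\cdot680=4760$, and the non-edge tallies sum to $N_3$, i.e.\ $1484$ for A and $1478$ for B. The first of these is the same double count the paper records in Proposition~\ref{prop:q6oddsquare}(ii).
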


Solution~A's odd-squareness has a notable implication for the timeline
of results in this paper: since it was originally announced (before
the $682$-edge odd-square reconstruction of
Section~\ref{sec:oddsquare} existed) and is itself odd-square, the
bound $g(8)\ge680$ was already achieved within the odd-square subclass
at that earlier point, not only via the later, deliberately-constructed
$682$-edge witness. We did not notice or exploit this at the time; it
is recorded here as a fact about the released data, confirmed
independently by direct square-by-square enumeration.

\begin{remark}
Brass--Harborth--Nienborg's tight bound $\tfrac12(n+\sqrt n)\cdot2^{n-1}$
applies only for $n=4^r$, and their weaker bound
$\tfrac12(n+0.9\sqrt n)\cdot2^{n-1}$ is stated only for $n\ge9$
(Eq.~\eqref{eq:bounds},~\cite{BHN95}); neither covers $n=8$. Evaluating
the $n\ge9$ form at $n=8$ nonetheless gives $\approx674.9$; both
680-edge constructions and the 682-edge odd-square reconstruction of
Section~\ref{sec:oddsquare} exceed this extrapolated figure (by 5.1 and
7.1 edges respectively), but we do not claim this as a comparison
against a proven bound.
\end{remark}

\subsection{Local optimality of the 680-edge constructions}

Solution~B is locally maximal in the weakest possible sense among the
two: among its $344$ non-edges, exactly $1$ creates exactly $1$ new
$C_4$; exactly $1$ creates exactly $2$; the remaining $342$ create $3$
or more (Proposition~\ref{prop:q8struct}(ii)). Solution~A's non-edges
all create at least $2$ new $C_4$s.

For historical completeness only, not as a reviewable computational
result: the manuscript that originally announced the $680$-edge
construction also reported a failed search for $681$ edges. No seed list or
run log survives for that negative-search statistic, and the originally
associated conjecture $\ex(Q_8,C_4)=680$ is false in view of the 682-edge
witness. We therefore omit the old numerical trial count from the evidentiary
record and retain only the fact that an unarchived failed search was reported,
which we mention only as
development history, not as evidence bearing on any current claim in
this paper.

Table~\ref{tab:compare} compares the $Q_7$ construction with Solution~B
(the 680-edge solution whose local-optimality numbers are discussed
above); see Table~\ref{tab:oddsquare} in Section~\ref{sec:oddsquare} for
the odd-square reconstructions.

\begin{table}[ht]
\centering
\caption{Structural comparison of the $Q_7$ construction and $Q_8$
  Solution~B (see Proposition~\ref{prop:q8struct} for Solution~A).}
\label{tab:compare}\medskip
\begin{tabular}{lcc}
\toprule
Parameter & $Q_7$ (304 edges) & $Q_8$ Solution~B (680 edges)\\
\midrule
Total edges $|E(Q_n)|$ & 448 & 1024\\
Achieved edges & 304 & 680\\
Edge density & 67.9\% & 66.4\%\\
Min degree & 4 & 4\\
Max degree & 5 & 6\\
Degree sequence & $\{4^{32},5^{96}\}$ & $\{4^7,5^{162},6^{87}\}$\\
BHN (Eq.~\ref{eq:bounds}), extrapolated & $\approx300.2$ & $\approx674.9$\\
\bottomrule
\end{tabular}
\end{table}

\section{The Odd-Square Subclass}\label{sec:oddsquare}

\subsection{Definition and equivalence to fully frustrated hypercubes}

\begin{definition}
An edge set $E\subseteq E(Q_n)$ is \emph{odd-square} if every square
(four-cycle) of $Q_n$ meets $E$ in an odd number of edges, i.e.\ one or
three of its four edges lie in $E$. Write
$g(n)=\max\{|E|:E\text{ is odd-square in }Q_n\}$.
\end{definition}

Every odd-square edge set is $C_4$-free, since no square can meet it in all
four edges; hence $g(n)\le\ex(Q_n,C_4)$. The odd-square condition is exactly
the condition studied under a different name in the statistical-physics
literature on \emph{fully frustrated hypercubes}: assign to each edge $e$ a
sign $\sigma(e)=+1$ if $e\in E$ and $\sigma(e)=-1$ otherwise; the square
condition becomes ``the product of signs around every square is $-1$'', i.e.\
every square is frustrated.

\begin{lemma}\label{lem:switching}
Let $\sigma_1,\sigma_2:E(Q_n)\to\{\pm1\}$ both have product $-1$ on every
square. Then there is a function $t:\{0,1\}^n\to\{\pm1\}$ with
$\sigma_2(x,y)=\sigma_1(x,y)\,t(x)t(y)$ for every edge $(x,y)$.
\end{lemma}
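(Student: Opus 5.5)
The plan is to reduce the statement to the triviality of a cohomology class. Set $\tau=\sigma_1\sigma_2$ (pointwise product on edges). On every square the product of $\tau$ is $(-1)(-1)=+1$, so $\tau$ is a $\pm1$ edge labelling that is balanced on every square. The lemma is then equivalent to showing that such a $\tau$ is a coboundary: there is $t$ with $\tau(x,y)=t(x)t(y)$. Once we have that, $\sigma_2=\sigma_1\tau$ gives the claim, since $\sigma_1^2=1$.

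To build $t$, fix the base vertex $0$ and put $t(0)=1$. For each vertex $v$, choose any path $P$ in $Q_n$ from $0$ to $v$ and set $t(v)=\prod_{e\in P}\tau(e)$. It follows immediately that $\tau(x,y)=t(x)t(y)$ on every edge: append the edge $xy$ to a path from $0$ to $x$ to obtain a path to $y$. The only point to check is well-definedness. Two paths to the same vertex form a closed walk, so we need $\prod_{e\in W}\tau(e)=1$ for every closed walk $W$ in $Q_n$.

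This well-definedness is the main step. The approach is to show that the cycle space of $Q_n$ over $\mathbb{F}_2$ is generated by the squares of Lemma~\ref{lem:c4}. Because $\tau$ takes values in $\{\pm1\}$, the product over a closed walk depends only on the mod-$2$ edge multiset of the walk, which is an element of the cycle space. The product is also multiplicative under symmetric difference. So square-balance implies balance on every cycle.

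For the generation claim, use a direct argument. Consider a closed walk written as a word in the directions $e_{i_1},\dots,e_{i_k}$ starting at some vertex. Closure means each direction appears an even number of times. Swapping two adjacent letters $e_ie_j$ to $e_je_i$ changes the walk by exactly one square, and cancelling an adjacent pair $e_ie_i$ removes a backtrack that contributes $\tau(e)^2=1$. By repeated adjacent swaps, sort the word so that equal letters become adjacent, then cancel them all. This reduces any closed walk to the empty walk, with each step multiplying the product by a square's value $+1$ or by $1$. Alternatively, one could count: the cycle space has dimension $n2^{n-1}-2^n+1$, and one would show that the squares span it by induction on $n$ through the decomposition $Q_n=Q_{n-1}\times K_2$. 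The word-rewriting argument is shorter, though, and I would use it.
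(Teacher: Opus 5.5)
Your argument is correct, and its skeleton matches the paper's: pass to $\tau=\sigma_1\sigma_2$, show that $\tau$ has product $+1$ around every cycle, and conclude that $\tau$ is a coboundary. The difference is that the paper imports both middle steps, while you prove both inline. The paper takes square generation of the cycle space from the theorem that partial cubes have cycle bases of convex cycles, backed by a $\mathrm{GF}(2)$ rank computation for $n=3,\dots,8$. It takes the coboundary conclusion from Harary's balance theorem.

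Your path-product definition of $t$ is the standard proof of the balance theorem. Your word-rewriting argument handles every $n$ uniformly with no external input: an adjacent swap $e_ie_j\to e_je_i$ with $i\ne j$ multiplies the walk's product by one square's value, a backtrack $e_ie_i$ contributes $\tau(e)^2=1$, and even multiplicities let you sort and cancel everything. It also makes your cycle-space paragraph redundant, because it shows directly that every closed walk has product $+1$. In fact it shows that every closed walk reduces to the trivial one by square moves and backtrack cancellations. That is a homotopy-level statement, stronger than the $\mathrm{GF}(2)$ one the paper uses, though for $\{\pm1\}$-valued labels the latter suffices.

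Your route buys self-containment and a proof that is visibly valid for all $n$. The paper's buys brevity and places the lemma in the general framework of partial cubes and signed-graph balance.

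One wording fix: define $t(v)$ via an arbitrary walk from $0$ to $v$ rather than a path. Appending the edge $xy$ to a path ending at $x$ can revisit $y$. Your closed-walk argument already covers walks, so nothing else changes.
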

\begin{proof}
Let $\tau=\sigma_1\sigma_2$ (pointwise product). On every square $C$,
\[
\prod_{e\in C}\tau(e)
=\Bigl(\prod_{e\in C}\sigma_1(e)\Bigr)\Bigl(\prod_{e\in C}\sigma_2(e)\Bigr)
=(-1)(-1)=+1.
\]
The map sending a cycle $C$ to $\prod_{e\in C}\tau(e)\in\{\pm1\}$ is a group
homomorphism from the cycle space of $Q_n$ (over $\mathrm{GF}(2)$, under
symmetric difference) to $\{\pm1\}$: an edge lying in both cycles
disappears from their symmetric difference and contributes
$\tau(e)^2=+1$ to the product of their images, so additive cancellation
in $\mathrm{GF}(2)$ matches multiplicative cancellation in $\{\pm1\}$
--- which is exactly the homomorphism property. The
squares of $Q_n$ generate its cycle space (a standard fact for hypercubes:
every cycle is a symmetric difference of squares -- for general $n$ this
follows, e.g., from the theorem that every partial cube has a cycle basis
consisting of convex cycles~\cite{HLS14}, the convex cycles of $Q_n$ being
exactly its squares; the bundled
deterministic script \texttt{cycle\_space\_rank.py} verifies this by
direct $\mathrm{GF}(2)$ rank computation for every $n$ used in this
paper, asserting that the rank of the square incidence vectors equals
the cycle-space dimension $E-V+1=(n-2)2^{n-1}+1$ for each of
$n=3,\ldots,8$ -- the six dimensions being $5$, $17$, $49$, $129$,
$321$ and $769$), so this homomorphism is
determined by its values on the squares; since it is $+1$ on every square,
it is $+1$ on every cycle. A signature with product $+1$ on every cycle of
a connected graph is a \emph{coboundary}: $\tau(x,y)=t(x)t(y)$ for some
$t:V\to\{\pm1\}$ (the classical balance theorem for signed graphs,
Harary~\cite{Har53}). Solving for $\sigma_2$ gives the claim.
\end{proof}

Lemma~\ref{lem:switching} says any two odd-square signatures are related by
a \emph{vertex switching} $t$. For a \emph{fixed} coupling $J$, letting the
spin configuration $s:V(Q_n)\to\{\pm1\}$ range over all $2^{2^n}$ choices
(a sign at each of the $|V(Q_n)|=2^n$ vertices, hence $2^{|V|}=2^{2^n}$ assignments) produces
exactly the
signatures $\sigma_s(x,y)=J(x,y)s(x)s(y)$ in the switching class of $J$
(writing $s=t\cdot s_0$ for a fixed reference $s_0$ realises every
switching $t$); this is the operational fact we use, not a claim that
different $s$ give the same energy under fixed $J$ (they generally do
not -- that variation is exactly what is being optimised). The
energy-preserving gauge transformation in the physical sense instead
acts on \emph{both} $J$ and $s$ together, $J(x,y)\mapsto J(x,y)t(x)t(y)$
and $s(x)\mapsto t(x)s(x)$, under which $J(x,y)s(x)s(y)$ is unchanged
edge-by-edge~\cite{MPR95}.
Consequently, $g(n)$ — defined as a maximum over \emph{all} odd-square
edge sets — equals the maximum of $|E|$ over the switching class of any
one fixed fully-frustrated signature, in particular the reference coupling
$J$ used in Section~\ref{sec:oddsquareverify}'s witness; optimising that
one fixed $J$ (or, for $D=8$, exhibiting a ground state attaining
Derrida et al.'s improved energy lower bound for it, as
Marinari--Parisi--Ritort report~\cite{MPR95}) therefore does optimise
over the class defining $g(n)$, not merely over a possibly-proper
subclass of it. (The inclusion that makes this an equality of classes is
Lemma~\ref{lem:switching} read in the other direction: the fixed $J$'s
negative edge set is itself odd-square, so by the lemma \emph{every}
odd-square edge set is switching-equivalent to it, and hence lies in
that single switching class.)

Under the further correspondence between signed
hypercubes and Ising spin configurations $s:\{0,1\}^n\to\{\pm1\}$ via
$\sigma(x,y)=J(x,y)\,s(x)s(y)$ for a fixed reference coupling $J$ with every
square frustrated, maximising $|E|$ (i.e.\ maximising the number of
\emph{positive} edges, equivalently minimising the number of negative
edges) is exactly the problem of minimising the ground-state energy of the
fully frustrated Ising hypercube studied by Derrida, Pomeau, Toulouse, and
Vannimenus~\cite{DPTV79} and, for $D=8$, by Marinari, Parisi, and
Ritort~\cite{MPR95}: the ground-state energy is (up to an affine rescaling)
the number of negative edges, $|E(Q_n)|-|E|$. In the language of signed
graphs, $|E(Q_n)|-g(n)$ is the \emph{frustration index} of the fully
frustrated signature on $Q_n$. Laplante, Naserasr, Sunny, and
Wang~\cite{LNSW26} study this frustration index directly (in the
context of Huang's proof of the sensitivity conjecture, and as a target they
describe as not previously treated in the literature to the best of their
knowledge) and explicitly
establish a connection between it and Erd\H{o}s's question on the
largest $C_4$-free subgraph of the hypercube, giving lower and upper
bounds on the frustration index of the fully frustrated signature that
match when $n$ is a power of $4$; our $g(6),g(7),g(8)$ results above
sharpen this connection to exact values at $n=6,7,8$ specifically. To
quantify that sharpening: our exact frustration indices are $60,144,342$
at $n=6,7,8$ (i.e.\ $|E(Q_n)|-g(n)$). A baseline lower bound LNSW26
state as Theorem~5.2 (which they attribute to an implication of Bialostocki's
work~\cite{Bia83}) --- and which is, in our notation, exactly the Cauchy--Schwarz relaxation
$S\le\sqrt{Q\,m}=\sqrt n\,2^{n-1}$ of Lemma~\ref{lem:fieldbound}, already
recorded by DPTV79 (p.~620) as giving ``a first lower bound for the ground
state energy'' ---
$F(H_n,\sigma^*)\ge(n-\sqrt n)\cdot2^{n-2}$, which evaluates to
$\ge57,\ge140,\ge331$ at $n=6,7,8$ respectively -- looser than our exact
values by $3,4,11$. LNSW26's own contribution beyond this baseline is
a matching upper bound on the frustration index (equivalently, a lower
bound on $g(n)$) via a construction that coincides with
Brass--Harborth--Nienborg's~\cite{BHN95} when $n$ is a power of~$4$;
for $n=6,7,8$ (not powers of~$4$) this upper bound is
strictly looser than the lower bound, so LNSW26's bounds
do not determine $g(n)$ at these values. The sharpening therefore applies at $n=7,8$ but \emph{not} at $n=6$:
the 20th Emléktábla Workshop problem collection~\cite{Emlek26} credits
LNSW26 with the exact values $g(n)=1,3,9,24,56,132$ for $n\le6$. (During
source verification we obtained and read the LNSW26 preprint in full, and,
on 26~August~2026, the published version's full text (ACM
Trans.\ Comput.\ Theory \textbf{18}(1), Art.~6); the value list appears in
neither, so this attribution rests on \cite{Emlek26} alone; nor do
LNSW26's own published theorems determine $g(6)$, since their upper and
lower bounds close only at powers of $4$.) The attribution is therefore
secondary-source only: \emph{if} it is right (say via a version or
communication we have not seen), the record of $g(6)=132$ predates us in
LNSW26; in any case we do not claim the value as new here, and its
historical \emph{attainment} is DPTV79's: their $D=6$ ground state has $60$ negative bonds out of $192$
(p.~622), i.e.\ $132$ positive edges, and MPR95 note that constructions
saturating the improved bound exist for $D\le7$. What
Lemma~\ref{lem:quant} adds at $n=6$ is a short self-contained derivation of a
known value; at $n=7,8$ it tightens the frustration-index lower bound by $4$
and $11$, and the explicit witnesses of
Section~\ref{sec:oddsquareverify} close the gap from the construction side,
giving exact values where the general bounds leave a nonzero interval. This sign convention matches the one used by the reconstruction
script \texttt{generate\_q8\_682.py} (Section~\ref{sec:oddsquareverify}),
where $E$ is exactly the set of edges with $J(x,\mathrm{dim})\,s(x)s(y)=+1$;
for a vertex $v$ of degree $\deg_E(v)$ in $E$, we call
$h(v)=2\deg_E(v)-n$ its \emph{local field} under this convention (matching
the local-field histogram reported in Proposition~\ref{prop:q8oddsquare}
below). We do not reproduce Derrida et al.'s derivation of the local-field
lower bound here, and do not need to: Section~\ref{sec:oddsquare-exact}
derives the bounds it uses from Lemmas~\ref{lem:fieldbound}
and~\ref{lem:quant} directly, and cites~\cite{DPTV79,MPR95} only for
historical priority. Their results are stated there for reference ---
established for $D\le7$ by explicit construction, and used by
Marinari--Parisi--Ritort at $D=8$ --- and the corresponding witnesses
are constructed and verified independently in
Section~\ref{sec:oddsquareverify}. One asymmetry of rigour is acknowledged
explicitly: we have \emph{not} machine-verified that DPTV79's own $H_6,H_7$
bond configurations satisfy the odd-square condition at the certificate level
at which our own edge lists are verified. For their constructions the property
follows from the correspondence (Lemma~\ref{lem:switching}: the positive-edge
set of \emph{any} spin configuration under a fully frustrated coupling is
odd-square) applied to their stated construction, not from a data-level check,
which is impossible because no machine-readable configuration of theirs was
ever published.

This connection was brought to our attention, after the original circulation
of this manuscript, by S.~Lai (see Acknowledgements), who identified the
relevance of~\cite{DPTV79,MPR95} and located the improved $Q_8$ result.

\subsection{Exact values for \texorpdfstring{$n=6,7,8$}{n=6,7,8}}\label{sec:oddsquare-exact}

Derrida et al.~\cite{DPTV79} give a local-field lower bound on the number of
negative (frustrated, in their terminology) edges, tight enough to
determine the fully frustrated hypercube's ground-state energy exactly
for $D\le7$ via an explicit recursive construction, and conjectured it
remains tight for $D\ge8$. For $D=6$, they give this explicit
construction directly: ``this configuration for $H_6$ contains 60
negative bonds (out of a total of 192)''~\cite[p.~622]{DPTV79}, i.e.\
$192-60=132$ positive (unfrustrated) edges, with the construction
guaranteed fully frustrated (every square meeting it in an odd number
of negative edges) by their general recursive method. Under the
correspondence of Section~\ref{sec:oddsquare} this is an explicit
odd-square witness with $132$ edges, so $g(6)\ge132$. We cite this for
historical priority only: the lower bound $g(6)\ge132$ is established
below by an explicit odd-square edge list of our own
(Section~\ref{sec:oddsquareverify}, \texttt{q6\_odd\_square\_132.json}),
and the matching upper bound $g(6)\le132$ follows from
Lemma~\ref{lem:fieldbound} and Lemma~\ref{lem:quant} without appeal
to~\cite{HN94}. We did not reconstruct DPTV's specific $D=6$ bond
configuration, though their recursion determines the edge counts without
it.\footnote{What follows is arithmetic within DPTV's construction, recorded for
the reader's convenience; no claim in this paper depends on it.
DPTV's construction builds $H_d$ from $H_{d-3}$ by replacing each
vertex with a supersite. Counting negative bonds through the recursion gives
$12+6+6=24$ at $d=5$: four $H_3$ supersites contribute $4\times3=12$
internal negative bonds, the three white superbonds contribute
$3\times2=6$, and the one black superbond contributes $1\times6=6$.
At $d=6$, eight $H_3$ supersites contribute $8\times3=24$; the
minimal $H_3$ pattern has three black and nine white superbonds, contributing
$3\times6=18$ and $9\times2=18$, for $60$ total --- exactly the figure
DPTV state in the text. At $d=7$, the minimal $H_4$ pattern has $16$
supersites and $32$ superbonds, eight black and twenty-four white, giving
$16\times3+8\times6+24\times2=144$, i.e.\ $448-144=304$ positive edges. So the value $304$ follows from their construction by a few
lines of arithmetic, even though we do not exhibit their $H_7$ edge set.
At $d=8$ the pattern is $H_5$ ($32$ vertices, $80$ bonds, minimal negative
count $24$, matching $r(5)=56$ positive): $32$ supersites and $80$
superbonds, $24$ black and $56$ white, give
$32\times3+24\times6+56\times2=352$ negative bonds, i.e.\
$r(8)=1024-352=672$ --- the Open Problem~\ref{op:equality} table entry whose
gap of $10$ to the field bound $682$ is discussed there.} our witness is an independent one, which however turns
out to realise the same local-field distribution they report (see
Section~\ref{sec:oddsquareverify}).

\begin{lemma}[Field identity, after Derrida--Pomeau--Toulouse--Vannimenus]\label{lem:fieldbound}
For any odd-square edge set $E$ in $Q_n$, writing $U=\{v\in\{0,1\}^n:
v \text{ has an even number of } 1\text{-bits}\}$ for the
even-parity part of
the bipartition of $Q_n$ ($|U|=2^{n-1}$; the argument below is
symmetric in $U$ and its complement, so this choice is only for
concreteness and all numerical values reported for $U$ throughout this
paper, e.g.\ Proposition~\ref{prop:q8oddsquare}'s $h$-distribution,
use this same convention) and $h(v)=2\deg_E(v)-n$ as before, every
$h(v)$ with $v\in U$ is an integer of the same parity as $n$, and
\[
\sum_{v\in U}h(v)^2=n\cdot2^{n-1}.
\]
\end{lemma}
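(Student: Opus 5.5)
Write $\sigma(e)=+1$ for $e\in E$ and $\sigma(e)=-1$ otherwise. For a vertex $v$, the local field is then
\[
h(v)=\sum_{i=0}^{n-1}\sigma(v,v\oplus e_i)=\deg_E(v)-(n-\deg_E(v))=2\deg_E(v)-n .
\]
The parity claim is immediate, since $h(v)$ is a sum of $n$ terms each equal to $\pm1$ and therefore $h(v)\equiv n\pmod 2$.

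For the identity, I will expand the square of this sum directly. The diagonal terms $\sigma^2=1$ contribute $n$ for each vertex, which gives
\[
h(v)^2=n+2\sum_{0\le i<j\le n-1}\sigma(v,v\oplus e_i)\,\sigma(v,v\oplus e_j).
\]
Summing over $v\in U$, the diagonal part contributes $n\cdot|U|=n\cdot2^{n-1}$. The task therefore reduces to showing that the cross terms cancel:
\[
\sum_{v\in U}\sum_{i<j}\sigma(v,v\oplus e_i)\,\sigma(v,v\oplus e_j)=0 .
\]

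The key step is to regroup these cross terms by square, using Lemma~\ref{lem:c4}. Each pair $(v,\{i,j\})$ is a corner of a unique square, namely the one spanned by directions $i,j$ through $v$. Since $U$ is one side of the bipartition, each square has exactly two corners in $U$: $v$ and $w=v\oplus e_i\oplus e_j$, which are opposite corners. Every edge of the square is incident to exactly one of these two corners, so the two cross terms from that square multiply to
\[
\bigl(\sigma(v,v\oplus e_i)\sigma(v,v\oplus e_j)\bigr)\bigl(\sigma(w,w\oplus e_i)\sigma(w,w\oplus e_j)\bigr)=\prod_{e\in C}\sigma(e)=-1 .
\]
Here the product is $-1$ because $E$ is odd-square, so an odd number of the four edges carry sign $+1$ and hence an odd number carry $-1$. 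Two $\pm1$ numbers whose product is $-1$ sum to $0$. Consequently each square contributes zero, the whole cross sum vanishes, and the identity follows.

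The only step needing care is this bookkeeping: checking that every (vertex in $U$, pair of directions) is counted exactly once, and that the two $U$-corners of a square together cover its four edges exactly once each. The correspondence between squares and pairs of $U$-corners is a bijection, so there is no double counting. The argument never uses any property of $U$ beyond its being one colour class, which is why the same identity holds for the complementary class.
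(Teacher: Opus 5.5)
Your proof is correct and follows essentially the same route as the paper's: expand $h(v)^2$, note that the two $U$-corners $v$ and $v\oplus e_i\oplus e_j$ of each square carry cross terms whose product is the square's sign product $-1$, and conclude that they cancel. The paper organises this cancellation per fixed direction pair $(i,j)$, using the fixed-point-free involution $v\mapsto v\oplus e_i\oplus e_j$ on $U$. That pairing is exactly your grouping of the cross terms by squares.
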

\begin{proof}
For $v\in U$ and direction $i\in\{0,\ldots,n-1\}$ write
$s_i(v)=\sigma(v,v\oplus e_i)\in\{\pm1\}$, so $h(v)=\sum_i s_i(v)$ ---
in particular $h(v)\equiv n\pmod2$, being a sum of $n$ terms $\pm1$
(equivalently $h(v)=2\deg_E(v)-n$), which establishes the parity claim
in the statement --- and
$h(v)^2=n+\sum_{i\ne j}s_i(v)s_j(v)$. Fix $i\ne j$ and consider, for
$v\in U$, the square on $\{v,v\oplus e_i,v\oplus e_j,z\}$ where
$z=v\oplus e_i\oplus e_j\in U$. Its four edges have signs $s_i(v)$,
$s_j(v)$, and (computed from $z$'s own perspective, since a sign is a
single scalar attached to the edge) $s_j(z)$, $s_i(z)$. The odd-square
condition means their product is $-1$ --- recall the convention fixed at
the start of this section: $\sigma(e)=+1$ iff $e\in E$, so a square
meeting $E$ in an odd number of edges carries an odd number ($4$ minus an
odd number, i.e.\ $1$ or $3$) of $-1$ factors among its four signs, and
its sign product is $-1$; ``odd-square'' and ``negative square'' are the
same condition under this convention. Thus:
$s_i(v)s_j(v)\cdot s_j(z)s_i(z)=-1$. Writing $X=s_i(v)s_j(v)$ and
$Y=s_i(z)s_j(z)$, both lie in $\{\pm1\}$ and $XY=-1$ forces $X=-Y$, i.e.\
$s_i(v)s_j(v)+s_i(z)s_j(z)=0$. The map $v\mapsto z=v\oplus e_i\oplus e_j$
is a fixed-point-free involution on $U$ (as $e_i\oplus e_j\ne0$), so it
partitions $U$ into pairs $\{v,z\}$ on each of which the two
corresponding terms cancel; summing over all such pairs gives
$\sum_{v\in U}s_i(v)s_j(v)=0$ for every fixed $i\ne j$. Summing over all
ordered pairs $i\ne j$ and adding the $n\cdot|U|$ diagonal contribution
gives $\sum_{v\in U}h(v)^2=n\cdot2^{n-1}$.
\end{proof}

A scope note before the remark: the odd-square hypothesis enters this
proof only square by square, through each individual sign product. The
identity itself is exactly equivalent to the \emph{average} form of that
condition --- by the remark below, a side $U$ satisfies
$\sum_{v\in U}h(v)^2=n\cdot2^{n-1}$ if and only if
$\sum_s\operatorname{spl}_U(s)=\binom n2\,2^{n-2}$ --- and this average
route is how the $19\,477$ non-odd-square catalogue solutions satisfy it
as well, forced by their shared degree sequence
(Proposition~\ref{prop:rigid}(i)).

\begin{remark}[Exact defect formula]\label{rem:defect}
The bookkeeping in this proof runs without the odd-square hypothesis and
yields an exact formula for arbitrary edge sets. For any $E\subseteq
E(Q_n)$, call a square $s$ \emph{$U$-split at} its even-parity corner $v$
if exactly one of the two edges of $s$ incident to $v$ lies in $E$
(equivalently $s_i(v)s_j(v)=-1$ for the direction pair $\{i,j\}$ of $s$),
and let $\operatorname{spl}_U(s)\in\{0,1,2\}$ count the even-parity
corners of $s$ at which it is $U$-split. Each square has exactly two
even-parity corners, related by $v\mapsto v\oplus e_i\oplus e_j$ (a map
that changes exactly two bits and so preserves parity), and
belongs to exactly one direction pair; running the ordered-pair sum of the
proof without the cancellation step, each square contributes
$2-2\operatorname{spl}_U(s)$ once per ordering of its direction pair, so
\[
\sum_{v\in U}h(v)^2
= n\cdot2^{n-1} + n(n-1)\,2^{n-1} - 4\sum_{s}\operatorname{spl}_U(s)
= n^2\,2^{n-1} - 4\sum_{s}\operatorname{spl}_U(s),
\]
the sum running over all $\binom n2\,2^{n-2}$ squares of $Q_n$. Hence the
identity $\sum_{v\in U}h(v)^2=n\cdot2^{n-1}$ holds for a given side of
the bipartition \emph{if and only if}
$\sum_s\operatorname{spl}_U(s)=\binom n2\,2^{n-2}$, i.e.\ iff squares
are $U$-split at exactly one even-parity corner \emph{on average}. The
odd-square condition forces the pointwise version: a sign product $-1$
over the four edges of $s$ means $s_i(v)s_j(v)=-s_i(z)s_j(z)$ at its two
even-parity corners, so $\operatorname{spl}_U(s)=1$ for \emph{every}
$s$, and symmetrically on the odd-parity side. We use the average-split
reformulation diagnostically, not as a search filter: evaluating
$\sum_s\operatorname{spl}_U(s)$ already inspects every square, so it is
no cheaper than the odd-square test itself. The bundled deterministic
script \texttt{field\_identity\_defect.py} verifies the formula -- and
every numerical instance of it quoted in this paper -- on the released
certificates and on random edge sets, using only the standard library.
\end{remark}

This proof uses only that \emph{every} square has product $-1$ (the
odd-square condition), so it is sufficient for the identity to hold; it
is not necessary. Testing against all $19\,866$ published $304$-edge
$Q_7$ solutions -- odd-square and non-odd-square alike -- shows every
one of them satisfies $\sum_{v\in U}h(v)^2=448$, including the
$19\,477$ that are not odd-square. This is explained, for these
particular solutions, by a simpler fact that does not need
odd-squareness either: every one of the $19\,866$ has all degrees in
$\{4,5\}$ (Proposition~\ref{prop:rigid}), and since $Q_7$ is bipartite,
$\sum_{v\in U}\deg(v)=|E|=304$ exactly (every edge has exactly one
endpoint in $U$); writing $a,b$ for the number of degree-$4$,
degree-$5$ vertices in $U$ ($a+b=64$), $4a+5b=304$ forces $b=48,a=16$
regardless of which specific $304$-edge solution is chosen, so
$h(v)=2\deg(v)-7\in\{1,3\}$ takes value $1$ on exactly $16$ vertices of
$U$ and $3$ on the other $48$, giving
$\sum_{v\in U}h(v)^2=16\cdot1^2+48\cdot3^2=16+432=448$ automatically.
We verified the $U$-side split is indeed exactly $\{4^{16},5^{48}\}$
for all $19\,866$, with no exceptions. The identity's failure for the
non-odd-square $Q_8$ Solution~B of Section~\ref{sec:q8} ($1016\ne1024$ on
$U$, while the complementary side gives exactly $1024$ --- so only one side
breaks)
is therefore not explained by odd-squareness as such: Solution~B is locally maximal, like Solution~A, but with the smallest possible
margin: its non-edge violation multiset is
$\{1{:}1,2{:}1,3{:}49,4{:}153,5{:}124,6{:}16\}$, so every missing edge still
creates at least one $C_4$, though one of them creates exactly one (against a
minimum of $2$ for Solution~A). It has an
asymmetric degree distribution between $U$ and its complement
($\{4^3,5^{82},6^{43}\}$ vs.\ $\{4^4,5^{80},6^{44}\}$), whereas
Solution~A, all $19\,866$ $Q_7$ solutions, and the $682$-edge witness all
have symmetric $U$/complement degree distributions. This symmetry is
not itself implied by odd-squareness, however: a small odd-square
counterexample in $Q_3$ (vertices $0,\ldots,7$,
$E=\{(0,2),(2,6),(3,7),(4,5),(4,6),(6,7)\}$, whose six squares meet $E$,
in the direction-pair-then-base enumeration order of Lemma~\ref{lem:c4},
in $1,3,1,3,3,1$ edges) has $U$-degree multiset $\{1,1,1,3\}$ against
$\{0,2,2,2\}$ on the complement, while still satisfying
$\sum_{v\in U}h(v)^2=\sum_{v\in U^c}h(v)^2=12=3\cdot2^2$ on each side
separately. Remark~\ref{rem:defect} settles the identity's exact
logical status for arbitrary edge sets: it holds on a side iff squares are
split at one corner of that side \emph{on average},
$\sum_s\operatorname{spl}_U(s)=\binom n2\,2^{n-2}$. Odd-squareness is
the pointwise case ($\operatorname{spl}_U(s)=1$ for every square, both
sides at once), and the average can be met without it: the catalogue's
very first record -- the non-odd-square solution mentioned in
Section~\ref{sec:oddsquare-exact} -- has per-square split histogram
$\{0{:}60,\,1{:}552,\,2{:}60\}$ on $U$, sixty deficits exactly
cancelled by sixty excesses for a total of $672=\binom72\,2^{5}$,
against the pointwise $\{1{:}672\}$ of the odd-square record $34$. So
the identity is neither a characterisation of odd-squareness (the $Q_3$
example just given is odd-square without the $U$/complement symmetry that
holds for all $19\,866$ $Q_7$ solutions, and record $0$ satisfies the
identity without being odd-square) nor a coincidence specific to the
released catalogue (for the $19\,866$ it is a forced consequence, via
Proposition~\ref{prop:rigid} and $\sum_{v\in U}\deg(v)=|E|$, of
\emph{any} $304$-edge subgraph of $Q_7$ having all degrees in $\{4,5\}$
-- a structural fact about that particular edge count, established
independently of odd-squareness). The one-sided failure of the $Q_8$
Solution~B noted above is likewise quantified exactly by the formula: on
its failing side $\sum_s\operatorname{spl}_U(s)=1794=1792+2$, accounting
for $\sum_{v\in U}h(v)^2=1024-4\cdot2=1016$, while the complementary
side has exactly $1792$ splits and sum $1024$.
\texttt{field\_identity\_defect.py} recomputes each of these numbers
from the released certificates.

The next lemma sits on no critical path in this paper: no upper- or
lower-bound argument below uses it (as re-stated after
Lemma~\ref{lem:quant}). We record it because DPTV79 state the
restriction and because the released witnesses satisfy it --- forcedly,
via their degree sequences.

\begin{lemma}[Nonnegativity at the maximum]\label{lem:nonneg}
If $E$ is odd-square in $Q_n$ with $|E|$ maximum (i.e.\ $|E|=g(n)$), then
$h(v)\ge0$ for every $v\in\{0,1\}^n$.
\end{lemma}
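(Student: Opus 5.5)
The plan is a one-step switching argument: if some vertex had negative local field, switching at that vertex would give a strictly larger odd-square edge set. Fix an odd-square $E$ with $|E|=g(n)$ and suppose, for contradiction, that some vertex $v$ has $h(v)<0$, i.e.\ $\deg_E(v)<n/2$. Define $E'$ as $E$ with membership toggled on the $n$ edges at $v$:
\[
E' \;=\; E\,\triangle\,\{\,v(v\oplus e_i): 0\le i\le n-1\,\}.
\]
In the signature language of Section~\ref{sec:oddsquare}, this is the vertex switching by the function $t$ with $t(v)=-1$ and $t\equiv+1$ elsewhere, so $\sigma'(x,y)=\sigma(x,y)\,t(x)t(y)$.

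\textbf{Step 1: $E'$ is still odd-square.} I would check this directly, square by square, without needing Lemma~\ref{lem:switching}. A square not containing $v$ has none of its edges toggled. A square containing $v$ has exactly two of its four edges incident to $v$, and both are toggled. Toggling two edges changes $|s\cap E|$ by $-2$, $0$ or $+2$, so its parity is preserved. Hence every square still meets $E'$ in one or three edges. Equivalently, the sign product around each square is multiplied by $t(v)^2=1$.

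\textbf{Step 2: count edges.} The $\deg_E(v)$ edges at $v$ that lay in $E$ leave, and the $n-\deg_E(v)$ edges at $v$ that lay outside $E$ enter. No other edge changes. Therefore
\[
|E'| = |E| - \deg_E(v) + \bigl(n-\deg_E(v)\bigr) = |E| - h(v).
\]
Since $h(v)<0$, this gives $|E'|>|E|=g(n)$, contradicting the definition of $g(n)$. So $h(v)\ge0$ for every $v\in\{0,1\}^n$, on both sides of the bipartition.

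There is no real obstacle here; the only points needing care are bookkeeping. First, the sign convention must be kept straight: $h(v)=2\deg_E(v)-n$ with $E$ the set of positive edges, so a negative field means a minority of edges at $v$ lie in $E$. Second, Step 1 relies on the fact that every square through $v$ uses exactly two of $v$'s edges. The same computation also shows that a maximum set with $h(v)=0$ admits an equally large switched partner, which is why the conclusion is only $h\ge0$ and not $h>0$.
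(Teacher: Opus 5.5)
Your proof is correct and is essentially the paper's argument. Both flip the spin at $v$, observe that each square through $v$ uses exactly two of $v$'s edges so odd-squareness is preserved, and count that $|E|$ changes by $-h(v)>0$, contradicting maximality.
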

\begin{proof}
Suppose $h(v)<0$ for some $v$. Flipping the spin at $v$ (replacing
$s(v)$ by $-s(v)$, all other spins unchanged) flips $\sigma(v,w)$ for
every edge $(v,w)$ incident to $v$ and leaves every other edge's sign
unchanged. Every square containing $v$ has exactly two edges incident
to $v$ (the square's other two edges are not incident to $v$ and their
signs are untouched); flipping both leaves their product, and hence the whole
square's sign product, unchanged, so odd-squareness is preserved
everywhere. At $v$ itself, the positive-edge count changes from
$p=(n+h(v))/2$ to $q=(n-h(v))/2$, a change of $q-p=-h(v)>0$; no edge
other than the $n$ edges incident to $v$ changes sign, so $|E|$
increases by exactly $-h(v)>0$. This contradicts maximality of $|E|$, so no such $v$ exists.
\end{proof}
This is a corollary of, not an alternative to, Lemma~\ref{lem:switching}:
the argument uses only that flipping one vertex's spin preserves
odd-squareness (immediate from the switching correspondence) and counts
the resulting change in $|E|$ directly. All $389$ released
odd-square $Q_7$ solutions have $304$ edges, which is the maximum by the
upper bound $g(7)\le304$ proved below in Section~\ref{sec:oddsquare-exact}
(the audit locating these 389 solutions is Section~\ref{sec:oddsquareaudit}), so
the Lemma's hypothesis applies to every one of them; we checked $h(v)\ge0$ on all $389$ with no counterexample --- though, like
the square split of Proposition~\ref{prop:q6oddsquare}(ii) and the
$\mathrm{Tr}(A^4)$ identity of Proposition~\ref{prop:rigid}(iii), this is
forced rather than independent: Proposition~\ref{prop:rigid}(i) already pins
every degree to $\{4,5\}$, hence $h=2\deg-7\in\{1,3\}$.

Given Lemma~\ref{lem:fieldbound} alone, $|E|=(n\cdot2^{n-1}+\sum_{v\in U}h(v))/2$
is bounded above by the integer optimisation problem of
maximising $\sum_{v\in U}h(v)$ subject to the \emph{fixed}
sum of squares $\sum_{v\in U}h(v)^2=n\cdot2^{n-1}$ over the $2^{n-1}$
integers $h(v)$ of fixed parity (necessarily $|h(v)|\le n$, since
$h(v)=2\deg(v)-n$ with $0\le\deg(v)\le n$; the bundled
\texttt{solve\_field\_ip.py} imposes that range explicitly, and dropping it
changes neither the optima nor the number of optimal distributions) (we use only that every
realisable $h$-vector satisfies these constraints, not the converse
that every integer vector satisfying them is realisable as an
odd-square signature; the bound obtained is therefore an upper bound on
$|E|$, tight whenever a matching witness is exhibited). This is
essentially the optimisation problem DPTV79 pose immediately after their
own derivation of Lemma~\ref{lem:fieldbound}: maximise $\bar h$ (their
notation for the mean field) subject to the same sum-of-squares
constraint, with one difference: DPTV79 additionally impose $h\ge0$ before
carrying out the distribution optimisation (p.~620, on the stated grounds
that a ground-state search cannot have negative field values), whereas we
drop that restriction entirely. The restriction is in fact vacuous for every $n$:
replacing $h$ by $|h|$ preserves both the parity condition and
$\sum h^2$ while not decreasing $\sum h$, so no optimal vector has a
negative entry, and the two problems have the same optimum \emph{and} the
same optimal distributions at every~$n$. We none the less state the
programme without the sign constraint, since the bound is then visibly
independent of any positivity assumption. Their
solution method is to observe that the optimal
distribution concentrates on the integers nearest $\sqrt n$ -- unique
when exactly two suffice, multiple solutions (their Table~I) when three
or four are needed, which they note happens at $n=6$ and $n=8$ (p.~621) -- and
to tabulate the resulting optima for $n\le10$ by this reasoning rather
than by a general closed-form argument. No sign
hypothesis enters our derivation: the constraints admit negative $h$, and
Lemma~\ref{lem:nonneg} is \emph{not} used. Its role
is the separate one of describing configurations that actually attain
the maximum (matching what we verified computationally on all $389$
odd-square $Q_7$ witnesses above).

The following elementary observation turns DPTV79's ``concentrate near
$\sqrt n$'' heuristic into a closed form that is transparently exact and
complete, without appeal to their table. It is the only ingredient
beyond the classical two-point convexity inequality they already use.

\begin{lemma}[Quantised two-point inequality]\label{lem:quant}
Let $a$ be an integer of the same parity as $n$ and put $b=a+2$. Then for
every integer $h$ of the same parity as $n$,
\[
(h-a)(h-b)\ \ge\ 0
\qquad\text{and}\qquad
(h-a)(h-b)\equiv0 \pmod 8 .
\]
In particular $(h-a)(h-b)\in\{0,8,24,48,\ldots\}$, so it is either $0$
(exactly when $h\in\{a,b\}$) or at least $8$.
\end{lemma}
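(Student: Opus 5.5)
The plan is to reduce everything to a statement about consecutive integers. Since $h$, $a$ and $n$ all have the same parity, $h-a$ is even. Write $h-a=2k$ with $k\in\mathbb{Z}$. Then $h-b=h-a-2=2(k-1)$, so
\[
(h-a)(h-b)=4k(k-1).
\]
Both claims then follow from two elementary facts about the product $k(k-1)$ of two consecutive integers.

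First, nonnegativity. The quadratic $x(x-1)$ is negative only for real $x$ strictly between $0$ and $1$, and no integer lies there. So $k(k-1)\ge0$, and hence $(h-a)(h-b)\ge0$. Equivalently, and this is how I would phrase it for the reader, $h$ and the interval endpoints $a<b$ are all even or all odd, so no admissible $h$ can lie strictly between $a$ and $b=a+2$.

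Second, divisibility by $8$. One of two consecutive integers is even, so $k(k-1)$ is even and $4k(k-1)\equiv0\pmod 8$.

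For the ``in particular'' clause I would note that $k(k-1)=0$ exactly when $k\in\{0,1\}$, i.e.\ exactly when $h\in\{a,b\}$. Otherwise $k(k-1)$ is a positive pronic number $2,6,12,\ldots$, which gives the values $8,24,48,\ldots$ and in particular the lower bound $8$.

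There is no real obstacle here. The only point needing care is to use the parity hypothesis at the start, so that $h-a$ is even and the substitution $h-a=2k$ is legitimate.
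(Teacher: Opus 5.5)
Your proposal is correct and follows essentially the same route as the paper. Both write $h-a=2u$ to get $(h-a)(h-b)=4u(u-1)$, then use that a product of consecutive integers is nonnegative and even, and that it vanishes exactly when $u\in\{0,1\}$, i.e.\ $h\in\{a,b\}$.
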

\begin{proof}
$h-a$ is even, say $h-a=2u$ with $u\in\mathbb{Z}$; then $h-b=2u-2$, so
$(h-a)(h-b)=4u(u-1)$. Consecutive integers $u,u-1$ have product $\ge0$ and even --- one of
the two is even --- say $u(u-1)=2k$ with $k\ge0$, so $4u(u-1)=8k$, a
nonnegative multiple of $8$; its successive values, at
$u\in\{0,1\},\{-1,2\},\{-2,3\},\ldots$, are $0,8,24,48,\ldots$ It
vanishes exactly when $u\in\{0,1\}$, i.e.\ $h\in\{a,b\}$.
\end{proof}

Summing Lemma~\ref{lem:quant} over $v\in U$ and writing
$S=\sum_{v\in U}h(v)$, $m=|U|=2^{n-1}$, $Q=\sum_{v\in U}h(v)^2=n\cdot2^{n-1}$
gives the single inequality used for all three values of $n$:
\begin{equation}\label{eq:quantsum}
\Delta \ :=\ \sum_{v\in U}(h(v)-a)(h(v)-b)\ =\ Q-(a+b)S+ab\,m
\ \ge\ 0,
\qquad \Delta\equiv0\!\!\pmod 8 .
\end{equation}
Before specialising, one relaxation is worth recording. Cauchy--Schwarz gives
$S\le\sqrt{Q\,m}=\sqrt n\,2^{n-1}$ and hence
$|E|\le2^{n-2}(n+\sqrt n)$ --- the same expression as the
Brass--Harborth--Nienborg lower bound in~\eqref{eq:bounds}. So at $n=4^r$
the two meet and $g(n)=2^{n-2}(n+\sqrt n)$ exactly. For the lower-bound
half of this equality it is not enough that BHN's construction attains
that edge count: $g(n)$ requires an \emph{odd-square} set of that size,
and this is precisely what LNSW26 supply --- they prove that the BHN
construction induces a signature of $Q_n$ in which every $4$-cycle is
negative (equivalently, that the construction is odd-square), a fact they
note does not follow from BHN's original work (\cite{LNSW26},
Section~5.2 and Theorem~5.4; cf.\ Section~\ref{sec:oddsquare} above). With
that identification the BHN construction is optimal within the odd-square
subclass at those $n$. This is the shortest explanation of why
LNSW26's bounds close precisely at powers of $4$, and it locates what
Lemma~\ref{lem:quant} adds: at $n=6,7,8$ the relaxation is not tight. At
$n=7$ nonnegativity alone already gives the exact $S\le160$, and at $n=8$
integrality plus the parity of $S$ (each $h(v)$ has the parity of $n$ by
Lemma~\ref{lem:fieldbound}, so $S$ is even) gives the exact $S\le340$; the
mod-$8$ refinement is what is \emph{essential} at $n=6$, where it excludes
$S=74$.
The nonnegativity is the classical bound $S\le(Q+ab\,m)/(a+b)$; the
congruence removes whatever residual gap elementary integrality and the
parity of $S$ leave between that real bound and the true integer optimum
-- among our three instances this extra strength is needed only at $n=6$,
as the specialisations below make explicit. No computational filter is invoked in this modular
step: Lemma~\ref{lem:quant} proves term by term that every summand in
$\Delta$ is a nonnegative multiple of $8$, so their sum has the same property. We apply \eqref{eq:quantsum} with the pair
$(a,b)$ of parity-correct integers straddling the root-mean-square field
$\sqrt{Q/m}=\sqrt n$, namely $(1,3)$ for $n=7$ ($\sqrt7\approx2.65$)
and $(2,4)$ for $n=6,8$ ($\sqrt6\approx2.45$, $\sqrt8\approx2.83$);
this is the minimising choice, and no search is needed to see it: putting
$u=a+1$ and $Q=nm$, the bound becomes
\[
\frac{Q+ab\,m}{a+b}=\frac{m}{2}\Bigl(u+\frac{n-1}{u}\Bigr),
\]
which is decreasing for $0<u<\sqrt{n-1}$ and increasing thereafter, so the
optimum over real $u$ is at $u=\sqrt{n-1}$; since the admissible $u$ of the
correct parity are spaced two apart, it suffices to evaluate the two
straddling $\sqrt{n-1}$ and take the smaller. This gives
$(2,4)$ at $n=6$ ($u=3$ against $\sqrt5\approx2.24$; the alternative $(0,2)$
has $u=1$ and yields $96$), $(1,3)$ at $n=7$, and $(2,4)$ at $n=8$.
For completeness, imposing the mod-$8$ condition on the other adjacent
same-parity pairs does not improve these choices: the next-best quantised
upper bounds on $S$ are $96$, $176$, and $408$ for $n=6,7,8$ respectively,
all weaker than $72,160,340$. Pairs with
$a+b=0$ make \eqref{eq:quantsum} degenerate and give no bound; $a+b<0$
reverses the inequality and bounds $S$ from below.

\textbf{$n=6$.} Here $m=32$, $Q=192$, $(a,b)=(2,4)$, so
$\Delta=448-6S$. Nonnegativity alone gives only
$S\le74.\overline{6}$, hence $S\le74$ by parity; but
$448\equiv0\pmod 8$ forces $6S\equiv0\pmod8$; dividing by
$\gcd(6,8)=2$ this reads $3S\equiv0\pmod4$, and $3$ is invertible
modulo $4$, so $S\equiv0\pmod4$.
The largest such $S$ below $74.\overline{6}$ is
\[
S\le72,
\qquad\text{whence}\qquad
g(6)\le\frac{192+72}{2}=132 .
\]
(The exclusion of $S=74$ just performed is the one place among our three
instances where the congruence is essential.) At $S=72$ we have $\Delta=16$. By Lemma~\ref{lem:quant} each vertex
contributes a value in $\{0,8,24,48,\ldots\}$ (the set $4u(u-1)$,
$u\in\mathbb{Z}$); the admissible values not exceeding $16$ are $0$ and
$8$ (the next, $24$, already exceeds it), and $16$ itself is not
admissible ($4u(u-1)=16$ has no integer solution: $u(u-1)=4$ has none),
so the only way to write $16$ as a sum of admissible values
is $8+8$: exactly two vertices of $U$ have $h\in\{0,6\}$ and the
remaining $30$ have $h\in\{2,4\}$. The witness of
Section~\ref{sec:oddsquareverify} attains $S=72$ with $h$-distribution
$\{0{:}2,\,2{:}24,\,4{:}6\}$ on $U$, so the bound is tight and
$g(6)=132$ is established without external input --- in particular
without~\cite{HN94}, which we retain only for the unrestricted value
$\ex(Q_6,C_4)=132$ (Section~\ref{sec:q6}).

\textbf{$n=7$.} Here $m=64$, $Q=448$, $(a,b)=(1,3)$, so
$\Delta=640-4S\ge0$ gives $S\le160$, attained with $\Delta=0$ (the
congruence adds nothing at $n=7$: it only restates that $S$ is even,
automatic since every $h(v)$ is odd and $|U|=64$ is even). By
Lemma~\ref{lem:quant} equality $\Delta=0$ forces $h(v)\in\{1,3\}$ at
\emph{every} $v\in U$; combined with $\sum h=160$ and $|U|=64$ this
pins the distribution to $\{1{:}16,\,3{:}48\}$ uniquely. Hence
\[
g(7)\le\frac{448+160}{2}=304 .
\]
Since $S=2|E|-n\,2^{n-1}$ holds for \emph{any} edge set of $Q_n$ at all,
by bipartiteness alone, attaining $S=160$ at $|E|=304$ is automatic
and does not by itself exhibit an odd-square witness; what is needed is
an explicit odd-square set achieving it. The odd-square audit
(Section~\ref{sec:oddsquareaudit}) supplies $389$ such witnesses among
the released $304$-edge $Q_7$ solutions; the first in file order is
global index $34$ (\texttt{q7\_edges\_304.jsonl.part1}, line $35$; SHA-256
below is of the canonicalised \texttt{edges} array -- matching
\texttt{q7\_odd\_square\_389.csv}'s \texttt{solution\_sha256} column,
not the JSONL line or any \texttt{hash} field:
\texttt{2f2649ae9c4d1c901ba5a88f9825689de3cc19\allowbreak
de1b742d86f5d66dbfd3aebec7}), with square-intersection histogram $\{1{:}96,\,3{:}576\}$,
confirming it is genuinely odd-square and attains $S=160$. (The file's
very first record, by contrast, has histogram
$\{1{:}36,\,2{:}120,\,3{:}516\}$ and is \emph{not} odd-square, despite
also satisfying $S=160$ -- exactly because that identity holds
regardless of odd-squareness.) So $g(7)=304$.

\textbf{$n=8$.} Here $m=128$, $Q=1024$, $(a,b)=(2,4)$, so
$\Delta=2048-6S$. Nonnegativity gives $S\le341.\overline{3}$; every
$h(v)$ is even (parity of $n=8$), so $S$ is even and
\[
S\le340,
\qquad\text{whence}\qquad
g(8)\le\frac{1024+340}{2}=682 .
\]
At $S=340$ we have $\Delta=8$: \emph{exactly one} vertex of $U$
contributes, and it has $h\in\{0,6\}$, the other $127$ having
$h\in\{2,4\}$ --- as at $n=6$, $8$ decomposes only as one contribution
of $8$, the next admissible value $24$ already exceeding $\Delta$. (The
mod-$8$ congruence independently forces
$S\equiv0\pmod4$, consistent with the above, but evenness already
suffices at $n=8$; as at $n=7$, the congruence's essential use is at
$n=6$ only.) Pointwise equality $h(v)\in\{2,4\}$ everywhere is
therefore \emph{impossible} at the optimum for $n=8$ --- in contrast
with $n=7$, where $\Delta=0$ does force it. The released $682$-edge
witness attains $S=340$ with $h$-distribution
$\{0{:}1,\,2{:}84,\,4{:}43\}$ on $U$ (independently recomputed from the
released edge list), consistent with this description, so the bound is
tight and $g(8)=682$.

We stress that Lemma~\ref{lem:quant} solves these three instances
exactly, not merely up to rounding: an independent exhaustive solution
of the integer programme
$\max\{\sum h : \sum h^2=n\cdot2^{n-1},\ h\equiv n\ (2),\ |h|\le n\}$
by dynamic programming over the sum-of-squares budget (the displayed
$|h|\le n$ is automatic for any realisable local field because
$h=2\deg_E(v)-n$; it is included only to make the finite computational
state space explicit and is not used in the closed-form upper-bound proof) returns optima
$72$, $160$, $340$ for $n=6,7,8$ respectively, matching the bounds
above, and returns exactly $3$, $1$ and $2$ optimal $h$-distributions
respectively --- for $n=6$,
$\{2{:}30,6{:}2\}$, $\{0{:}1,2{:}27,4{:}3,6{:}1\}$ and
$\{0{:}2,2{:}24,4{:}6\}$ (labelled A, B and C throughout; in DPTV79's
Table~I terms, C is their ``first solution'' and A and B are their ``other
solutions'', the realisable one thus being C, see
Section~\ref{sec:oddsquareverify}); for $n=7$, $\{1{:}16,3{:}48\}$; for $n=8$,
$\{2{:}87,4{:}40,6{:}1\}$ and $\{0{:}1,2{:}84,4{:}43\}$.
These counts also follow from Lemma~\ref{lem:quant} in a line, so the
enumeration is a check and not a source. The multiplicity phenomenon itself
is independently citable: DPTV79 record (p.~621) that when the optimum
concentrates on three or four integers they find many solutions, and that
this happens exactly at $d=6$ and $d=8$ -- the two dimensions at which the
programme here returns more than one optimal distribution. At $n=7$, $\Delta=0$ forces
$h\in\{1,3\}$ pointwise and $\sum h=160$ over $|U|=64$ then fixes the
multiplicities, giving one distribution. At $n=6$, the $\Delta=16$ analysis
above already fixes exactly two vertices at $h\in\{0,6\}$ and the other
$30$ at $h\in\{2,4\}$; writing $j$ for the number of those two at $h=6$ and
$k$ for the number of the remaining $30$ at $h=4$, $\sum h=72$ reads
$6j+4k+2(30-k)=72$, i.e.\ $3j+k=6$, whose solutions
$(j,k)=(2,0),(1,3),(0,6)$ give exactly A, B and C. At
$n=8$, the $\Delta=8$ analysis above leaves one vertex at $h\in\{0,6\}$ and
$127$ at $h\in\{2,4\}$; the same count gives $3j+k=43$ with $j\in\{0,1\}$,
hence $(j,k)=(1,40)$ and $(0,43)$ --- the two distributions above. Both $n=8$
distributions are realised: the released $682$-edge witness
(\texttt{q8\_odd\_square\_682.json}) attains $\{0{:}1,2{:}84,4{:}43\}$,
and $\{2{:}87,4{:}40,6{:}1\}$ is realised by two further witnesses in
\texttt{q8\_B\_witnesses.json} (seeds $20261207$ and $20261218$),
independently verified by \texttt{verify\_q8\_B\_witnesses.py}:
each has $682$ edges, square-intersection histogram $\{1{:}301,3{:}1491\}$
(odd-square), and $\sum_{v\in U}h(v)^2=1024=8\cdot2^7$ (\checkmark). All
three $n=8$ witnesses are distinct edge sets. In fact a single B witness
already settles both: its $U$-side histogram is $\{2{:}87,4{:}40,6{:}1\}$ while
its $U^{c}$-side histogram is $\{0{:}1,2{:}84,4{:}43\}$, so one edge set
realises each optimal distribution on one side of the bipartition (the
$682$-edge witness, by contrast, has $\{0{:}1,2{:}84,4{:}43\}$ on both sides).
Whether some odd-square $682$-edge set realises $\{2{:}87,4{:}40,6{:}1\}$ on
\emph{both} sides we have not determined.
For $n=6$, only distribution~C is realised;
distributions A and B are provably not realisable
(Proposition~\ref{prop:q6realizability}, an exhaustive finite decision).
For $n=7$, the unique optimal distribution is necessarily realised. Thus $n=8$
is qualitatively different from $n=6$: \emph{all} optimal field distributions
for $n=8$ admit a realisation, while for $n=6$ only one does. In each case
computation is reproducible with the bundled
\texttt{solve\_field\_ip.py} (standard library only, seconds to run);
it is a check on the closed-form argument above, not a substitute for
it, and the two are logically independent in a sense worth making
explicit: the upper bound $S\le72,160,340$ is established by
Lemma~\ref{lem:quant} together with nonnegativity, integrality and the
parity of $S$ (the congruence adding essential strength only at $n=6$),
without reference to how many configurations attain it -- the argument
in each of the three cases above shows directly that no integer
solution exceeds the stated value, full stop. The DP's role is only to
confirm, separately, that the count of optimal $h$-\emph{distributions}
(as opposed to spin configurations realising them) is exactly $3,1,2$
-- and this too is a complete count, not a sample. To be precise, the
code groups equal field values and enumerates count distributions
$\{h{:}\text{multiplicity}\}$, not the exponentially many ordered
assignments of $h$-values to vertices; within this count-distribution
state space, both the optimisation pass (forward DP) and the
enumeration pass (backtracking reconstruction) traverse the same
$(i,k,s)$ state tree exhaustively, so completeness of the list follows
from the same recursion that establishes correctness of the optimum,
rather than from a separate argument.

All three upper bounds $g(6)\le132$, $g(7)\le304$ and $g(8)\le682$ are thus established
directly from Lemma~\ref{lem:fieldbound} and Lemma~\ref{lem:quant} --
reproducing, by a closed-form argument that makes the reasoning self-contained, the
$n=6,7,8$ entries among the optima DPTV79's Table~I tabulates for
$n\le10$, and supplying
a public machine-verifiable certificate at $n=8$ after DPTV79 left that
dimension open and MPR95 later reported attainment without releasing a
configuration (Section~\ref{sec:oddsquareverify}); we retain the citations to
Derrida et al.~\cite{DPTV79} and Marinari--Parisi--Ritort~\cite{MPR95}
for historical priority and for the connection that prompted this
section, not because the upper-bound
proof depends on them.

Marinari, Parisi, and Ritort~\cite{MPR95} report having exhibited, by
simulated annealing, a $D=8$ ground state attaining Derrida et
al.'s improved energy lower bound. Their attainment claim, under the
correspondence developed below, corresponds to the same bound value
$\sum_{v\in U}h(v)=340$ established directly above -- MPR95 do not
themselves report a quantity called $S$ or $340$; the correspondence is
ours. Their Hamiltonian is
$H\equiv-D^{-1/2}\sum_{(x,y)}J_{x,y}s_xs_y$~\cite[Eq.~3]{MPR95}; writing
$S=\sum_{(x,y)}J_{x,y}s_xs_y$ for the sum over \emph{all} edges (not
just $U$), a bipartite argument identical to the one used for
Lemma~\ref{lem:fieldbound} shows $S=\sum_{v\in U}h(v)$ exactly (every
edge has exactly one endpoint in $U$, so summing the local field over
$U$ counts each edge's sign once), so $H=-S/\sqrt D$ and, per site,
$H/N=-S/(N\sqrt D)$ with $N=2^D$. Since each positive edge contributes
$+1$ to $S$ and each negative edge contributes $-1$,
$S=|E|-(D2^{D-1}-|E|)=2|E|-D2^{D-1}$, hence explicitly
\[
  |E|=\frac{D2^{D-1}-N\sqrt D\,(H/N)}{2}.
\]
At $D=8,S=340$ (equivalently $|E|=(1024+340)/2=682$) this is, step by
step, $H/N=-S/(N\sqrt D)=-340/(256\sqrt8)=-340/724.077\ldots
=-0.46956\ldots$, i.e.\ $H\approx-120.2$, about $-0.470$ per site. As a cross-check
against MPR95's own text: they report (in their $D{=}9$ discussion, pp.~3--4 of
arXiv:cond-mat/9410089, the passage checked at page level as recorded in
the Acknowledgements) that
``on a scale where the minimal energy jump is 1'' their D=9 naive
bound $H/N=-1/2$ corresponds to an integer value of $-384$ (i.e.\ a
scale factor of $768=384/(1/2)$), and state that on this same scale
``the improved lower bound at $D=8$ gives $-361$'' -- note this $768$
factor is MPR95's own $D{=}9$ comparison scale, not an intrinsic
$D{=}8$ energy-quantisation unit; they use it purely to compare bound
values across dimensions on one common integer axis. Applying the same
factor (as MPR95 themselves do for the $D=8$ figure) to our value gives $768\times(-0.4696\ldots)=-360.62\ldots
\approx-361$, matching their reported integer to within rounding. In one
line, the affine correspondence is
\[
-361=\operatorname{round}\!\Bigl(768\cdot\frac{-S}{2^{8}\sqrt8}\Bigr)
\quad\text{at }S=340,
\qquad
|E|=\frac{1024+S}{2}=682 .
\] The
identification is not an accident of the rounding width: the nearest
admissible value below, $S=338$, gives $-359$ under the same
nearest-integer rounding rule, and $S=336$ gives $-356$, so among the
admissible values (even $S\le341\tfrac13$, of which $340$ is the
largest) only $S=340$ rounds to $-361$; the identification even survives
one step past the bound, since the inadmissible $S=342$ and $S=344$
(both excluded by nonnegativity) would give $-363$ and $-365$. Three
further data points fix and test the scale under the identical rule; their
evidentiary roles differ, so we state them once here to keep the count
straight: the $D=9$ \emph{bound} value calibrates the scale (it is what pins
$768$ down, so it is not an independent check); the $D=8$ and $D=10$ values
are the two independent checks; and the $D=9$ \emph{attained} value is a
bound-free corroboration at the calibration dimension. First the calibration
point: at $D=9$, $\sqrt9=3$ is itself an integer of the right parity,
so the field-bound optimum is the unique uniform distribution $h\equiv3$,
giving $S=768$, $H/N=-1/2$ exactly, and $768\times(-1/2)=-384$ --
an exact match to the value MPR95 state at $D=9$. That value is the
field-bound target, \emph{not} an attained ground state: MPR95 report failing
to reach it, obtaining $-0.494792$, i.e.\ $-380$ on the same scale, four units
short. Whether the bound is attained at $n=9$ is open here too
(Open Problem~\ref{op:n9}). The agreement to be read off this point is
therefore between two computations of the same bound, not evidence about
$g(9)$. This point,
however, is where the scale factor $768$ is itself pinned down, so it
corroborates the conversion without being independent of it; the two
independent checks are $D=8$ and $D=10$. At $D=10$, $m=512$, $Q=5120$, $(a,b)=(2,4)$ gives
$\Delta=9216-6S$, so $S\le1536$ (with $\Delta=0$, distribution
$\{2{:}256,4{:}256\}$), and
$768\times(-1536/(1024\sqrt{10}))\approx-364.29\approx-364$, matching
MPR95's reported $D=10$ value of $-364$ to the same rounding rule. The scale factor is in fact forced rather than fitted. Every configuration
has $S=2|E|-D\,2^{D-1}$ with $D\,2^{D-1}$ even for $D\ge2$, so $S$ is an even
integer and the achievable values of $H/N=-S/(N\sqrt D)$ lie on a lattice of
pitch $2/(N\sqrt D)$. (A single spin flip at $v$ changes $S$ by $-2h(v)$ and
hence $H/N$ by $2|h(v)|/(N\sqrt D)$ in magnitude -- a vertex-dependent
multiple of the pitch, equal to the pitch itself only when $|h(v)|=1$; for
even $D$ every $h(v)$ is even, so no single flip ever realises the pitch
there. The quantisation unit is the lattice pitch, not any particular flip.)
At $D=9$ the pitch is $2/(512\cdot3)=1/768$, so $768$ is what MPR95's
``minimum energy jump equal to $1$'' scale must mean there; the same pitch at
$D=8$ is $2/(256\cdot2\sqrt2)=1/(256\sqrt2)\approx1/362.04$, confirming that
$768$ is not a $D=8$ quantisation unit. A further point
corroborates it empirically: MPR95
also report their \emph{attained} $D=9$ value both as $-0.494792$ and as
$-380$ on the integer scale, and $768\times(-0.494792)=-379.99\ldots$, so $768$
is forced by a datum that involves no bound at all; being a $D=9$ datum it
shares the calibration dimension, so we count it as corroboration rather than
as a third independent check. The obvious rival reading
--- one unit per minimal energy jump in each dimension --- is excluded: it
reproduces $-384$ at $D=9$ but gives $-S/2=-170$ at $D=8$, nowhere near
MPR95's $-361$. All four data points -- the calibrating $D=9$ bound, the two
independent checks at $D=8$ and $D=10$, and the bound-free $D=9$ attained
value -- agree under one scale, which is considerably stronger than the
$D=8$ figure alone.
Their paper reports
the $D=8$ attainment in energy units only, without an explicit spin configuration
or its translation into edge-count terms; we supply that translation
and an explicit, machine-verified $682$-edge certificate in
Section~\ref{sec:oddsquareverify} below. Under the correspondence of
Section~\ref{sec:oddsquare}, this historical attainment report at $D=8$
corresponds to $g(8)=682$, matching what was already established above
from Lemma~\ref{lem:fieldbound} together with the
integer/parity constraints and elementary inequalities alone (as
detailed above, Lemma~\ref{lem:nonneg} is not needed for this bound);
the explicit
witness realising it
is due to the present reconstruction, prompted by S.~Lai's
identification of the connection to~\cite{DPTV79,MPR95}. Together with
the matching witnesses at $n=6,7$ this gives
Theorem~\ref{thm:oddsquare}: $g(6)=132$, $g(7)=304$, and $g(8)=682$,
each half of each equality now carried by material released with this
paper.

\subsection{Verification}\label{sec:oddsquareverify}

We constructed explicit witnesses for all three values and
machine-verified them independently of~\cite{DPTV79,MPR95,HN94}.

\textbf{$Q_6$, $g(6)=132$.} We obtained a 132-edge odd-square witness
for $Q_6$ by the same route used for $Q_8$ below: the canonical fully
frustrated coupling
\[
J(x,\mathrm{dim})=(-1)^{\mathrm{popcount}(x\,\&\,(2^{\mathrm{dim}}-1))},
\]
verified fully frustrated on $Q_6$ (all $240$ squares have coupling
product $-1$, no exceptions), together with a $64$-bit spin
configuration. Because switching preserves frustration, the set of
edges on which $J(x,y)s_xs_y=+1$ is odd-square for \emph{every} spin
assignment, so the construction reduces to maximising that edge count,
i.e.\ to a ground-state search. We ran a fixed-seed simulated
annealing over the $64$ spins (\texttt{search\_q6.py}, seed
$20260823$), which reached $132$ edges on its first trial; the run is
exactly reproducible from the released script. The resulting witness is
regenerated and self-checked by \texttt{generate\_q6\_132.py} and
released as \texttt{q6\_odd\allowbreak\_square\_132.json}.

\begin{proposition}\label{prop:q6oddsquare}
The $Q_6$ witness edge set satisfies:
\begin{enumerate}[label=(\roman*)]
  \item $132$ edges, all valid $Q_6$ edges, no repetitions.
  \item Square-intersection histogram $\{1\!:\!30,\;3\!:\!210\}$ over all
    $240$ squares: the odd-square condition holds everywhere, and in
    particular the set is $C_4$-free. (Once odd-squareness has independently
    been verified, and given $|E|$, the split is forced rather than an
    additional independent check --- odd-squareness gives
    $N_0=N_2=N_4=0$, so the two equations below account for every
    square: each edge of $Q_n$ lies in
    exactly $n-1$ squares, so double counting gives
    $N_1+3N_3=(n-1)|E|$ and $N_1+N_3=\binom{n}{2}2^{n-2}$, whence
    $N_3=\bigl((n-1)|E|-\binom{n}{2}2^{n-2}\bigr)/2$. This applies equally to
    the $Q_7$, $Q_8$ and Solution~A histograms quoted elsewhere in this paper.)
  \item Degree sequence $\{3^4,\,4^{48},\,5^{12}\}$; degree sum
    $264=2\cdot132$.
  \item Local-field histogram on $U$ equal to
    $\{0\!:\!2,\,2\!:\!24,\,4\!:\!6\}$, giving
    $\sum_{v\in U}h(v)=72$ and $\sum_{v\in U}h(v)^2=192=6\cdot2^5$,
    as required by Lemma~\ref{lem:fieldbound}; the complementary side
    of the bipartition has the same histogram.
  \item Local maximality: no edge of $Q_6$ can be added to it while
    preserving $C_4$-freeness. More precisely, adding any of the $60$
    non-edges creates at least two new $C_4$s; the distribution of the number
    of newly completed $C_4$s per non-edge is
    $\{2{:}2,3{:}29,4{:}26,5{:}3\}$.
\end{enumerate}
\end{proposition}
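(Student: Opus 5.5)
The statement is a finite certificate about one explicit edge set, so the proof is a direct verification against the released file \texttt{q6\_odd\_square\_132.json}. It reuses the enumeration of Lemma~\ref{lem:c4} and the arithmetic already developed in Section~\ref{sec:oddsquare-exact}. I would organise it so that the only genuinely computational inputs are two exhaustive checks: a square-by-square intersection count over the $240$ squares, and a non-edge scan over the $60$ non-edges. Everything else is derived from these two counts by double counting, or forced by the lemmas.

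\textbf{Step 1: items (i) and (ii).} For (i), check that each listed pair differs in exactly one bit, and that the canonicalised pairs form a set of size $132$.

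For (ii), run the loop of Lemma~\ref{lem:c4} over all $240$ squares and record $|s\cap E|$ for each. Confirming that no square has intersection $0$, $2$ or $4$ gives odd-squareness, and hence $C_4$-freeness.

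The split $\{1{:}30,3{:}210\}$ then follows without further inspection. From $N_1+N_3=240$ and $N_1+3N_3=5\cdot132=660$ we get $N_3=210$ and $N_1=30$. As a cross-check, the odd-squareness itself could be argued structurally. If the file is generated as the positive edge set of the canonical coupling $J$ under some spin vector, then Lemma~\ref{lem:switching} together with the verified frustration of $J$ gives odd-squareness for every spin assignment. I would still treat the direct scan as the certificate.

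\textbf{Step 2: items (iii) and (iv).} Tabulate $\deg_E(v)$ for all $64$ vertices to get $\{3^4,4^{48},5^{12}\}$, and check that the degree sum is $264$. Restrict to $U$ and read off $h=2\deg-6$, which should give the histogram $\{0{:}2,2{:}24,4{:}6\}$.

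Two consistency checks follow. First, $\sum_{v\in U} h(v)=48+24=72$, which must equal $2|E|-192$ by bipartiteness. Second, $\sum_{v\in U} h(v)^2=96+96=192$, which is forced by Lemma~\ref{lem:fieldbound} once (ii) holds. Neither is an independent finding.

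The complementary side is handled the same way, and must again satisfy both identities. By the $\Delta=16$ analysis after Lemma~\ref{lem:quant}, its histogram must be one of the distributions A, B or C. Only the direct count decides that it is C, so this needs the actual degree data and is not forced.

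\textbf{Step 3: item (v), the main obstacle.} This is the one claim not implied by the others. Odd-squareness does not by itself prevent a non-edge from being addable, since a square meeting $E$ in one edge stays $C_4$-free after adding one more.

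For each non-edge $e$, count the squares $s\ni e$ with $|s\cap E|=3$. Each edge lies in exactly $5$ squares, so this is at most $5$. By odd-squareness, every square through $e$ meets $E$ in $1$ or $3$ edges, so the count is exactly the number of new $C_4$s created by adding $e$.

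Summing over the $60$ non-edges double-counts the squares by their unique missing edge, so the total equals $N_3=210$. The claimed distribution $\{2{:}2,3{:}29,4{:}26,5{:}3\}$ does total $4+87+104+15=210$, which is a useful arithmetic check. The distribution itself has to come from the explicit scan, and minimum $\ge2$ gives local maximality.
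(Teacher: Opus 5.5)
Your proposal is correct and follows the paper's own approach: the proposition is a certificate checked directly against \texttt{q6\_odd\_square\_132.json}, the split in (ii) is obtained from odd-squareness and $|E|$ by exactly your double count, and your auxiliary checks ($\sum_{v\in U}h(v)=72$, $\sum_{v\in U}h(v)^2=192$, and the non-edge tally summing to $N_3=210$) are all right. One aside: your remark that the $U^c$ histogram is not forced is too strong in context, because the parity obstruction in Proposition~\ref{prop:q6realizability} rules out A and B on either side of every $132$-edge odd-square set, so (iii) and (iv) are forced for any such set; similarly, bare local maximality in (v) already follows from the Harborth--Nienborg bound $\ex(Q_6,C_4)\le132$, and only the margin distribution in (v) genuinely requires your scan.
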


The distribution in (iv) is one of the exactly three optimal
$h$-distributions enumerated in Section~\ref{sec:oddsquare-exact}, so
the witness attains the upper bound $S\le72$ there and settles
$g(6)=132$. Two further remarks. First, it coincides in multiplicities
with the local-field distribution Derrida et al.\ report for their
$D=6$ configuration, ``$n(4)=6$; $n(2)=24$; $n(0)=2$''~\cite[p.~622]{DPTV79};
we did not reconstruct their configuration and do not claim the two
edge sets agree, only that our independently found witness realises the
same distribution. DPTV79 also report $30$ overfrustrated plaquettes out of
$240$ for their $H_6$ configuration, matching the $30$ squares in the
$1$-edge class of our histogram $\{1{:}30,3{:}210\}$ under the complementary
positive-edge convention; this is a further consistency check, though a forced one: with $132$ positive edges the double count of Proposition~\ref{prop:q6oddsquare}(ii) already fixes $N_1=30$, so this agrees with DPTV rather than confirming anything independent, not an edge-set
identity claim. Second, this witness is \emph{not} the released
132-edge $Q_6$ construction \texttt{q6\_edges\_132.jsonl} of
Section~\ref{sec:q6}: their symmetric difference has size $|E\triangle E'|=62$, and the
latter is $C_4$-free but not odd-square (square-intersection histogram
$\{1\!:\!8,\,2\!:\!44,\,3\!:\!188\}$, and
$\sum_{v\in U}h(v)^2=176\ne192$). Both are $132$-edge $C_4$-free
subgraphs of $Q_6$; only the new one is a witness for $g(6)$
specifically.

\noindent\textbf{Realizability of the other two optimal $n=6$ field
distributions: an exhaustive decision.} The field-bound DP
(Section~\ref{sec:oddsquare-exact}; \texttt{solve\_field\_ip.py}) identifies
exactly three optimal $h$-distributions on $U$ for $n=6$, labelled as there:
distribution~C~$=\{0{:}2,2{:}24,4{:}6\}$ (realised above),
A~$=\{2{:}30,6{:}2\}$, and B~$=\{0{:}1,2{:}27,4{:}3,6{:}1\}$.
DPTV79 (p.~622) noted that solutions besides their first were listed in
Table~I and asked whether any is realisable (``We do not know if the other
solutions \ldots\ are realizable'', in the plural; we rely on that sentence for
the existence of at least two further solutions, not on a reading of the
typeset table itself, which we could not extract). Since DPTV's constraints
are the ones solved here and the exhaustive enumeration returns exactly three
optimal distributions, their ``other solutions'' are necessarily A and B; the
identification is forced by the completeness of the enumeration rather than by
transcription of the table. To be explicit about the evidence: we have read DPTV79 pp.~620--622 directly, but Table~I itself does not extract as text from the journal PDF and we have not transcribed it, so every statement here about its contents rests on DPTV's surrounding prose together with our own enumeration. MPR95 reported, by simulated
annealing, that at $D=6$ only configurations corresponding to the first solution
of DPTV's Table~I are realisable and that ``the other solution of the Diophantine
equation \emph{seems not to} correspond to any spin configuration'' -- a
tentative negative finding, in the singular, with no released configuration or
decision procedure. (Note the grammatical mismatch between the two sources
themselves: DPTV79 write ``the other solutions'' where MPR95 write ``the
other solution''; on the exhaustive count below -- exactly three optima, of
which two are non-realisable -- the plural is the accurate reading, and we
take MPR95's singular as loose phrasing rather than as a competing count.)
Since Proposition~\ref{prop:q6oddsquare} identifies
C with DPTV's first solution by multiplicity, the question is whether A and B
are realisable. Here ``realisable'' prescribes the histogram on the chosen
bipartition side $U$ only; $U^c$ is not constrained. This is the same one-side
convention used by DPTV79 for $n(h)$. Each of A, B, and C has $S=72$, so any
realisable target automatically has $|E|=(192+72)/2=132$. We settle the
one-side decision exhaustively rather than heuristically.

\begin{proposition}[Exhaustive decision]\label{prop:q6realizability}
Among $132$-edge odd-square subgraphs of $Q_6$, distribution~C is realised as
the local-field histogram on $U$ -- throughout this proposition the histogram
is prescribed on the fixed bipartition side $U$ \emph{only}, with $U^{c}$
unconstrained, the same one-side convention as DPTV79's $n(h)$ -- and
distributions A and B are not realised on $U$ by any such subgraph. (By the
translation symmetry noted in the proof the restriction to $U$ costs nothing:
$g(n)$ itself is two-side symmetric, and a distribution realised on one side
is realised on the other.)
\end{proposition}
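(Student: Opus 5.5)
The plan is to split the statement into its positive half and its negative half, and to reduce the negative half to a finite search over spins on one side of the bipartition.

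\emph{Positive half and symmetry.} The positive half is already done. Proposition~\ref{prop:q6oddsquare}(iv) exhibits a $132$-edge odd-square subgraph with $U$-histogram $\{0{:}2,2{:}24,4{:}6\}$, which is distribution~C. For the parenthetical two-side remark, I will use that translation $x\mapsto x\oplus e_0$ is an automorphism of $Q_6$. It preserves squares, hence odd-squareness and $|E|$, and it exchanges $U$ with $U^c$ while carrying local fields along. So any histogram realised on one side is realised on the other, and fixing the side $U$ loses nothing.

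\emph{Reduction to a $2^{32}$ search.} Fix the canonical fully frustrated coupling $J$ of Section~\ref{sec:oddsquareverify}. By Lemma~\ref{lem:switching} and the discussion after it, every odd-square edge set has the form $E_s=\{xy: J(x,y)s(x)s(y)=+1\}$ for some spin configuration $s:\{0,1\}^6\to\{\pm1\}$. For $v\in U$ this gives
\[
h(v)=s(v)\,f_s(v),\qquad f_s(v)=\sum_{i=0}^{5}J(v,v\oplus e_i)\,s(v\oplus e_i).
\]
Here $f_s(v)$ depends only on the restriction $s|_{U^c}$, since all neighbours of $v$ lie in $U^c$. The spins on $U$ can then be chosen independently, one vertex at a time, which gives two facts:
\begin{itemize}
\item the multiset $\{|h(v)|:v\in U\}$ equals $\{|f_s(v)|:v\in U\}$ and is determined by $s|_{U^c}$;
\item for every $v$ with $f_s(v)\neq0$, taking $s(v)=\operatorname{sign} f_s(v)$ makes $h(v)=|f_s(v)|\ge0$.
\end{itemize}
All entries of A, B and C are nonnegative. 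Hence a distribution $D\in\{A,B,C\}$ is realised on $U$ if and only if some $t\in\{\pm1\}^{U^c}$ has $|f_t|$-histogram on $U$ equal to $D$. Realising $D$ forces $S=72$, so $|E|=132$ automatically. The global flip $t\mapsto -t$ leaves $|f_t|$ unchanged, so it suffices to enumerate $2^{31}$ assignments $t$, for example those with one fixed spin equal to $+1$. For each $t$ I will compute the $32$ values $|f_t(v)|$ and test whether the histogram equals A or B. I expect the outcome to be that none does. As a sanity check, the same enumeration should hit C, consistent with Proposition~\ref{prop:q6oddsquare}.

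\emph{Main obstacle.} The main obstacle is making this exhaustive decision both trustworthy and fast, since a bare $2^{31}\times 32\times 6$ loop is heavy in pure Python. I would first enumerate $t$ in Gray-code order. Flipping one spin of $U^c$ changes $f_t$ by $\pm2$ at exactly its six neighbours in $U$, so the histogram counts can be updated in $O(6)$ work per step.

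A further reduction is available by restricting to the two targets early, tracking only the counts of $|f|=6$ and $|f|=0$:
\begin{itemize}
\item A needs exactly two vertices with $|f|=6$ and none with $|f|\in\{0,4\}$;
\item B needs exactly one vertex each with $|f|=6$ and $|f|=0$, and three with $|f|=4$.
\end{itemize}
Symmetries of $J$, namely translations and permutations that preserve $J$ up to a gauge, could cut the space further. I would avoid relying on them for the proof, though, and keep the plain Gray-code enumeration as the certificate, so that correctness rests only on the reduction above.
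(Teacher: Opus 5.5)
Your reduction is correct, and it is the same one the paper uses. With $t=s|_{U^c}$ fixed, $|h(v)|=|f_t(v)|$ is forced and the sign at each $v\in U$ is free. Hence a nonnegative target histogram is realisable on $U$ if and only if some $t$, taken modulo global flip, produces it as the $|f_t|$-histogram. Your positive half and the translation-by-$e_0$ remark also match the paper.

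You diverge in how the negative half is decided. Your plain Gray-code sweep over $2^{31}$ assignments is a valid certificate once it has actually been executed. It is essentially the paper's secondary, ``for completeness'' check, which the paper runs as a depth-first search with counter-overflow pruning and the same pinned spin. That search visits only $655{,}359$ nodes for A and $22{,}129{,}151$ for B.

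The paper's actual proof is a parity obstruction, and your own early filter already contains the key to it. $|f_t(v)|\in\{0,4\}$ holds exactly when an odd number of the six terms $J(v,v\oplus e_i)\,t(v\oplus e_i)$ equal $-1$, and that is a $\mathrm{GF}(2)$-affine function of $t$. So the indicator vector of $\{v\in U:|h(v)|\in\{0,4\}\}$ ranges over a coset $B+\operatorname{Im}(M)$, where $M$ is the $U\times U^{c}$ incidence matrix of $Q_6$ over $\mathrm{GF}(2)$, of rank $16$. Every vector in that coset has weight in $\{8,12,16,20,24\}$, whereas A needs weight $0$ and B needs weight $4$.

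The paper's route buys several things over your sweep:
\begin{itemize}
\item The certificate is a rank computation plus a weight enumeration of $2^{16}$ vectors, rather than about $2\times10^{9}$ steps (which, as you note, is impractical in pure Python).
\item It is independent of search order.
\item It explains why A and B fail.
\item It transfers to $n=8$, where a sweep over $2^{127}$ assignments is hopeless.
\end{itemize}
Your route buys simplicity: it needs no linear algebra beyond the reduction, and the same sweep exhibits C directly.

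One small inconsistency: you say you will track only the counts of $|f|=6$ and $|f|=0$, but your conditions for A and B also involve the count of $|f|=4$. You therefore need a third counter, or equivalently a counter for $|f|\in\{0,4\}$.
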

\begin{proof}
By Lemma~\ref{lem:switching} the odd-square edge sets of $Q_6$ are exactly the
positive-edge sets arising from spin configurations under \emph{any} fixed fully
frustrated coupling, so it suffices to decide the question for the canonical
coupling used throughout; the conclusion is independent of that choice.
Write $s_v=(-1)^{t_v}$ with $t_v\in\{0,1\}$, and let $b(v,w)=1$ exactly when
$J(v,w)=-1$; an edge $vw$ is positive iff $t_v\oplus t_w=b(v,w)$. As $Q_6$ is
bipartite, every neighbour of $v\in U$ lies in $U^{c}$, so
\[
k(v)\;=\;\#\{\,i:\;t_{v\oplus e_i}\oplus b(v,i)=1\,\}
\]
depends only on the $32$ spins on $U^{c}$, and
$\deg(v)=k(v)$ if $t_v=1$, $\deg(v)=6-k(v)$ if $t_v=0$. Hence each $t_v$
($v\in U$) is a free independent choice, and the achievable degree at $v$ is
exactly one of $k(v)$, $6-k(v)$. With $h=2\deg-6$, the achievable nonnegative
field is $h=6-2m(v)$ where $m(v)=\min(k(v),6-k(v))\in\{0,1,2,3\}$, so
$m=0,1,2,3$ correspond to $h=6,4,2,0$ respectively. A target $h$-histogram on
$U$ is therefore realisable if and only if the multiset $\{m(v):v\in U\}$ equals
the induced $m$-multiset for some assignment of the $32$ spins on $U^{c}$.
(This step uses that the target histogram has no negative entry, so that the
achievable field at each $v$ is the single value $6-2m(v)$; A, B and C are all
nonnegative, and by the remark after Lemma~\ref{lem:fieldbound} no optimal
distribution can have one. It also suffices to decide the question on $U$: translation by $e_0$ is an
automorphism of $Q_6$ exchanging $U$ and $U^{c}$, and it carries edge sets to
edge sets preserving both size and odd-squareness, so a distribution realised
on one side is realised on the other.) The
$m$-multisets are $\{0{:}2,2{:}30\}$ for A, $\{0{:}1,1{:}3,2{:}27,3{:}1\}$ for
B, and $\{1{:}6,2{:}24,3{:}2\}$ for C. This is a finite ($2^{32}$) decision problem. For A and B it is settled
without any search at all, by a parity obstruction. Since $\deg(v)$ is
$k(v)$ or $6-k(v)$, we have $k(v)=(h(v)+6)/2$ or $(6-h(v))/2$; these differ by
$h(v)$, which is even, so the \emph{parity} of $k(v)$ is determined by $h(v)$
alone, independently of the free spin $t_v$:
\[
k(v)\equiv\tfrac{h(v)+6}{2}\pmod 2,
\qquad\text{i.e.}\qquad
k(v)\text{ odd}\iff h(v)\in\{0,4\}.
\]
(Explicitly over the even range: $h=0,2,4,6$ give
$\tfrac{h+6}{2}=3,4,5,6$, of which $3$ and $5$ are odd.)
Reducing the definition of $k(v)$ modulo $2$ gives
$k(v)\equiv\sum_i t_{v\oplus e_i}+\sum_i b(v,i)$, so the parity vector
$p\in\mathrm{GF}(2)^{U}$ is the affine image $p=Mt+B$, where $M$ is the
$U\times U^{c}$ bipartite incidence matrix of $Q_6$ over $\mathrm{GF}(2)$ and
$B(v)=\sum_i b(v,i)$. The reachable parity vectors are exactly the coset
$B+\operatorname{Im}(M)$. Here $M$ has rank $16$, and that coset --- $2^{16}$
vectors --- has weight distribution
\[
\{8{:}640,\;12{:}13824,\;16{:}36608,\;20{:}13824,\;24{:}640\},
\]
so every reachable parity vector has weight in $\{8,12,16,20,24\}$. A target
histogram fixes that weight: it is the number of its vertices with
$h\in\{0,4\}$, namely $0$ for A and $1+3=4$ for B. Neither occurs, so A and B
are not realisable --- and this needs no enumeration of $m$-multisets.
Distribution C requires weight $2+6=8$, which does occur, and the witness of
Section~\ref{sec:oddsquareverify} realises it.
For completeness we also settled all three by
depth-first search with counter-overflow pruning, which agrees (the node
counts below are properties of this implementation and its search order,
recorded for orientation only: they carry no reproduction obligation, and a
differently ordered search will visit different counts; the parity
obstruction above is the search-order-independent proof): the search assigns
the $32$ $U^{c}$ spins in a fixed index order and keeps one running counter
per value $m\in\{0,1,2,3\}$; a $v\in U$ has $m(v)$ determined as soon as the
last of its six neighbours in that order is assigned, at which point $m(v)$
is tallied, and the branch is cut the moment any counter exceeds the
target's multiplicity for that value. Global spin
flip fixes every $h$, so the first $U^{c}$ spin may be pinned without
loss of generality (each sign configuration has exactly one lift with the
pinned value). The search finds a
realisation for C ($501$ nodes visited) and exhausts the space for A
($655{,}359$ nodes) and B ($22{,}129{,}151$ nodes) without a realisation;
Without the symmetry reduction the two non-realisable searches, which exhaust
the space, visit exactly twice as many nodes ($1\,310\,718$ and $44\,258\,302$),
since the global flip pairs off branches; the realisable case~C is not doubled,
because the search returns at the first witness found and may find it on either
side of the pairing. The released script reports the doubled figure as an
expectation for the exhausted cases and recomputes both on request
(\texttt{-{}-full}).
\end{proof}

The parity obstruction is released as
\texttt{q6\_parity\_obstruction.py} (rank, coset weight enumeration and the
three targets; standard library only, under a second), and the exhaustive
decision as \texttt{q6\_decide\_realizability.py} (also standard library only,
about twenty seconds).
Its correctness is anchored at both ends: the reduction is checked against the
released witness (recomputing $m$ from the spins in
\texttt{q6\_odd\_square\_132.json} gives $\{1{:}6,2{:}24,3{:}2\}$, i.e.\ C), and
the C branch of the search reconstructs a full spin vector whose recomputed
$h$-histogram and edge count are $\{0{:}2,2{:}24,4{:}6\}$ and $132$.
Proposition~\ref{prop:q6realizability} thus closes the question DPTV79 left open
in 1979 and upgrades MPR95's tentative ``seems not to'' to a proof --
for both of the non-realisable distributions, where MPR95 spoke of one
(DPTV79's own sentence is plural: ``We do not know if the other solutions
given in table 1 for $d=6$ are realizable'', p.~622). We stress that the
mathematical content of the proposition --- of the three optimal
distributions, exactly one is realisable on a fixed side --- stands
independently of any reading of DPTV79's Table~I: that reading (obtained,
as disclosed in Section~\ref{sec:oddsquare}, from the surrounding prose
and the completeness of our own enumeration rather than from a direct
transcription) enters only the historical identification of \emph{which}
question is thereby closed, not the proposition itself.

Earlier revisions of this manuscript reported instead a heuristic search
(simulated annealing on the $L_1$ distance between the current and target
$h$-histograms; 40 seeds for A and 20 for B at $300{,}000$ iterations each,
all plateauing at distance $16$ and $8$ respectively, against a positive control
reaching C in $20/20$ runs at $60{,}000$ iterations). What the release contains is not the original runs: the bundled
\texttt{q6\_other\_distributions.py} and its CSV/JSON are a re-execution of the
same kind of search under a seed protocol newly specified for this package, as
the \texttt{provenance\_note} in both summary files states. They reproduce the
numbers earlier revisions reported but are not the historical logs, which were
not archived. They are in any case superseded by
Proposition~\ref{prop:q6realizability} and are no longer offered as evidence:
a plateau is not a proof, and the asymmetry in iteration budget between the
targets and the control makes the phrase ``the same search'' inexact. The two
computations also range over different spaces: the annealing objective allows
intermediate configurations with negative local fields (the archived logs
record best-histogram entries at $h=-4$), whereas the reduction of
Proposition~\ref{prop:q6realizability} works with the nonnegative field
$6-2m(v)$ throughout. They agree on A and B, but they are not the same
computation restricted.

\textbf{$Q_7$, $g(7)=304$.} The 304-edge odd-square value coincides with our
originally announced $Q_7$ lower bound (Corollary~\ref{thm:main}); of the
$19\,866$ solutions obtained via the SA-seed-plus-collector pipeline (Section~\ref{sec:q7}),
389 are themselves odd-square witnesses of $g(7)=304$
(Section~\ref{sec:structure}), independently confirming reachability of this
value without reference to~\cite{DPTV79}.

\textbf{$Q_8$, $g(8)=682$.} We obtained a 682-edge odd-square witness
for $Q_8$ from an explicit $256$-bit spin configuration together with the
canonical fully frustrated coupling
$J(x,\mathrm{dim})=(-1)^{\mathrm{popcount}(x\,\&\,(2^{\mathrm{dim}}-1))}$
(the coupling MPR95 write explicitly as their Eq.~4;
one coupling convention realising a fully frustrated signature; see
Section~\ref{sec:oddsquare} supplementary script \texttt{generate\_q8\_682.py}).
The script is dependency-free (Python standard library only),
recomputes the edge set from the spin configuration, and asserts every
structural quantity below against the recomputed data before writing the
certificate; it does not take any of these quantities as input.
\emph{Provenance of the spin configuration.} The 256-bit configuration
was derived by the author on 17--18 August 2026, using the canonical
fully frustrated coupling of Marinari, Parisi, and Ritort~\cite{MPR95},
as recorded in dated correspondence with S.~Lai, who independently
reconstructed the identical 682-edge set from the shared spin vector and
cross-checked all 1,792 squares, the degree distribution, and the
local-field distribution before this material was incorporated into the
manuscript. This provenance record is a first-person, dated account
(the correspondence itself), not an independently reproducible
computational log: no search seed, intermediate search records, or
derivation script for the spin configuration itself is included in the
released materials, only the scripts that regenerate and verify the
edge set \emph{from} the given spin configuration
(Section~\ref{sec:oddsquare}). We do not claim this configuration is
identical to any specific configuration MPR may have used internally,
since they did not publish one; it is an independently derived witness
attaining the same bound.

\medskip\noindent\emph{Reproducibility and the second witness.} That leaves an
asymmetry, and it should be stated with the two files kept apart: the
headline certificate, \texttt{q8\_odd\_square\_682.json}, remains the one
released artefact whose \emph{derivation} cannot be re-executed, and this
revision does \emph{not} close that provenance gap. What it does close is
the reproducibility of the \emph{existence} claim, through a second,
independently regenerable $682$-edge witness:
\texttt{q8\_A\_recover.py} is a fixed-seed search that finds, with no input from
the released witness, a spin configuration realising the same optimal field
distribution (seed $90008$, hit at iteration $207\,579$ \emph{at the
script's default iteration budget}: the budget parameter enters the annealing
temperature schedule, so this exact hit -- and the byte-identical output,
which \texttt{reproduce\_core.py} now asserts by SHA-256 -- is conditional
on the default $250\,000$-iteration setting, a different budget yielding a
different, equally valid witness, and is scoped to the reference platform
class: the acceptance step evaluates a floating-point exponential, so the
byte-identity is an observed stability of CPython on IEEE-754 hardware with
a standard \texttt{libm} (reproduced in our CPython~3.12 environment and,
via an RNG-faithful independent re-implementation, by a reviewer), not a
universal cross-platform guarantee --- and it concerns only the byte-level
identity of the \emph{regenerated} file: the existence of a $682$-edge
odd-square subgraph is carried platform-independently by the released
static certificate and its dependency-free verification; $682$ edges,
$h|_U=\{0{:}1,2{:}84,4{:}43\}$, square histogram $\{1{:}301,3{:}1491\}$,
released as \texttt{q8\_A\_witness.json}). It is a different edge set
from \texttt{q8\_odd\_square\_682.json}, as expected. So the division of
labour is: the existence of a $682$-edge odd-square subgraph is
end-to-end reproducible via \texttt{q8\_A\_witness.json}, while the
headline artefact \texttt{q8\_odd\_square\_682.json} keeps its
dated-correspondence provenance and is vouched for by verification, not
by regeneration.

\begin{proposition}\label{prop:q8oddsquare}
The witness edge set of \texttt{q8\_odd\_square\_682.json} satisfies:
\begin{enumerate}[label=(\roman*)]
  \item $682$ edges, spanning all $256$ vertices of $Q_8$.
  \item Square-intersection histogram $\{1\!:\!301,\;3\!:\!1491\}$ over
    all $1\,792$ squares (odd-square condition holds everywhere; zero
    squares meet the edge set in $0$, $2$, or $4$ edges).
  \item Degree sequence $\{4^2,\,5^{168},\,6^{86}\}$; degree sum
    $1364=2\cdot682$.
  \item Local-field histogram $\{0\!:\!2,\,2\!:\!168,\,4\!:\!86\}$ over
    all $256$ vertices, where
    the local field at a vertex is (positive degree) $-$ (negative degree)
    under the signed-graph correspondence above; restricted to $U$
    specifically (the $128$ vertices summed over in
    Lemma~\ref{lem:fieldbound} and the upper-bound derivation of
    Section~\ref{sec:oddsquare-exact}), the histogram is
    $\{0\!:\!1,\,2\!:\!84,\,4\!:\!43\}$, giving
    $\sum_{v\in U}h(v)=2\cdot84+4\cdot43=340$ directly.
  \item Local maximality: every one of the $342$ non-edges of $Q_8$
    creates at least $3$ new $C_4$s upon addition (distribution
    $\{3\!:\!41,\,4\!:\!159,\,5\!:\!120,\,6\!:\!22\}$), a strictly stronger
    local-maximality margin than the originally announced 680-edge
    Solution~B of Section~\ref{sec:q8}, for which 2 of 344 non-edges
    created only 1 or 2 new $C_4$s.
\end{enumerate}
\end{proposition}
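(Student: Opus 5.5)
The statement is a finite certificate claim about one explicit edge set, so the plan is to verify each item mechanically from the released spin configuration. Where possible, the check should be reduced to facts already proved in the paper, so that most of the computation becomes a consistency check rather than an independent finding.

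\emph{Step 1: rebuild the edge set.} I would regenerate the edge set from the 256-bit spin vector $s$ and the canonical coupling $J(x,\mathrm{dim})=(-1)^{\mathrm{popcount}(x\,\&\,(2^{\mathrm{dim}}-1))}$. Here $E=\{xy: J(x,y)s_xs_y=+1\}$.

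\emph{Step 2: item (ii).} First I would check, by direct enumeration of the $1\,792$ squares of Lemma~\ref{lem:c4}, that $J$ has product $-1$ on every square. The key computation is on the square with directions $i<j$ at base $b$. The couplings in direction $i$ differ between $b$ and $b\oplus e_j$ by the factor $(-1)^{[j<i]}=1$, while those in direction $j$ differ between $b$ and $b\oplus e_i$ by $(-1)^{[i<j]}=-1$. So every square is frustrated, and this can in fact be proved by hand. Multiplying by $s_xs_y$ does not change any square product, so $E$ is odd-square and in particular $C_4$-free. The split $\{1{:}301,3{:}1491\}$ then follows from the double count in Proposition~\ref{prop:q6oddsquare}(ii):
\[
N_3=\bigl(7\cdot682-1792\bigr)/2=1491, \qquad N_1=1792-1491=301.
\]

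\emph{Step 3: items (i), (iii) and (iv).} Item (i) and the degree sequence (iii) come from counting edges and degrees. Item (iv) follows immediately from (iii) via $h=2\deg-8$, giving $\{0{:}2,2{:}168,4{:}86\}$ overall. The restriction to $U$ must be computed, and I would cross-check it against Lemma~\ref{lem:fieldbound}:
\[
84\cdot4+43\cdot16=1024=8\cdot2^7 .
\]
I would also check it against the $n=8$ analysis in Section~\ref{sec:oddsquare-exact}: $S=340$ forces exactly one vertex of $U$ to have $h\in\{0,6\}$, which matches the single $0$. Finally, bipartiteness gives $\sum_{v\in U}\deg(v)=682$, which serves as a further sanity check.

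\emph{Step 4: item (v), the main obstacle.} This is the only genuinely computational item; it has no structural shortcut. For each of the $1024-682=342$ non-edges $e$ in direction $i$, I would count the squares through $e$ whose other three edges all lie in $E$. Each edge lies in exactly $7$ squares, so the count is in $\{0,\dots,7\}$. Since $E$ is odd-square, each such square currently meets $E$ in $1$ or $3$ edges, and adding $e$ completes a $C_4$ exactly for the $3$-edge squares. The count therefore equals the number of squares through $e$ lying in the $3$-class, and tallying over all non-edges should reproduce $\{3{:}41,4{:}159,5{:}120,6{:}22\}$. The comparison with Solution~B is then quoted directly from Proposition~\ref{prop:q8struct}(ii).

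I would additionally double-count as a check. Each $3$-class square has exactly one non-edge, and each $1$-class square has three. So $\sum_e(\text{new }C_4\text{s})$ must equal $N_3=1491$, and $\sum_e(7-\text{new})$ must equal $3N_1=903$; their sum is $7\cdot342=2394$. Indeed
\[
3\cdot41+4\cdot159+5\cdot120+6\cdot22=1491,
\]
which confirms that the distribution in (v) is internally consistent.
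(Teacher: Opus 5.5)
Your proposal is correct and follows the paper's route. The paper also regenerates the edge set from the $256$-bit spin vector under the canonical coupling (\texttt{generate\_q8\_682.py}, which asserts every item). It then re-checks all quantities by enumerating the $1\,792$ squares directly against the released edge list, and it notes that the split in (ii) is forced by the double count of Proposition~\ref{prop:q6oddsquare}(ii). Your by-hand proof that $J$ frustrates every square is a nice addition, but since the proposition concerns the released file, you still need to confirm that your regenerated set equals its edge list, which is exactly what the paper's independent enumeration does. Also, the $U$-side histogram in (iv) need not be computed: $h\in\{0,2,4\}$, $\sum_{v\in U}h=340$ and $\sum_{v\in U}h^2=1024$ already force $\{0{:}1,2{:}84,4{:}43\}$.
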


All quantities in Proposition~\ref{prop:q8oddsquare} were recomputed
independently of the reconstruction script by direct enumeration of all
$1\,792$ squares against the released edge list, using the strict
definition of $C_4$-freeness from Lemma~\ref{lem:c4} (no square may
have all four edges present); this independent check found zero violations
and is fully reproducible from the released files alone.
The reconstruction, its SHA-256 checksum, and an independent audit are
additionally reported to have been cross-checked against S.~Lai's
separately produced implementation, with full agreement; this
cross-check is recorded as a dated correspondence account (as in
Section~\ref{sec:oddsquareverify}), not as an independently verifiable
artefact, since the correspondence, Lai's implementation, and its
output are not included in the released materials. The regenerable
second witness \texttt{q8\_A\_witness.json} (seed $90008$;
Section~\ref{sec:oddsquareverify}) shares items (i)--(iii) and the full
and $U$-restricted histograms of item (iv) verbatim; its
local-maximality margin distribution is
$\{3{:}42,\,4{:}151,\,5{:}133,\,6{:}16\}$ over the same $342$
non-edges. These were recomputed here by the same direct square
enumeration; in addition, \texttt{verify.py} now checks \emph{both}
$682$-edge certificates (odd-square condition, edge count, and the
respective margin distributions), and \texttt{reproduce\_core.py}
asserts the second witness's byte-level regeneration by SHA-256.

\noindent\textbf{Second $n=8$ witness.}
The local-field distribution $\{0{:}1,2{:}84,4{:}43\}$ above is one
of two optimal field distributions for $n=8$ returned by the exhaustive
integer programme \texttt{solve\_field\_ip.py}. The other,
$\{2{:}87,4{:}40,6{:}1\}$, is realised by additional witnesses found
by the same canonical fully frustrated coupling together with a targeted
simulated annealing whose objective is the $L_1$ distance between the
current $h$-histogram on $U$ and the target $\{2{:}87,4{:}40,6{:}1\}$.
Over $20$ seeds $\times$ $300{,}000$ iterations, $2$ runs reached the
target ($L_1=0$) at seeds $20261207$ (iteration $260{,}621$) and
$20261218$ (iteration $276{,}842$); these seed values are arbitrary RNG
integers and are not dates, despite their form. To confirm that nothing
depends on that choice, the same search re-run from the unrelated base $90200$
also reaches the target (seed $90210$, iteration $279\,646$). The remaining
$17$ stopped at $L_1=6$ and $1$ at $L_1=8$. Failed intermediate states can contain negative local fields (for
example $h=-6$ in one recorded run), because the annealing objective explores
non-maximal configurations; Lemma~\ref{lem:nonneg} applies only to a maximum
odd-square edge set and is not contradicted by such intermediates. Both
witnesses are released in
\texttt{q8\_B\_witnesses.json} (spins and edges), regenerable via
\texttt{q8\_B\_recover.py}, and verified by the dependency-free
\texttt{verify\_q8\_B\_witnesses.py}. The two successful runs can be checked directly from the stored certificates
without any search; the other eighteen are recorded only by their plateau
distances, so confirming those requires rerunning the protocol, which
\texttt{q8\_B\_recover.py} supports (it is seeded per trial and takes
\texttt{-{}-seeds} and \texttt{-{}-iters}). The 20-run success/plateau tally is provenance information rather than part
of the theorem's evidence and is not rerun by the default one-command
reproduction. Nothing in the claim depends on it --- the assertion is
existence, and two certificates carry it:

\begin{proposition}\label{prop:q8Boddsquare}
Each witness in \texttt{q8\_B\_witnesses.json} satisfies:
\begin{enumerate}[label=(\roman*)]
  \item $682$ edges, all valid $Q_8$ edges, no repetitions.
  \item Square histogram $\{1{:}301,\,3{:}1491\}$ over all
    $1{,}792$ squares (odd-square; $C_4$-free).
  \item $h$-histogram $\{2{:}87,\,4{:}40,\,6{:}1\}$ on $U$,
    $\sum h=340$, $\sum h^2=1024$ (\checkmark). On the opposite side
    $U^c$ the histogram is $\{0{:}1,2{:}84,4{:}43\}$, i.e.\ the other
    optimal $n=8$ field distribution. Thus each of these single edge sets
    realises the two optimal histograms on its two bipartition classes.
  \item Local maximality with margin at least $3$.  For the witnesses with
    seeds $20261207$ and $20261218$, respectively, the non-edge violation
    distributions are
    \[
      \{3{:}34,4{:}169,5{:}121,6{:}18\}
      \quad\text{and}\quad
      \{3{:}41,4{:}154,5{:}130,6{:}17\}.
    \]
  \item Distinct spin vectors and edge sets, and distinct from
    \texttt{q8\_odd\_square\_682.json}.
\end{enumerate}
\end{proposition}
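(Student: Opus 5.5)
The statement is a certificate-level claim about two explicit edge sets, so I would prove it by direct finite verification, organised so that the only genuinely computational step is the square enumeration. First, regenerate each witness from its stored spin vector $s$: the edge set is $E_s=\{(x,x\oplus e_d): J(x,d)s_xs_{x\oplus e_d}=+1\}$, where $J$ is the canonical coupling $J(x,\mathrm{dim})=(-1)^{\mathrm{popcount}(x\,\&\,(2^{\mathrm{dim}}-1))}$. I would check that the stored edge list equals $E_s$, which gives validity and no repetitions for item (i). I would then confirm that $J$ is fully frustrated on all $1\,792$ squares. A square in directions $i<j$ at base $b$ has coupling product $(-1)^{b_i+(b_i\oplus 1)}=-1$ independently of $b$, since only the $j$-edges see bit $i$. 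By the switching correspondence of Lemma~\ref{lem:switching}, $E_s$ is therefore odd-square for every $s$. This already yields the odd-square half of (ii) without enumeration, and $C_4$-freeness follows.

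Next, $|E_s|=682$ is a single count. Once (i) and odd-squareness hold, the histogram in (ii) is forced by the double count of Proposition~\ref{prop:q6oddsquare}(ii): $N_3=(7\cdot682-1792)/2=1491$ and $N_1=301$. For (iii), I would compute $h(v)=2\deg_E(v)-8$ on $U$ and on $U^c$ and read off the two histograms. As a consistency check, $\sum_{v\in U}h=340=2|E|-1024$ is automatic by bipartiteness, and $\sum h^2=1024$ must hold by Lemma~\ref{lem:fieldbound}. On $U$ the $\Delta=8$ analysis of Section~\ref{sec:oddsquare-exact} allows only the two optimal distributions. So the substantive computation is simply that the $U$-side tally equals $\{2{:}87,4{:}40,6{:}1\}$ while the $U^c$-side tally equals the other distribution.

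For (iv), for each of the $342$ non-edges $f$, I would count the squares through $f$ whose other three edges all lie in $E$. Each edge lies in $7$ squares, so only $7$ squares need testing per non-edge. For an odd-square set, every such square meets $E$ in exactly $3$ edges before $f$ is added. The distribution is then tallied and compared with the stated multisets; the minimum $3$ gives local maximality with margin. For (v), I would compare the spin vectors modulo global flip and the edge sets pairwise, including against \texttt{q8\_odd\_square\_682.json}. Distinctness from that file is immediate once its $U$-histogram $\{0{:}1,2{:}84,4{:}43\}$ differs from these witnesses' $U$-histogram.

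None of these steps is conceptually hard. The main obstacle is item (iv), together with the exact histograms in (iii): these are not forced by any earlier lemma and rest purely on exhaustive enumeration against the released data. I would therefore perform them with the dependency-free verifier \texttt{verify\_q8\_B\_witnesses.py}, cross-checked against the independent square enumeration of Lemma~\ref{lem:c4}.
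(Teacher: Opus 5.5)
Your proposal is correct. Like the paper, it ultimately rests on a finite check of the released data, but you organise that check differently.

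The paper supports this proposition by direct machine verification of the stored edge lists (\texttt{verify\_q8\_B\_witnesses.py}). That script enumerates squares against the edge set itself, so odd-squareness and the histograms are certified on the certificate actually used, independently of the spin vector and the coupling convention.

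You instead prove that the canonical $J$ is fully frustrated. Your parity computation is right: the two $i$-edges have equal couplings because bit $j$ lies outside the mask $2^i-1$, while the two $j$-edges differ because bit $i$ lies inside $2^j-1$. Consequently, once the stored list is shown to equal $E_s$, the following become consequences rather than data:
\begin{itemize}
\item odd-squareness;
\item the histogram $\{1{:}301,3{:}1491\}$, via $N_1+3N_3=7\cdot682$;
\item $\sum_U h=340$ and $\sum_U h^2=1024$.
\end{itemize}
The $\Delta=8$ analysis then reduces (iii) to one bit per side: whether the lone exceptional vertex has $h=6$ or $h=0$. Item (v) against \texttt{q8\_odd\_square\_682.json} follows from the differing $U$-histograms.

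Your route cleanly separates forced from empirical content. Only $E_{\mathrm{stored}}=E_s$, $|E|=682$, the identity of the exceptional vertex on each side, and the tallies in (iv) are genuinely data-dependent. The paper's route buys independence from the coupling formula and the stored spins.

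Two small points. First, what you need is the easy direction of switching invariance: each vertex of a square lies on exactly two of its edges, so the $s_xs_y$ factors cancel in the square product. Lemma~\ref{lem:switching} proves the converse. Second, set equality with $E_s$ does not by itself exclude repeated entries in the stored list, so pair it with a length check.

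You could also note that (iv) is partly forced. Each $3$-edge square has exactly one missing edge, so the violation counts must sum to $N_3=1491$ over the $342$ non-edges. Both stated distributions pass: $3\cdot34+4\cdot169+5\cdot121+6\cdot18=1491$ and $3\cdot41+4\cdot154+5\cdot130+6\cdot17=1491$.
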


Thus $n=8$ differs from $n=6$: every optimal field distribution for $n=8$ is
realisable by an explicit odd-square $682$-edge subgraph of $Q_8$, while for
$n=6$ only distribution~C is (Proposition~\ref{prop:q6realizability}). The two
halves of that contrast rest on different kinds of evidence, and the asymmetry
should be stated: at $n=8$ both realisations are exhibited constructively,
whereas at $n=6$ non-realisability is \emph{proved} exhaustively. Thus
``every optimal distribution is realisable'' at $n=8$ means constructive
existence for each of the two DP distributions, not an exhaustive decision
over all $n=8$ spin assignments of the kind performed at $n=6$. We have not
attempted the analogous decision at $n=8$. The obstacle is not
the raw size of the space --- the reduction gives $2^{127}$ after pinning one
spin, and at $n=6$ a $2^{32}$ space was settled in between $501$ and
$2.2\times10^{7}$ nodes --- but that the counter-overflow pruning which makes
the $n=6$ search collapse has no guarantee of biting as hard at $n=8$. The
parity obstruction, however, transfers and explains part of the contrast: the
rank of the $U\times U^{c}$ map over $\mathrm{GF}(2)$ is
$1,4,4,16,16,64,64,256$ for $n=2,\ldots,9$ (the full sequence is
recomputed and asserted by \texttt{q6\_parity\_obstruction.py}, which runs
in the default reproducibility path), so it is \emph{full} exactly when
$n$ is odd. There is thus no parity obstruction at all at $n=7$, and one at
both $n=6$ and $n=8$; at $n=8$ the criterion reads $k(v)$ odd
$\iff h(v)\in\{2,6\}$, so the two optimal distributions require weights $88$
and $84$, and the released witnesses show both are attained. This is a second
axis, independent of the field identity, along which $n=7$ differs from its
even neighbours.

Table~\ref{tab:oddsquare} compares the odd-square reconstructions with the SA
witnesses from Sections~\ref{sec:q7} and~\ref{sec:q8}.

\begin{table}[ht]
\centering\small
\caption{Odd-square reconstructions compared with the simulated-annealing
  witnesses of Sections~\ref{sec:q7}--\ref{sec:q8}.}
\label{tab:oddsquare}\medskip
\begin{tabular}{@{}lp{4.3cm}p{4.3cm}@{}}
\toprule
Parameter & $Q_7$ odd-square & $Q_8$ odd-square\\
\midrule
Edges (value of $g(n)$) & 304 & 682\\
Source & one of 389 verified SA sols. & explicit certificate at~\cite{MPR95}'s bound\\
Non-odd-square comparison & 19\,477 catalogue solutions (\ref{sec:oddsquareaudit}) & 680 (Solution~B,~\ref{sec:q8})\\
Improvement over that witness & --- & $+2$\\
\bottomrule
\end{tabular}
\end{table}

\section{Structural Classification of \texorpdfstring{$Q_7$}{Q7} Solutions}\label{sec:structure}
\noindent\textbf{Scope.} Every universal statement in this section is over
the released catalogue of $19\,866$ labelled $304$-edge solutions unless
explicitly stated otherwise. The catalogue is demonstrably not exhaustive of
all labelled solutions --- the $180$ orbits it meets already contain
$34\,227\,200$ of them (Proposition~\ref{prop:orbits}) --- while whether it
meets \emph{every} $\mathrm{Aut}(Q_7)$-orbit is unknown, since the total
number of orbits is not known (Open Problem~\ref{op:orbits}). Either way,
none of the shared structural properties below is claimed for every possible
$304$-edge $C_4$-free subgraph of $Q_7$. This section is an independent
contribution about that catalogue: apart from the odd-square audit of
Section~\ref{sec:oddsquareaudit} (the $389$), its results neither depend
on nor feed into Theorem~\ref{thm:oddsquare} and Corollary~\ref{thm:main}.


\subsection{Dimension profiles and the twenty types}\label{sec:profiles}

\begin{definition}[Dimension profile]
For $G\subseteq Q_7$, let $e_i(G)=|\{uv\in E(G):u\oplus v=2^i\}|$.
The \emph{dimension profile} is the sorted tuple
$\pi(G)=(e_{\sigma(0)}\ge\cdots\ge e_{\sigma(6)})$.
\end{definition}

The dimension profile is invariant under coordinate permutations (a subgroup
of $\mathrm{Aut}(Q_7)$ of order $7!=5040$), giving a coarse but computable
classification.

\begin{proposition}\label{prop:twenty}
Among the $19\,866$ solutions, the dimension profile takes exactly
$\mathbf{20}$ distinct values.
\end{proposition}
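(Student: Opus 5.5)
This is a finite computational statement about a fixed released dataset, so the proof I would give is a deterministic exhaustive computation. I would also attach enough structural bookkeeping that a reader can audit the count rather than merely trust it.

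\emph{Computing the profiles.} I would load all $19\,866$ edge sets from \texttt{q7\_edges\_304.jsonl.part1--3}. Before using them, I would re-validate each one with the Lemma~\ref{lem:c4} algorithm: every record should be a $304$-edge $C_4$-free subgraph of $Q_7$, and the records should be pairwise distinct.

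For each solution $G$ I would then compute the direction counts $e_i(G)$, $i=0,\dots,6$. An edge $uv$ lies in direction $i$ exactly when $u\oplus v=2^i$, so this is a single pass over the $304$ edges. I would sort the vector in non-increasing order to get $\pi(G)$ and insert it into a dictionary keyed by the tuple. The statement is then that this dictionary ends with exactly $20$ keys.

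\emph{Consistency checks.} The raw count is only as trustworthy as the code that produced it, so I would check several necessary conditions.
\begin{itemize}
\item Every profile must sum to $304$.
\item Each $e_i$ is at most $2^{6}=64$, the number of $Q_7$ edges in a fixed direction.
\item By the degree sequence $\{4^{32},5^{96}\}$ of Proposition~\ref{prop:q7} and Proposition~\ref{prop:rigid}, each $e_i$ is forced into a narrow window. I would record the multiplicity of each of the $20$ tuples and check that the multiplicities sum to $19\,866$.
\end{itemize}
I would also cross-check with a second, independent implementation in the spirit of \texttt{cross\_verify.py}. It would compute the profile differently: for each $i$, count the pairs $(v,v\oplus e_i)$ with $v_i=0$ that lie in the edge hash set. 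It must return the same $20$-element multiset with the same multiplicities.

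\emph{Main obstacle.} The arithmetic is trivial, so the real difficulty is scope and robustness. The claim is about exactly this released catalogue, and a silent parsing or canonicalisation error could merge or split types. I would guard against this in three ways.
\begin{itemize}
\item Tie the input to the SHA-256 manifest \path{ORIGINAL_DATA_SHA256SUMS.txt}.
\item Confirm that the stored \texttt{hash} fields reproduce.
\item Check invariance. The dimension profile is invariant under coordinate permutations, and also under XOR translations, which map direction-$i$ edges to direction-$i$ edges. So I would apply random elements of $\mathrm{Aut}(Q_7)$ to a sample of solutions and confirm that each profile is unchanged. This verifies that the classification key is well defined, and that $20$ counts genuine types rather than artefacts of labelling.
\end{itemize}
The proposition is then exactly the output of \texttt{analyze\_q7\_structure.py}, which the paper states reproduces the $20$ direction profiles.
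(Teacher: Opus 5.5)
Your proposal is correct and takes essentially the paper's own route. The proposition is a finite fact about the released catalogue, settled by computing each solution's sorted per-direction edge counts (as \texttt{analyze\_q7\_structure.py} does) and counting the distinct tuples, whose multiplicities sum to $19\,866$ as in Table~\ref{tab:types}.

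One side remark is mis-justified, though it does not affect the count. The degree sequence $\{4^{32},5^{96}\}$ alone does not confine the $e_i$ to a narrow window. A window such as $40\le e_i\le 54$ comes instead from $C_4$-freeness: $e_i+e_j\le 3\cdot 32=96$ over the $(i,j)$-squares, and $\ex(Q_6,C_4)\le 132$ applied to the two halves $v_i=0$ and $v_i=1$.
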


Table~\ref{tab:types} lists all 20 types with frequencies and structural invariants.

\begin{table}[ht]
\centering\small
\caption{The 20 dimension-profile types occurring among the released
  $19\,866$ 304-edge $C_4$-free subgraphs of $Q_7$, sorted by decreasing
  frequency. Each profile is the seven
  per-direction edge counts written in non-increasing order -- a canonical
  form independent of how the seven directions are labelled, which is what
  makes profiles $\mathrm{Aut}(Q_7)$-invariants and lets the orbit
  discussion below refer to them directly.
  $H$: Shannon entropy of normalised profile (bits).}
\label{tab:types}\medskip
\begin{tabular}{clrrl}
\toprule
Rank & Profile $\pi$ & Solutions & $H$ & $\lambda_1$ (type mean)\\
\midrule
 1 & $[44,44,44,44,43,43,42]$ & 3155 & 2.8072 & 4.7869\\
 2 & $[45,45,45,43,43,42,41]$ & 2913 & 2.8065 & 4.7880\\
 3 & $[45,45,45,43,43,43,40]$ & 2200 & 2.8063 & 4.7870\\
 4 & $[44,44,44,44,44,43,41]$ & 2116 & 2.8069 & 4.7875\\
 5 & $[46,46,46,42,42,42,40]$ & 1756 & 2.8053 & 4.7874\\
 6 & $[46,46,46,42,42,41,41]$ & 1181 & 2.8054 & 4.7887\\
 7 & $[44,44,44,44,44,44,40]$ & 1085 & 2.8066 & 4.7868\\
 8 & $[45,45,45,43,42,42,42]$ & 1064 & 2.8066 & 4.7877\\
 9 & $[45,45,44,43,43,43,41]$ &  974 & 2.8067 & 4.7868\\
10 & $[44,44,44,44,44,42,42]$ &  931 & 2.8070 & 4.7866\\
11 & $[47,47,47,41,41,41,40]$ &  902 & 2.8037 & 4.7880\\
12 & $[45,45,43,43,43,43,42]$ &  439 & 2.8069 & 4.7879\\
13 & $[45,44,44,43,43,43,42]$ &  307 & 2.8070 & 4.7880\\
14 & $[46,45,45,42,42,42,42]$ &  236 & 2.8063 & 4.7891\\
15 & $[44,44,44,43,43,43,43]$ &  163 & 2.8073 & 4.7866\\
16 & $[47,47,44,43,41,41,41]$ &  153 & 2.8050 & 4.7875\\
17 & $[46,46,44,42,42,42,42]$ &  107 & 2.8062 & 4.7888\\
18 & $[48,48,48,40,40,40,40]$ &  101 & 2.8014 & 4.7881\\
19 & $[46,46,43,43,42,42,42]$ &   44 & 2.8063 & 4.7884\\
20 & $[46,46,45,42,42,42,41]$ &   39 & 2.8058 & 4.7878\\
\midrule
   & \textbf{Total}           & \textbf{19866}\\
\bottomrule
\end{tabular}
\end{table}

Observations: (1) the profile-wise \emph{mean} values of $\lambda_1$ shown
in the table are rounded independently to four decimal places and lie in
$[4.7866,4.7891]$ across all 20 types (this is distinct from, and narrower than,
the exhaustive range
$[4.78543,4.79129]$ over all $19\,866$ individual solutions reported
in Proposition~\ref{prop:rigid} below).
(2) $H$ ranges from $2.801$ to $2.807$, all near $\log_27\approx2.807$.
(3) All 101 Type-18 solutions have a genuine $\mathrm{Aut}(Q_7)$-automorphism
(coordinate permutation combined with a bitwise XOR, i.e.\ the full
automorphism group $\mathrm{Aut}(Q_7)\cong(\mathbb{Z}_2)^7\rtimes S_7$
of $Q_7$ itself -- a proper subgroup of the full affine group
$\mathrm{AGL}(7,2)$ over $\mathbb{F}_2^7$, since only coordinate
\emph{permutations} are allowed, not general linear maps -- not
merely a coordinate permutation) that nontrivially permutes at least
two of the three tied $48$-edge directions; of these, $46/101$ have a
genuine order-$3$ automorphism cyclically permuting all three
directions. (An earlier check that considered only pure coordinate
permutations, without combining them with the XOR/translation part of
$\mathrm{Aut}(Q_7)$, incorrectly found no such automorphisms in a
20-solution sample; that check was in error and has been redone
exhaustively over all 101 solutions.)

\subsection{Shared structural properties}

\begin{proposition}\label{prop:rigid}
Every solution in the $19\,866$ satisfies:
\begin{enumerate}[label=(\roman*)]
  \item Degree sequence $\{4^{32},5^{96}\}$.
  \item $\lambda_1$ ranges over $[4.78543,4.79129]$ across all $19\,866$
    solutions (exhaustively computed; not a single shared three-decimal
    value), with $\lambda_1+\lambda_{\min}=0$ throughout, automatic for any
    subgraph of the bipartite $Q_7$ (sanity check, not an independent
    finding). The type-wise means of Table~\ref{tab:types} are reported to four decimal
    places, ranging from $4.7866$ to $4.7891$.
  \item $\mathrm{Tr}(A^4)=5216$. (Like the square-intersection split of
    Proposition~\ref{prop:q6oddsquare}(ii), this is forced rather than
    independent: in a $C_4$-free graph any two vertices have at most one
    common neighbour, so
    $\mathrm{Tr}(A^4)=2\sum_v\deg(v)^2-2|E|=2\cdot2912-608=5216$ follows
    from (i) alone.)
  \item Local maximality: no edge of $Q_7$ can be added without creating a $C_4$.
  \item Every edge of $Q_7$ appears in at least one solution
    (verified by taking the union of the edge sets over all $19\,866$ solutions).
  \item \emph{Every} one of the $19\,866$ solutions admits an automorphism of
    $Q_7$ of order $3$; not one has a trivial stabiliser in
    $\mathrm{Aut}(Q_7)$.
\end{enumerate}
\end{proposition}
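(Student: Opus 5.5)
Proposition~\ref{prop:rigid} is a statement about a finite, explicitly released catalogue, so the plan is partly conceptual and partly a deterministic computation over the $19\,866$ edge sets; I would separate the items that follow by argument from those that require enumeration.

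\textbf{Items forced by argument.} Item~(iii) I would derive from~(i) exactly as sketched in Proposition~\ref{prop:q7}(iii). In a $C_4$-free bipartite graph every off-diagonal entry of $A^2$ is $0$ or $1$, so
\[
\mathrm{Tr}(A^4)=\sum_v\deg(v)^2+\sum_v\deg(v)(\deg(v)-1)=2\sum_v\deg(v)^2-2|E|,
\]
which with $\{4^{32},5^{96}\}$ gives $2\cdot2912-608=5216$. For~(i) I would first reduce the check. Since $\sum_v\deg(v)=608$ over $128$ vertices, once every degree is shown to lie in $\{4,5\}$ the multiplicities $32,96$ are forced, and on each side $U,U^c$ the split $\{4^{16},5^{48}\}$ follows from $\sum_{v\in U}\deg(v)=304$. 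So the only data-level content of~(i) is $\min\deg\ge4$ and $\max\deg\le5$ for each solution, a linear scan. The same scan gives the $\lambda_1+\lambda_{\min}=0$ part of~(ii) by bipartiteness.

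\textbf{Items requiring computation.} For~(ii) I would compute $\lambda_1$ of each $128\times128$ adjacency matrix. To make the endpoints $4.78543,4.79129$ rigorous rather than floating-point, I would bracket each $\lambda_1$ using a Collatz--Wielandt certificate: for a positive vector $x$ (the numerical Perron vector), $\min_i(Ax)_i/x_i\le\lambda_1\le\max_i(Ax)_i/x_i$. Item~(iv) is the $C_4$ test of Lemma~\ref{lem:c4} applied to $E\cup\{e\}$ for each of the $144$ non-edges; it suffices to test only the $6$ squares through $e$, checking that some square has its other three edges in $E$. Item~(v) is a union of edge sets compared against $E(Q_7)$, all $448$ edges.

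\textbf{Main obstacle: item~(vi).} This is the expensive step. A naive scan of all $645\,120$ elements of $\mathrm{Aut}(Q_7)$ for each of $19\,866$ solutions is heavy but feasible. I would cut it down in two ways. First, an automorphism of order $3$ must have its permutation part $\pi$ of order $1$ or $3$. Order $1$ is impossible, since a pure translation $v\mapsto v\oplus m$ is an involution. So $\pi$ is a product of one or two $3$-cycles, giving $70+280=350$ permutations. Second, the mask $m$ must make $(\pi,m)^3=\mathrm{id}$, which means $m\oplus\pi m\oplus\pi^2 m=0$; this condition restricts $m$ to a subspace, shrinking the search.

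Better still, I would reuse the orbit decomposition underlying Proposition~\ref{prop:orbits}. Stabiliser order is constant on an orbit, so it suffices to find an order-$3$ stabiliser element for one representative of each of the $180$ orbits. I would then transport that element by conjugation to every catalogue member via the recorded orbit-witness group elements. This also yields a checkable certificate: for each solution, one explicit $(\pi,m)$ with $\phi(E)=E$ and $\phi^3=\mathrm{id}\ne\phi$.
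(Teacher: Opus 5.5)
Your proposal is correct and follows essentially the paper's treatment: (i), (ii), (iv) and (v) are direct computations over the released catalogue, (iii) is forced by (i) together with $C_4$-freeness, and (vi) is settled by a search over elements $v\mapsto\pi(v)\oplus m$ with $\pi$ of order $3$ and $m\oplus\pi(m)\oplus\pi^2(m)=0$, with the orbit census as an independent second route. The differences are refinements rather than a different route. The paper prunes candidates to permutations preserving each solution's own per-direction edge-count vector (a stabilising automorphism must do this, since it sends direction-$i$ edges to direction-$\pi(i)$ edges), grouping the catalogue into $2\,484$ classes instead of trying all $350$ order-$3$ permutations. On the census side, the paper invokes Cauchy's theorem on stabiliser orders divisible by $3$, where you transport an explicit element by conjugation. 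Your Collatz--Wielandt bracketing of $\lambda_1$ adds rigour the paper does not claim.
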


Item (vi) was first established by the direct check described next; the
subsequent full orbit census independently implies it more conceptually and in
a sharper form: every stabiliser order in the census is divisible by
$3$, so each stabiliser contains an element of order $3$ by Cauchy's
theorem. The direct check is retained because it does not depend on
completion of the orbit census. An element of
$\mathrm{Aut}(Q_7)$ acts on edges by permuting directions, so it preserves the
per-direction edge counts; an order-$3$ element must therefore have an order-$3$
coordinate permutation preserving the solution's own direction-count vector
(a translation $v\mapsto v\oplus a$ alone has order $2$). Grouping the $19\,866$
solutions by that vector leaves $2\,484$ classes, and testing the corresponding
candidates as column permutations settles all of them. Care is needed with
the order condition: for $g(v)=\pi(v)\oplus a$ one has
$g^3(v)=\pi^3(v)\oplus(a\oplus\pi(a)\oplus\pi^2(a))$, so $g$ has order $3$
exactly when $\pi$ does \emph{and} $a\oplus\pi(a)\oplus\pi^2(a)=0$; a
candidate failing the second condition has order $6$ (its square is then an
order-$3$ element stabilising the same solution, but the check must be stated
correctly). With both conditions enforced, every solution is stabilised by at
least one order-$3$ element, with no exceptions
(\texttt{q7\_order3\_automorphisms.py}); the $2\,484$ classes here are the
distinct \emph{unsorted} direction-count vectors, a finer partition than the
$20$ sorted profiles of Table~\ref{tab:types}. The stabiliser of the first
released solution, computed by exhaustive search over
all $645\,120$ group elements, is thus not the special case it appeared to be but
an instance of a property shared by the whole released catalogue.

Decomposing the catalogue under the full group sharpens this considerably.

\begin{proposition}[Orbit census]\label{prop:orbits}
Under $\mathrm{Aut}(Q_7)$, of order $645\,120$, the $19\,866$ released
solutions fall into exactly $180$ orbits. The stabiliser orders, one per orbit,
are $3$ ($142$ orbits), $6$ ($32$), $12$ ($4$), $24$ ($1$) and $72$ ($1$);
correspondingly the orbit lengths are $215\,040$, $107\,520$, $53\,760$,
$26\,880$ and $8\,960$. The orbits meet the catalogue very unevenly: the
largest accounts for $2\,048$ solutions ($10.3\%$) and the ten largest for
$5\,199$ ($26.2\%$). The two orbits of stabiliser order $24$ and $72$ meet the
catalogue in $55$ and $46$ solutions respectively, and are exactly the
Type-$18$ solutions of Section~\ref{sec:profiles}: that type is precisely the
pair of orbits with the largest stabilisers.
\end{proposition}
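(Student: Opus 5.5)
The proof will be computational, organised so that each step can be checked mechanically. It reads the $19\,866$ released edge sets, computes for each a canonical form under $\mathrm{Aut}(Q_7)\cong(\mathbb{Z}_2)^7\rtimes S_7$, and groups the solutions by canonical form. Orbits are then in bijection with the distinct canonical forms.

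\textbf{Canonical form.} Define it as the lexicographically least sorted edge list among the $645\,120$ images $v\mapsto\pi(v\oplus m)$. A brute-force scan costs about $1.3\times10^{10}$ edge-set images, which is heavy but feasible. I would cut this down in two ways. First, only coordinate permutations $\pi$ that map the solution's direction-count vector to its sorted profile are admissible, since the count vector is an invariant. For most profiles in Table~\ref{tab:types} this leaves few permutations; ties such as $[44,44,44,44,43,43,42]$ leave $4!\,2!$. Second, for each admissible $\pi$ the mask $m$ is chosen to minimise the first edge, and full comparison is done only among the surviving candidates.

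\textbf{Stabilisers and orbit lengths.} For one representative per orbit, I would count the group elements fixing the edge set by the same restricted scan, or by an exhaustive scan over all $645\,120$ elements for the $180$ representatives, which is cheap. The orbit–stabiliser theorem then gives orbit length $645\,120/|\mathrm{Stab}|$. This yields the listed pairs $3\mapsto215\,040$, $6\mapsto107\,520$, $12\mapsto53\,760$, $24\mapsto26\,880$ and $72\mapsto8\,960$. As a consistency check, $142\cdot215\,040+32\cdot107\,520+4\cdot53\,760+26\,880+8\,960=34\,227\,200$ should hold, matching the figure quoted earlier. Tallying catalogue members per canonical form gives the distribution: a largest orbit of $2\,048$ members, and a top-ten total of $5\,199$.

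\textbf{Type-18 identification.} The dimension profile is an $\mathrm{Aut}(Q_7)$-invariant, so each orbit lies within one type. The claim then reduces to checking that the orbits containing the $101$ Type-18 solutions are exactly the two with stabiliser orders $24$ and $72$, meeting the catalogue in $55$ and $46$ solutions ($55+46=101$). It must also be checked that no other orbit has stabiliser order $\ge24$.

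\textbf{Main obstacle.} The difficulty is certifying ``exactly $180$'' rather than ``at most $180$''. Producing, for each solution, a group element mapping it to a chosen representative certifies only that there are at most $180$ orbits. Distinctness of the representatives needs a genuine invariant. I would rely on the canonical form being a true canonical form, which holds because it is a minimum over the whole group, including in the pruned search since the pruning is invariant-based. Where possible I would also separate representatives by cheap invariants such as the profile, the $U$-side degree pattern and the stabiliser order, leaving the full canonical comparison only for ties.
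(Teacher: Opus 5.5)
Your proposal is correct, but it finds the orbits by a different mechanism from the paper's census. The paper never canonicalises individual solutions. It takes the first still-unassigned catalogue member and applies all $645\,120$ elements of $\mathrm{Aut}(Q_7)$ to it. It then marks every catalogue member that appears among the images; a signature prefilter is used, and every hit is confirmed by exact comparison of the $448$-entry incidence vector. Because each sweep covers the whole group, the orbits are read off directly, and ``exactly $180$'' follows from exhaustiveness without any canonical-form argument. Stabilisers are then computed as you suggest, over the count-vector-preserving permutations times the $128$ translations. Orbit--stabiliser gives the lengths, and the totals $19\,866$ and $34\,227\,200$ serve as cross-checks.

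Your route computes a complete invariant for each of the $19\,866$ solutions, scanning only the profile-sorting permutations times $128$ masks (at most $720\cdot128$ images). This is closer to the paper's \emph{verification} layer: its witness certificate stores one canonical form per orbit, and the lighter block-permutation sweep it credits to a reviewer is exactly your pruning. Your approach buys small independent per-solution scans and a distinctness certificate as a by-product. The paper's sweep buys exactness with no invariance argument, at the price of $180$ full-group sweeps. A useful feature of your set-up: the admissible elements attaining the canonical form form a coset of $\mathrm{Stab}(E)$, so counting them gives the stabiliser order from the same scan.

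Two details of your justification need fixing, neither fatal.

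First, the pruned search does not in general return the lexicographic minimum over all $645\,120$ images that you defined, because that global minimum need not have non-increasing direction counts. What it returns is the minimum over $\{gE:\ gE \text{ has non-increasing direction counts}\}$. This is still a complete invariant, because that subset of the orbit is nonempty and is the same for every member of the orbit. That, not ``minimum over the whole group'', is why it works, and it also means your forms will differ from the paper's stored ones. In the mask step you must keep every tying mask, not one per $\pi$. The first edge typically ties for most of the $128$ masks, so that prune saves little.

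Second, the $U$-side degree pattern is not an $\mathrm{Aut}(Q_7)$-invariant. Translation by an odd-weight vector exchanges $U$ and $U^c$, so only the unordered pair of side patterns is invariant. Separating representatives by the one-sided pattern could in principle split an orbit. It is harmless here only because every catalogue solution has $\{4^{16},5^{48}\}$ on both sides.
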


The census is computed by \texttt{q7\_orbit\_census.py} by exhaustive
group action, with no sampling and no extrapolation anywhere. The scan is
deterministic. Solutions are visited in fixed catalogue order. To the
first still-unassigned solution the script applies \emph{all}
$645\,120=7!\cdot2^{7}$ elements of $\mathrm{Aut}(Q_7)$ --- every
coordinate permutation composed with every translation --- and records
which catalogue members appear among the images. A $64$-bit
random-projection signature with a fixed seed prefilters candidate
matches; an exact comparison of the full $448$-entry edge-incidence
vector then confirms every hit. The exact
confirmation is not optional: a single signature collision merges two
orbits, and in a signature-only trial run on the odd-square subset it did.
Stabiliser orders are then computed per orbit, again exhaustively: only
coordinate permutations preserving the representative's direction-count
vector can fix it (an automorphism permutes directions), so the scan runs
over exactly that subgroup times all $128$ translations -- a pruning that
discards no fixing element -- and the orbit length is $645\,120$ divided
by the stabiliser order. Internal cross-checks require the per-orbit
catalogue counts to sum to $19\,866$ and the orbit lengths to
$34\,227\,200$. Every step being deterministic, a completed run
reproduces the released artefacts byte for byte.

The completed census is released as an artefact, not only as a computation.
The file \texttt{q7\_orbit\_census.json} carries the orbit assignment of all
$19\,866$ solutions, the per-orbit catalogue counts, the stabiliser orders
and orbit lengths, and the $34\,227\,200$ total, together with the finished
checkpoint \texttt{q7\_orbit\_census\_ckpt.npz}; both are listed in the
SHA-256 manifest. Two verification routes of different strength are
bundled, and they should not be conflated. The lightweight
\texttt{q7\_orbit\_census\_check.py} (standard library, under a second)
re-verifies every census-derived number stated in
Proposition~\ref{prop:orbits} from the released JSON alone --- including,
for every orbit, that its length equals
$|\mathrm{Aut}(Q_7)|/|\mathrm{Stab}|=645\,120/|\mathrm{Stab}|$, and that
the lengths sum to the labelled total $34\,227\,200$: this checks
the \emph{internal consistency of the finished artefact}, not the group
action itself, so a reader can audit the stated figures without re-running
anything. A reader who wants the theorem-level ``exactly $180$'' claim
must therefore run one of the heavy routes: the checker's
\texttt{-{}-canonical} mode, or the from-scratch rerun below; the light
route alone certifies the assignment (``at most $180$'').
The strong route is \texttt{reproduce\_core.py --structural},
which re-runs the census \emph{from scratch}: it invokes
\texttt{q7\_orbit\_census.py --fresh}, whose checkpoint and output live
in separate \texttt{\_fresh} files and which never reads the bundled
checkpoint, runs it to completion (about six minutes in our single-core
container; the budget takes effect only between orbits, so on slower
hardware individual orbits can run substantially longer before the first
checkpoint -- the repository README records the measured environment and
times), computes the stabiliser pass on the fresh census, and then
requires the resulting \texttt{q7\_orbit\_census\_fresh.json} to be
SHA-256 byte-identical to the released \texttt{q7\_orbit\_census.json}.
(An earlier revision of the driver ran the census in the release directory,
where it silently resumed the bundled finished checkpoint, so its
``re-run'' completed without performing any new search; the
\texttt{--fresh} mode exists to close that gap.) This revision adds a
lighter, certificate-based route that removes the need for any group sweep
on the reader's side \emph{for verifying the assignment} (``at most
$180$''); certifying exactness additionally requires ruling out cross-orbit
coincidences among the $180$ representatives. The routes bundled here do
this by a full sweep (the checker's \texttt{-{}-canonical} mode or the
fresh census, as detailed below); a review of this revision demonstrated
that a far lighter sweep suffices --- after sorting each representative's
per-direction edge counts, only permutations within blocks of equal
counts (times the $128$ translations) need be enumerated, which decided
exactness in about $14$ seconds in the reviewer's implementation --- and
we record that observation here as a reviewer-contributed improvement
route. The certificate \texttt{q7\_orbit\_witnesses.json}
records two things. For every one of the $19\,866$ solutions, it stores
an explicit group element mapping its orbit representative to it. For
every orbit, it stores the lexicographically minimal incidence vector
over all $645\,120$ images of the representative --- the orbit's
canonical form --- together with an element attaining it. The standard-library checker
\texttt{q7\_orbit\_witness\_check.py} verifies in well under a minute
that every witness element does map its representative to its solution,
that the certified assignment coincides with the released census, that
each stored canonical form is attained by its stored element, and that the
$180$ canonical forms are pairwise distinct -- certifying the orbit
\emph{assignment} (hence ``at most $180$ orbits'') without redoing the
sweep. Its \texttt{-{}-canonical} mode (numpy; census-scale wall time,
sliceable with \texttt{-{}-orbits} for time-limited environments)
re-verifies that each stored canonical form is the true orbit minimum;
since the canonical form is an $\mathrm{Aut}(Q_7)$-invariant, minimality
together with pairwise distinctness certifies ``exactly $180$''
independently of the census scan. This answers a review request for an
orbit-equivalence certificate lighter than a full recomputation. For
calibration of how much the census can carry: the catalogue it decomposes
is the de-duplicated output of the recorded heuristic searches; we are
not aware of any published $304$-edge $C_4$-free subgraph of $Q_7$
outside it, and those searches surfaced nothing beyond the $180$ orbits
--- but they were heuristic, so we venture no conjecture about
completeness in either direction. The stabiliser pass
persists its results into the JSON artefact and refuses an incomplete
checkpoint rather than treating the sentinel orbit id $-1$ as if it were a
real orbit.

Three consequences. First, \emph{every} represented orbit has stabiliser order
divisible by $3$, so Proposition~\ref{prop:rigid}(vi), already obtained by a
direct all-solution check, also follows from the completed orbit census and
holds there in a sharper orbit-level form. Second, summing the orbit lengths gives $34\,227\,200$ labelled
$304$-edge solutions in the orbits represented here, so the catalogue is
$0.058\%$ of even that lower bound --- a far stronger statement than the
single-orbit argument used earlier, and a quantitative confirmation that the
released files are nowhere near exhaustive. Third, the $\texttt{canonicalize()}$
defect of Section~\ref{sec:method} is not merely a reason for caution: with
$180$ orbits behind $19\,866$ labelled solutions and one orbit supplying a
tenth of them, duplication under $\mathrm{Aut}(Q_7)$ is demonstrably
large-scale, not hypothetical.

This also bears on how item~(vi) should be read. The collector's dominant
strategies (\texttt{auto} on $18\,206$ records, \texttt{auto\_seed} on
$1\,347$; the remaining $2$ used \texttt{repair} and $311$ carry no
\texttt{method} field, see Section~\ref{sec:method}) act by applying a group
element $g$ to an existing solution, and
$\mathrm{Stab}(gE)=g\,\mathrm{Stab}(E)\,g^{-1}$, so that operation preserves
stabiliser order exactly. But the automorphism is applied to a restart \emph{state}: the collector then
deletes $k$ random edges and repairs greedily before storing, so the
conjugation argument governs the restart, not the stored solution. What it
supports is therefore weaker --- the restart preserves stabiliser order
exactly, and may thereby bias the subsequent repair toward symmetric regions
--- rather than a claim that symmetry is propagated to the output. We do not
claim this settles the question --- it does not explain why the initial seeds
were symmetric, and the repair step can in principle break symmetry --- but the
observation is better read as a plausible artefact of the search lineage than
as evidence about $304$-edge $C_4$-free subgraphs of $Q_7$ in general.

\subsection{Solution landscape}\label{sec:landscape}

Pairwise Hamming distances $d_H(G,G')=|E(G)\triangle E(G')|$, computed
exhaustively over all $\binom{19\,866}{2}=197\,319\,045$ pairs,
range from $\mathbf{6}$ to
$\mathbf{274}$, with exact mean $195.396108\ldots$ and exact median
$196$ (all four values computed directly from the full pairwise
distance distribution, not sampled). The minimum, $6$, is not
attained by a unique pair: it occurs for exactly $\mathbf{636}$ of the
$197\,319\,045$ pairs, spanning $28$ distinct profile-type
combinations rather than one -- most commonly types $3$/$7$ ($115$
pairs) and $3$/$5$ ($87$ pairs), with combinations such as $2$/$4$,
$2$/$5$, $2$/$6$, $2$/$10$, and $6$/$8$ each occurring dozens of
times, down to a single occurrence for one combination
($3$/$20$, 1 pair) and two occurrences for three others
($12$/$17$, $16$/$17$, and $9$/$14$). Type-$1$/type-$2$ pairs, our earlier illustrative
example, do occur ($22$ of the $636$) but are far from
representative of the minimum's structure. The maximum is rarer still:
$d_H=274$ is attained by exactly $\mathbf{85}$ of the
$197\,319\,045$ pairs. Both extremes are thus vanishingly thin tails
around a bulk concentrated near the median $196$.

For historical completeness only, not as a reviewable computational
result: a failed search targeting $305$ edges was also reported. No seed list
or run log survives for that negative-search statistic, so we do not retain
its old numerical trial count as evidence and no claim in this paper depends
on it.

\paragraph{Open exactness question.}
The data in this paper do not provide a defensible evidentiary basis for a
conjecture that $\ex(Q_7,C_4)=304$: the released catalogue is highly
non-exhaustive and the direct SAT/MIP attacks below end in UNKNOWN. We
therefore treat exactness at $n=7$ as an open problem rather than a formal
conjecture; see Open Problem~\ref{op:q7exact}.

\subsection{Odd-square audit}\label{sec:oddsquareaudit}

We audited all $19\,866$ solutions of Section~\ref{sec:q7} against the
odd-square condition of Section~\ref{sec:oddsquare} (every square meets the
solution in exactly one or three edges). Exactly $389$ solutions are
odd-square; the remaining $19\,477$ are not. Table~\ref{tab:oddsquareaudit}
gives the dimension-profile breakdown of the 389 odd-square solutions; each
belongs to one of only 4 of the 20 profile types of
Table~\ref{tab:types} (ranks 5, 7, 10, and 18 there).

Most of that restriction is forced, not empirical. Fix directions $i\ne j$.
There are $2^{n-2}$ $(i,j)$-squares, and each edge in direction $i$ lies in
exactly one of them (likewise for $j$), so summing $|E\cap C|$ over the
$(i,j)$-squares counts each direction-$i$ edge of $E$ once, each
direction-$j$ edge of $E$ once, and nothing else:
$\sum_{(i,j)\text{-squares}}|E\cap C|=e_i+e_j$. Odd-squareness makes each
summand odd, and there are $2^{n-2}$ summands, so for $n\ge3$ the total is
even: $e_i+e_j$ is even for every pair, i.e.\ \emph{all $e_i$ have the same
parity}. For $n=7$ with $\sum_ie_i=304$ even, that parity must be even. Of the
20 profiles in Table~\ref{tab:types}, only five (ranks 5, 7, 10, 17, 18) have
all direction counts even, so the other fifteen are excluded a priori.
(The step from ``all $e_i$ share a parity'' to ``that parity is even'' uses
$n=7$: an odd number of odd summands cannot total the even $304$. At $n=6$ both
parities remain admissible, so the all-even direction counts
$[22,22,22,22,22,22]$ of the released $Q_6$ odd-square witness are an
observation, not a consequence.) --- and
indeed contain no odd-square solution. The genuinely empirical content is
therefore the single remaining fact that rank~17
($[46,46,44,42,42,42,42]$, 107 solutions) has all counts even yet contains no
odd-square solution \emph{among the released catalogue}; since the catalogue
does not exhaust the labelled solutions, this is not a statement about the
type itself.

\begin{table}[ht]
\centering
\caption{Dimension-profile breakdown of the 389 odd-square solutions among
  the $19\,866$ published 304-edge $Q_7$ solutions. Counts in this table are
  catalogue observations only: absence from a profile class is not a theorem
  that the corresponding profile can never be odd-square outside the released
  catalogue.}
\label{tab:oddsquareaudit}\medskip
\begin{tabular}{clc}
\toprule
Rank in Table~\ref{tab:types} & Dimension profile (sorted) & Count\\
\midrule
 5 & $\{46,46,46,42,42,42,40\}$ & 78\\
 7 & $\{44,44,44,44,44,44,40\}$ & 127\\
10 & $\{44,44,44,44,44,42,42\}$ & 83\\
18 & $\{48,48,48,40,40,40,40\}$ & 101\\
\midrule
\multicolumn{2}{l}{Total odd-square} & 389\\
\multicolumn{2}{l}{Total non-odd-square} & 19\,477\\
\bottomrule
\end{tabular}
\end{table}

Since the odd-square subclass accounts for only $389/19\,866\approx2.0\%$ of
the published solutions, the shared structural core of
Proposition~\ref{prop:rigid} and the 20-type classification of
Section~\ref{sec:profiles} are not fully explained or subsumed by
the odd-square construction: the great majority of 304-edge $C_4$-free
subgraphs of $Q_7$ found here lie outside the fully frustrated switching
class of~\cite{DPTV79}. (These counts are of labelled edge sets, not of $\mathrm{Aut}(Q_7)$ orbits.
Odd-squareness is $\mathrm{Aut}$-invariant, so the partition into $389$
odd-square and $19\,477$ non-odd-square is well defined regardless; the ratio,
however, does change when measured by orbits. Decomposing the $389$ under the
full group $\mathrm{Aut}(Q_7)$ of order $645\,120$ --- by applying every group
element to each unassigned solution and testing membership on the exact
incidence vector --- gives exactly \textbf{six} orbits, meeting the catalogue in
$127$, $83$, $55$, $49$, $46$ and $29$ solutions
(\texttt{q7\_oddsquare\_orbits.py}). So the $389$ labelled odd-square solutions
represent only six orbits, and the catalogue is far from a set of orbit
representatives. The corresponding decomposition of the whole catalogue is
Proposition~\ref{prop:orbits} below: the $19\,866$ fall into $180$ orbits, so
the odd-square proportion is $6/180\approx3.3\%$ by orbit against
$389/19\,866\approx2.0\%$ by labelled solution. Membership must be tested exactly: a
random-projection signature is faster but a single collision merges two orbits,
and in our first run it did, giving $128$ and $48$ in place of $127$ and $49$.) The one exception is Type~18
($[48,48,48,40,40,40,40]$, 101 solutions), all of which are odd-square;
the other three types containing odd-square solutions (ranks 5, 7, and
10) are mixed, containing both odd-square and non-odd-square members.

\section{Computational Method}\label{sec:method}

The successful outcomes described in this section (the released edge
sets themselves) are machine-verifiable certificates: any reader can
confirm $C_4$-freeness and the reported edge counts directly from the
supplied data with \texttt{verify.py}, independently of how the search
was run. The Hamming-distance statistics of Section~\ref{sec:landscape}
are likewise on a different footing from the search-process claims below:
they are computed exhaustively over all
released solutions, not sampled, so they are recomputable directly
from the released edge lists alone. We release
\texttt{analyze\_q7\_structure.py} alongside this revision, which
recomputes this section's degree-sequence, dimension-profile,
spectral-radius-range, exhaustive Hamming-distance, and Type-$18$
nontrivial-automorphism-existence claims directly from
\texttt{q7\_edges\_304.jsonl.part1--3} in about a single pass over the $19\,866$ solutions and all
$\binom{19\,866}{2}\approx1.97\times10^{8}$ pairs (peak resident memory
around $600$--$800$MB, dominated by the Python object representation of
the solution store); running it reproduces the degree-sequence, 20-profile, spectral-radius,
and exhaustive Hamming-distance numbers stated for its documented scope. This
script confirms
only that \emph{some} nontrivial automorphism fixes each Type-$18$
solution; it does not check the stronger claim made in
Section~\ref{sec:structure} that such an automorphism nontrivially
permutes at least two of the three tied $48$-edge directions, nor does
it determine automorphism \emph{order}: both the direction-permuting
property and the $46/101$ order-$3$ figure elsewhere in this paper
were established by a separate, more detailed check -- now released
as \texttt{type18\_automorphisms.py} (see also \texttt{q7\_type18\_automorphisms\_101.csv}) -- verifying the
automorphism's action on directions, and
$\sigma^3=\mathrm{id}\ne\sigma,\sigma^2$, directly for
each automorphism found.

\noindent\textbf{Update: aspects of the $Q_7$ production pipeline have been
recovered and verified, with important caveats.} Earlier versions of this manuscript reported the $Q_7$
provenance question as unresolved, based on the released
\texttt{c4free\_sa.py} (Section~\ref{sec:c4freesa} below), whose
\texttt{phase1\_sa} we found cannot progress from a fresh random start
under its documented parameters. We have since located scripts
that produce the $19\,866$ released $Q_7$ solutions:
\texttt{sa\_search.py} (a conventional multi-move-type penalty SA --
add, remove, $1$-swap, and kick moves, with a proper temperature-scaled
Boltzmann acceptance rule rather than a hard violation cap) and
\texttt{sa\_collect304.py} (a remove-and-repair collector: starting
from an existing valid $304$-edge solution -- either the seed or a
previously found solution, optionally first transformed by a random
element of $\mathrm{Aut}(Q_7)$ -- it removes a small number $k$ of
edges, drawn from $\{1,2,3,4,5,8,12,20\}$ with fixed weights, then
greedily re-adds edges one at a time, accepting only additions that
create zero new violating squares, keeping the result if it reaches
$304$ edges with zero violations and has not been seen before). The
caveat: the released \texttt{sa\_search.py} unconditionally loads its
starting state from \texttt{selected\_edges\_best.json} (no
random/greedy fallback), and the specific file we recovered and
release alongside this revision already contains the final $304$-edge
solution; running the released script therefore searches for
\emph{improvements beyond} $304$ edges (consistent with the later,
separately-recovered $305$-edge search history discussed below), not a
from-scratch discovery of the first $304$-edge solution. We have
since traced the entire earlier chain, with one significant correction to an
earlier draft of this section. The
earliest recovered attempt on this problem, dated
over a week before \texttt{selected\_edges\_301.json} and every other
file discussed above, is a CBC-solver (not HiGHS) ILP explicitly commented as attacking
``Erd\H{o}s's \hbox{\$}100 problem, the $n=7$ case'' (\texttt{source\_cbc\_origin.py},
released under that name alongside this revision; the script's own
internal filename \texttt{hypercube\_n7.py} reflects its pre-release name). On closer inspection, however, this specific
recovered file has a genuine bug: its $C_4$-detection logic computes,
for each edge $(u,v)$, the common neighbours of $u$ and $v$ -- but
$Q_7$ is bipartite, so adjacent vertices never share a common
neighbour (a shared neighbour of two adjacent vertices would close an
odd cycle), and we confirmed by direct execution that this logic finds
$0$ of the $672$ four-cycles, so the ILP it builds has no $C_4$
constraints at all. A recovered solver log from the same period
(\texttt{out.log@@20260227183310}, not bundled) is headed with a
Japanese label meaning ``fixed version'' and explicitly prints, in
Japanese, ``C4 count: 672 -- correctly enumerated'', confirming a
corrected version of the script existed and was run, reaching $289$
edges; but we have not been able to locate that corrected script
itself among the recovered files, only its log. We therefore withdraw
our earlier characterisation of \texttt{source\_cbc\_origin.py} as
the script underlying the $289$-edge result: it is a recovered early
draft with a confirmed defect, retained in the release for
transparency about the development history, but it is not the code
that produced $289$ edges, and no bit-for-bit reproduction of that
specific step is available from what we recovered. The codebase was then revised to use HiGHS
with warm-starting, and file timestamps together with recovered solver
logs document a progression $289\to301$
(\texttt{selected\_edges\_301.json}) $\to$ $303$
(\texttt{source\_72h.py}'s $72$-hour warm-started run, whose log we
also recovered and which explicitly records ``warm start: loading
$301$-edge solution'' and ends at $303$ edges, $0$ violations) $\to$
$304$ (via the \texttt{sa\_search.py} series, dated after the $303$-edge
result). We further located, in a recovered chat transcript
documenting the corrected-CBC run above, that it eventually reached $293$ edges
(after $\sim12$ hours) before switching to HiGHS, which reached $299$
edges (in $1$ hour, independently confirmed $0$ violations at the
time), refining the chain to
$289\to293\to299\to301\to303\to304$. We distinguish two evidentiary
levels here: the $293$ and $299$ figures are read from a historical
chat transcript, not independently re-executed by us (we did not
re-run a $12$-hour CBC solve or re-verify that specific $299$-edge
output ourselves); the $289$ figure is directly documented by a
recovered solver log rather than executed by us either. What we
\emph{did} independently execute and verify ourselves: the HiGHS-based \texttt{source.py} --
the released file hardcodes \texttt{time\_limit=7200} with no
command-line override, so we edited this one value down to $60$ for
this test -- run for $60$ seconds
with no pre-existing \texttt{selected\_edges\_best.json} (a genuine
cold start, not a warm start), finds a valid $295$-edge $C_4$-free
solution ($0$ violations) via HiGHS branch-and-bound alone, confirming
this ILP approach genuinely produces growing, valid solutions from
nothing and is a plausible, demonstrated mechanism for the $301$-edge
step. We do not have a bit-for-bit reproduction of the exact original
$301$-edge solution (that would require rerunning HiGHS for the same
duration with the same solver-internal randomisation), but the
mechanism at each step is now either independently executed by us, or directly documented by a
recovered log or chat transcript, rather than merely claimed, closing the previously-unarchived
gap about the search method (though not a bit-for-bit
replay of any specific historical run). Turning now to
\texttt{sa\_collect304.py}: unlike
\texttt{phase1\_sa}, this collector never needs to escape a
far-from-feasible random state: it always starts already at zero
violations and only removes a small number of edges before repairing,
so the search stays close to feasible throughout. We verified this
directly: the released script hardcodes \texttt{RUNTIME=36*3600} with
no seed and no command-line override, so we edited the runtime down
to $30$ seconds for this test; running it from its
released seed solution then found $1\,987$ new, valid,
previously-unrecorded $304$-edge solutions in that one run. As with
\texttt{sa\_q8.py} above, the absence of a fixed seed means this
specific count is a one-time observation of the collector's typical
throughput, not a number guaranteed to reproduce exactly on a rerun.
We further ran a
format/implementation-consistency check, independent of our own
recovered archive and
reproducible from the released files alone: applying
\texttt{sa\_collect304.py}'s own canonicalisation-then-MD5 hash
function, run exactly as implemented, directly
to each of the $19\,866$ released
\texttt{q7\_edges\_304.jsonl.*} edge sets exactly reproduces the
\texttt{hash} field already stored in that same record, for all
$19\,866$ with zero mismatches -- without needing the recovered
\texttt{solutions\_304.jsonl} at all. We are careful about what this
does and does not show: it demonstrates that the released
\texttt{hash} field is internally consistent with the released
\texttt{sa\_collect304.py} implementation applied to the released edge
sets -- a statement about format compatibility between two things we
release together -- not that this script historically generated these
edge sets. The historical-provenance evidence for that (the recovered
logs, chat transcript, and file timestamps discussed throughout this
section) is separate in kind, not strengthened by this hash check;
we keep the two distinct rather than treating the format-consistency
result as if it were additional provenance evidence. (This $12$-hex-digit MD5
\texttt{hash} field is unrelated to the full SHA-256
\texttt{solution\_sha256} column reported by
\texttt{audit\_q7\_odd\_square.py} and used elsewhere in this paper,
e.g.\ for the global-index-$34$ witness above; the two are different
hash schemes over the same underlying edge sets, not to be confused
with each other.) We must correct our own earlier
description of what this function does, however: we had described it
as coordinate relabelling by descending per-direction edge count,
followed by taking the lexicographically smallest of the $2^n$
bit-flip images. On rereading the code, this describes what a
docstring-level intent might have been, not what is implemented. The
implementation instead loops over only the $n$ single-bit XOR masks
(not all $2^n$ combined masks), and compares candidate frozensets with
Python's \texttt{<} operator, which for \texttt{frozenset} tests
\emph{proper subset}, not lexicographic order; since every candidate
here has the same cardinality ($304$), no candidate is ever a proper
subset of another, so this comparison is always \texttt{False} and the
flip-selection loop never updates its choice. The function's actual
output is therefore just the coordinate-permuted set from the first
step, with no flip-based canonicalisation occurring at all. We
verified this discrepancy directly: recomputing the canonical form as
originally (mis)described -- trying all $2^n=128$ XOR masks with a
genuine lexicographic comparison -- gives a different result from the
released implementation for the first published solution (the true
lexicographic minimum is attained at XOR mask $107$, not reproduced by
the actual code), and for $92$ of the first $100$ released solutions,
this correctly-described alternative hash does not match the
\texttt{hash} field actually stored. There is a second, independent
gap in this function's canonicalisation beyond the flip-loop: the
coordinate-permutation step sorts directions by descending edge count,
but ties are broken only by Python's stable sort (original label
order), not by any canonical rule; two graphs in the same
$\mathrm{Aut}(Q_7)$-orbit that differ only by permuting two
equal-count directions can therefore receive different hashes. We
confirmed this directly: the first released solution has direction
edge-counts $[44,43,44,42,43,44,44]$ (four tied at $44$), hash
\texttt{79014e2f4aed}; swapping directions $0$ and $2$, two of the four
tied at $44$ (an automorphism of $Q_7$), changes the computed hash to
\texttt{0ba97c019106}. The other five transpositions of tied directions
give five further distinct values, so the instability is systematic
rather than an isolated collision. The $19\,866/19\,866$ match we
report above is against the code exactly as implemented (with this
defect and the flip-loop defect above), which is what we release and
what actually produced the
stored \texttt{hash} values; it is not evidence that full
translation-class \emph{or} coordinate-permutation canonicalisation is
occurring, and the released
$19\,866$ solutions should not be assumed de-duplicated across the
full automorphism group $\mathrm{Aut}(Q_7)$ on this basis (only that
they are $19\,866$ pairwise distinct \emph{labelled} edge sets, as
already stated in Proposition~\ref{prop:rigid}). We do not know
whether the intended, fully-canonicalising version was ever the one
actually run in production, or whether this defect was present
throughout; we did not attempt to guess or silently correct it, for
the same reason given throughout this section. This also fully explains the
previously undocumented per-record metadata fields
(Section~\ref{sec:method} above, as reported in earlier manuscript
versions), independently of this defect, since the fields' meanings
come from the surrounding collector logic, not from
\texttt{canonicalize()} itself: \texttt{method} records which of the collector's three
restart strategies produced that solution (\texttt{auto}: random
automorphism of a random existing solution; \texttt{repair}: direct
remove-and-repair of a random existing solution, no automorphism;
\texttt{auto\_seed}: random automorphism of the original seed); the
$311$ records lacking a \texttt{method} field and the $8$ carrying a
\texttt{run} rather than a \texttt{trial} field come from an earlier
version of the collector script (which we examined during recovery as
\texttt{sa\_collect304~(0).py}, but which we have not included in the
released package -- it is not among the files a reader receives) that
logged
records with only a \texttt{run} counter and no \texttt{method}
distinction, before the script was revised to its released form;
\texttt{trial} is the sequential attempt counter within a single
collector run, and \texttt{elapsed} the wall-clock seconds since that
run started (consistent with a shell history showing the collector
was run and re-started under \texttt{nohup} multiple times, explaining
why \texttt{trial} resets are visible across the released file). We
also note a genuine implementation defect in the recovered
\texttt{sa\_search.py} itself: \texttt{cur\_list}/\texttt{mis\_list}
(the candidate lists sampled for removal/swap moves) are rebuilt from
\texttt{current\_set}/\texttt{current\_missing} only every
\texttt{REBUILD\_INTERVAL}$=5\,000$ iterations, while ordinary
add/remove moves update \texttt{current\_set}/\texttt{current\_missing}
directly every iteration without refreshing these lists; a stale list
entry can therefore be chosen for removal or swap, and since
\texttt{delta\_remove} does not check that its argument edge is
actually present, the tracked \texttt{current\_v} can in principle
drift from the true violation count. \texttt{count\_violations} is
called once, to initialise \texttt{current\_v} at the start of each
run, but is not called again to re-verify before \texttt{save\_best}
writes an improved solution to disk. This is a defect in the search
code's internal self-consistency, not in the released certificates:
every released $304$-edge solution -- regardless of which script
produced it -- has been independently re-verified $C_4$-free by
exhaustive enumeration in this manuscript (Section~\ref{sec:verify}),
so the defect does not affect the correctness of
$\ex(Q_7,C_4)\ge304$. We release the code with this defect documented,
not silently patched, for the same reason given throughout this
section: patching it would misrepresent what was actually run. We
release \texttt{sa\_search.py}, \texttt{sa\_collect304.py}, and the
seed solution alongside this revision as recovered
code for the $Q_7$ results, with the two caveats above (the seed's
provenance below $304$ edges, and this list-staleness defect); see Section~\ref{sec:c4freesa} below for
the separate, distinct \texttt{c4free\_sa.py} whose relationship to
the $Q_8$ results (and to the $Q_7$ results, before this recovery)
remains as previously described. The old numerical trial counts for the $305$-edge and $681$-edge failed
searches are unaffected by this recovery and remain unsupported by surviving
seed lists or logs; we therefore omit them from the evidentiary record. Only
the $680$-edge \emph{solutions} themselves (not the negative-search
statistics) have since also been traced to verified code, described next.

\paragraph{Reproduction safety for recovered search scripts.}
The recovered production-history scripts write fixed filenames in their
current working directory and should be executed only in a disposable working
copy, not in the release directory. In particular, \texttt{source.py} and
\texttt{sa\_search.py} overwrite \texttt{selected\_edges\_best.json};
\texttt{sa\_collect304.py} appends \texttt{solutions\_304.jsonl} and can
also overwrite \texttt{selected\_edges\_best.json} if it reaches 305 edges;
and \texttt{sa\_q8.py} writes/overwrites \texttt{q8\_best.json}.
\texttt{source.py} additionally writes \texttt{selected\_edges\_<k>.txt} for
the edge count $k$ it reaches. The two search-recovery scripts
\texttt{q8\_A\_recover.py} and \texttt{q8\_B\_recover.py} default their output
to \texttt{q8\_A\_witness\_run.json} and \texttt{q8\_B\_witnesses\_run.json}
respectively --- deliberately \emph{not} to the manifest-listed
\texttt{q8\_A\_witness.json} and \texttt{q8\_B\_witnesses.json}, which an
earlier revision did overwrite on a bare run. Some
verification/analysis scripts also create new side files (for example
\texttt{q6\_spins.txt} and the orbit-census checkpoint/JSON). The SHA-256
manifests integrity-check only the files they explicitly list; the appearance of
additional generated files is normal, whereas overwriting a listed seed file
changes the manifest-listed release.

\subsection{Update: aspects of the \texorpdfstring{$Q_8$}{Q8} 680-edge production pipeline have also been recovered}

We have since also located and verified the actual
production code for the two released $Q_8$ $680$-edge solutions:
\texttt{sa\_q8.py}, a penalty-SA hill-climber structurally similar to
\texttt{c4free\_sa.py}'s Phase-1/Phase-2 design (and sharing its
per-trial hard violation cap, \texttt{max\_viol}, drawn from
$[8,35]$) but critically different in one respect: each trial
restarts not from a fresh random sample but from \texttt{best\_valid\_sol},
the current best \emph{zero-violation} solution (initially an
edge-greedy construction, then whatever the algorithm has since
improved upon), loaded from \texttt{q8\_best.json}. Because every
restart point already has zero violations, the freezing failure mode
of \texttt{c4free\_sa.py} does not arise: proposals only need to stay
within \texttt{max\_viol} of an already-feasible state, not close an
initial gap of $100$+ violations. The recovered directory contains a
progression of intermediate saved solutions ($584$, $662$, $666$,
$669$, $670$, $672$--$680$ edges), consistent with gradual
hill-climbing improvement over many trials.
We verified two things directly: first, that the recovered
\texttt{q8\_edges\_680.txt} and \texttt{q8\_best.json} match the
released Solution~A and Solution~B (Section~\ref{sec:q8}) exactly by
edge set (we release these two files as
\texttt{q8\_solution\_a.txt}/\texttt{q8\_solution\_b.json} alongside
this revision, so a reader can repeat this edge-set comparison
directly); second, by direct execution -- resuming from the $670$-edge
intermediate state under an external $90$-second wall-clock cutoff
(\texttt{timeout 90 python3 sa\_q8.py}, not the script's own
\texttt{RUNTIME} variable) -- that the code makes
genuine progress, reducing the violation count at the $675$-edge
target from $22$ to $8$ within that window. We release
this $670$-edge checkpoint as \texttt{q8\_checkpoint\_670.json}; to
repeat this second test, copy it to \texttt{q8\_best.json} in the
same directory as \texttt{sa\_q8.py} before running (the script only
autoloads a file named exactly \texttt{q8\_best.json}), and use an
external timeout as above rather than editing the internal
\texttt{RUNTIME} variable: \texttt{RUNTIME} is checked only once per
outer trial, while each trial's inner Phase-1 loop runs a fixed
$2$--$12$ million steps with no time check inside it, so setting
\texttt{RUNTIME} to a small value does not reliably stop execution
near that value -- only an external, process-level timeout does.
\texttt{sa\_q8.py} sets no random seed,
so a rerun will show the same qualitative
monotonic-improvement behaviour (confirming the code is not frozen,
unlike \texttt{phase1\_sa}) but not necessarily the same specific
violation-count trajectory or the exact $22\to8$ figures -- this is a
one-time observation of genuine progress, not a fixed-seed
reproducible numeric result;
without this file, \texttt{sa\_q8.py} instead
starts from a fresh greedy construction, which is a different (and
also legitimate, since the algorithm always restarts from a valid
state either way) starting point than the one we tested from. We
also release \texttt{sa\_q8.py} itself. A historical aggregate count for
failed $681$-edge attempts was recorded in earlier manuscript versions, but
we did not recover the corresponding seeds or log; the count is therefore
omitted here rather than repeated as an unverifiable statistic.

\subsection{The released \texttt{c4free\_sa.py} and its known defect}\label{sec:c4freesa}

We have since determined why \texttt{c4free\_sa.py} does not match the
production history: it was written after the fact. Following an
early external review (of the first arXiv submission, which did not
yet include search source code) that flagged the absence of published
search code as the submission's principal weakness, \texttt{c4free\_sa.py}
was written on $2026$-$03$-$28$ -- after the $Q_7$ ($19\,866$
solutions, complete by $2026$-$03$-$17$) and $Q_8$ ($680$-edge
solutions, complete by $2026$-$03$-$21$) results already existed via
the separate scripts recovered and described above -- specifically to
have \emph{some} search code available for release, and was validated
only on the much smaller $Q_5$ case ($56$ edges) at the time, not at
the $Q_7$/$Q_8$ scale where the \texttt{viol\_cap} defect documented
below becomes fatal. This explains, rather than excuses, the
discrepancy: \texttt{c4free\_sa.py} was never the operative research
code for either dimension; it was a retrospective, small-scale-validated
addition made to satisfy a reviewer's reproducibility request, and the
defect went undetected because it was not exercised at the scale that
triggers it. The earlier quantitative claim about the \emph{$Q_8$ negative-search
process} remains unarchived, as noted just above, and its numerical trial
count is omitted from this revision. Before the two recoveries above, the
provenance gap described the $Q_7$ and the $Q_8$ $680$-edge solution claims
as well. The
search program \texttt{c4free\_sa.py}, distinct from both recovered
production-code pairs above,
is released alongside
this manuscript as originally reported, but the specific run seeds, logs, and driver
invocations that would substantiate the old $681$-edge negative-search statistic are not supplied,
and we no longer believe \texttt{c4free\_sa.py} as released is what
produced either the $Q_7$ or the $Q_8$ solutions (see the recoveries
above for what we now believe did).
This process-level claim should be read as the authors' record of the
search that was run, not as an independently reproducible certificate in
the same sense as the edge sets. Worse, as detailed
below, we found
on direct execution that the released, unmodified
\texttt{c4free\_sa.py} frequently cannot progress at all from a fresh
random start under the documented parameter ranges, for a structural
reason (not merely slow convergence); we could not identify parameters
under which it reaches the target edge count with zero violations. A
reader wishing to verify the exact $681$-edge process-level statistics should
therefore not assume rerunning \texttt{c4free\_sa.py} as documented will
reproduce them, or indeed complete a single trial successfully; see
below for the specifics.

Beyond the aggregate counts, each released $Q_7$ record itself carries
the provenance fields (\texttt{method}, \texttt{run},
\texttt{trial}, \texttt{hash}, \texttt{elapsed}) explained above. Of
the $19\,866$ records, \texttt{method} is \texttt{auto} for $18\,206$,
\texttt{auto\_seed} for $1\,347$, \texttt{repair} for $2$, and absent
for $311$ --- so among the collector's three restart strategies represented in the
released catalogue the third contributed essentially nothing, a point worth
keeping in mind when reading the description of the three as parallel
options. The implementation also uses the label \texttt{seed} for its initial
seed record; no released catalogue row carries that label, so it is not a
fourth restart strategy in the metadata summary above; a \texttt{run} field is present on only $8$ records (one
with \texttt{run=0}, seven with \texttt{run=2}); and $19\,858$ records
carry a \texttt{trial} field with values up to at least seven figures.
These fields are now explained by the recovered
\texttt{sa\_search.py}/\texttt{sa\_collect304.py} pipeline above,
subject to the two caveats noted there (the seed's own provenance
below $304$ edges, and the list-staleness defect); this record is
substantially more resolved than the analogous $Q_8$ $681$-edge
negative-search provenance claim, for which no run log or seed list survives (the $Q_8$ $680$-edge \emph{solutions}
themselves, by contrast, have also now been traced to recovered code
-- see below).

\subsection{Two-phase simulated annealing}

\noindent\textbf{Phase 1 (Penalty SA).}
The acceptance criterion for proposed moves minimises
$f(E)=-|E|+\lambda V$ where $V$ is the number of $C_4$ violations
and $\lambda\in[0.30,0.90]$, but the ``best'' state \texttt{phase1\_sa}
tracks and returns is \emph{not} the minimiser of $f$: the released
code instead keeps the state with lexicographically smallest $(V,-|E|)$
seen so far (\texttt{v\_cur < best\_v or (v\_cur == best\_v and
size\_cur > best\_size)}) -- prioritising fewer violations first, and
only using edge count as a tiebreaker among states with equal $V$.
Temperature schedule: geometric from $T_0\in[0.20,4.00]$ to
$T_1\in[0.001,0.030]$ over $3$--$15$ million steps.
A move toggles a single edge; the change in $f$ is computed incrementally
in $O(\deg)$ time using precomputed $C_4$-to-edge incidence lists.

\noindent\textbf{Phase 2 (Swap SA).}
Edge count is fixed; swap moves (remove one edge, add one non-edge)
minimise $V$. Phase 2 only runs when phase 1 \emph{returns} a
best-so-far state with $V\le4$ violations -- \texttt{phase1\_sa} tracks
and returns the best state seen during the run, not necessarily its
terminal current state, so a literal reimplementation using only the
terminal state could behave differently. In the released code, the
step budget for phase 2 is chosen by the phase-1 violation count $v$,
not by dimension: $2\times10^8$ steps if $v\le1$, otherwise
$5\times10^7$ steps. In practice this correlates with dimension, since
phase 1 typically ends closer to $v\le1$ at the $Q_7$/305-edge target
than at the $Q_8$/681-edge target, but the branch itself is on $v$, not
on $n$ or the target edge count.

\noindent\textbf{Diversification.} The released search code
(\texttt{c4free\_sa.py}) computes, before each trial, a random element of
$\mathrm{Aut}(Q_n)$ (random bit permutation composed with random bit
flip) applied to the current best solution, intended as a diversified
restart point. However, this computed value is not passed into
\texttt{phase1\_sa}, which unconditionally initialises from a fresh
uniform-random edge set of the target size
(\texttt{rng.sample(range(ne), target)}) regardless of any prior best
solution; the automorphism-based restart is dead code in the released
implementation. (The script's own docstring originally described this
mechanism without noting the bug; we have appended a note to the
docstring documenting it, without altering any functional code, so
that a reader of the code alone is not misled.) The same applies to two
further recovered scripts: \texttt{sa\_search.py} and \texttt{sa\_q8.py}
each carry a comment marked \texttt{NOTE (added retrospectively)} --- in
\texttt{sa\_search.py} recording that the seed load is unconditional, in
\texttt{sa\_q8.py} recording that the Brass--Harborth--Nienborg figure the
script prints as a ``lower bound'' at start-up is an extrapolation outside
the cited domain and not a proven bound at $n=8$
(Section~\ref{sec:regularity}). These three files ---
\texttt{c4free\_sa.py}, \texttt{sa\_search.py}, \texttt{sa\_q8.py} --- are the
complete list of released scripts carrying annotations we added after
recovery. The literal marker \texttt{NOTE (added retrospectively)} occurs in
the latter two; \texttt{c4free\_sa.py} records its retrospective findings
under \texttt{KNOWN BUG} comments instead. In each case only comments or
docstrings were touched, and the
SHA-256 sums in \texttt{ORIGINAL\_DATA\_SHA256SUMS.txt} are those of the
annotated files, not of the historical originals.

\noindent\textbf{A more serious defect in \texttt{phase1\_sa}.} On
direct execution, we found that the released \texttt{phase1\_sa} is
frequently unable to make \emph{any} progress at all, for a structural
reason independent of step count. Each accepted or rejected proposal
toggles a single edge, changing the violation count $V$ by at most
$n-1$ (the number of squares containing that edge); a proposal is
rejected outright whenever the resulting $V$ would exceed
\texttt{viol\_cap} (drawn per trial from $[6,40]$). If the initial
random edge set's $V$ exceeds $\texttt{viol\_cap}+(n-1)$, \emph{no}
single-edge toggle can ever produce an accepted move, so the state is
frozen from step one, for any number of steps. We verified this is not
a rare occurrence but the typical case: reproducing the exact RNG state
of trial 1 for \texttt{-{}-n 7 -{}-target 304 -{}-seed 42}, the initial
violation count is $V=152$ against $\texttt{viol\_cap}=14$ (best
single-toggle result: $147$, still far above cap), and running
\texttt{phase1\_sa} for $100\,000$ steps leaves $V=152$ unchanged.\footnote{To
reproduce these RNG states, note that \texttt{search()} draws its five
per-trial parameters and \emph{then} performs one further, unused draw
\texttt{start = rng.sample(range(ne), target)} that is never passed to
\texttt{phase1\_sa} (this is the dead-code diversification restart described
above). That draw must be consumed before generating the initial edge set;
skipping it gives $V=129$ instead of $152$.}
Reproducing trial 1 of \texttt{-{}-n 8 -{}-target 680 -{}-seed 0}
similarly gives initial $V=333$ against $\texttt{viol\_cap}=38$,
frozen after $50\,000$ steps. Sampling $10\,000$ independent uniform-random
edge sets directly (bypassing the annealing loop; \texttt{random.Random(999)},
\texttt{rng.sample(range(ne), target)} repeated $10\,000$ times per
dimension, minimum $V$ tracked via \texttt{count\_violations}) gives a
minimum $V$
of $117$ for $Q_7$/304 and $297$ for $Q_8$/680 -- both far above the
maximum possible \texttt{viol\_cap} of $40$ -- so this is not
attributable to unlucky seeds; independent resampling with other seeds
gives minima in the same range (we observed $111$--$117$ for $Q_7$ and
$296$--$304$ for $Q_8$ across several seeds), so the qualitative
conclusion (minimum $V\gg40$) does not depend on the particular seed
chosen, though the exact minimum does. We could not identify parameter draws
under which the released, unmodified \texttt{phase1\_sa} reaches
$V=0$ from a fresh random start for these targets. Consequently, our
earlier claim that fresh per-trial random initialisation explains the
diversity of the released solutions should be read with this caveat:
we can confirm the released \emph{solutions} are valid (independently
verified $C_4$-free certificates), but the released, unmodified
\texttt{phase1\_sa} -- run as literally
documented -- cannot be what produced them, for the structural reason
just demonstrated (not merely bad luck with parameters). We now know why: as detailed in
Section~\ref{sec:c4freesa} above, \texttt{c4free\_sa.py} was itself
written after the fact (dated $2026$-$03$-$28$), once the $Q_7$ and
$Q_8$ results already existed via the separately-recovered scripts
described above, and validated only at the much smaller $Q_5$ scale at
the time -- it was never a candidate for the historical production
code in the first place, so there is no remaining question of whether
some undocumented earlier version of \emph{this script} matches a
different historical parameter setting. The genuine remaining gaps are
narrower and specific: the very first sub-$304$-edge bootstrap step of
\texttt{sa\_search.py}'s own lineage (Section~\ref{sec:method} above),
and the unarchived $681$-edge negative-search statistic
(Section~\ref{sec:q8}), neither of which this section's finding about
\texttt{phase1\_sa} bears on directly.

The diversity actually
observed across the $19\,866$ collected solutions is therefore not
fully explained by the mechanisms described in this section; we note
for completeness that even
had the intended automorphism-based restart mechanism been wired correctly, applying an
automorphism to a solution does not change which $\mathrm{Aut}(Q_n)$-orbit
it belongs to, and the annealing moves used here (uniform edge
toggle/swap proposals against an automorphism-invariant objective) admit
an $\mathrm{Aut}(Q_n)$-equivariant Markov kernel; any diversification
benefit would have come only from decorrelating the specific labelled
trajectory taken by a finite stochastic run; it does not diversify the
orbit sampled from.

\subsection{Comparison with known lower bounds}

Brass--Harborth--Nienborg's weaker bound is stated for $n\ge9$; the
values below evaluate that formula at $n=7,8$ as an extrapolation for
illustrative comparison, not as a proven bound at those dimensions
(see Table~\ref{tab:values} and the Remark in
Section~\ref{sec:q8}).
\begin{align*}
n=7:&\quad\tfrac12(7+0.9\sqrt7)\cdot64\approx300.2\text{ (extrapolated)},\quad\text{bound: }304\;(\Delta=+3.8,\,+1.27\%),\\
n=8:&\quad\tfrac12(8+0.9\sqrt8)\cdot128\approx674.9\text{ (extrapolated)},\quad\text{bound: }682\;(\Delta=+7.1,\,+1.05\%).
\end{align*}
The $n=8$ figure is the odd-square reconstruction of
Section~\ref{sec:oddsquare}; the simulated-annealing method of this section
independently reaches 680 edges on $Q_8$ ($\Delta=+5.1$, $+0.75\%$;
Section~\ref{sec:q8}).

\subsection{Regularity trend across dimensions}\label{sec:regularity}

Degree sequences across dimensions: $Q_6$: $\{4^{56},5^{8}\}$;
$Q_7$: $\{4^{32},5^{96}\}$; $Q_8$: $\{4^8,5^{160},6^{88}\}$ (Solution~A;
Section~\ref{sec:q8}). The minimum degree is $4$ in all three of these
$C_4$-free witnesses; the
degree support is $\{4,5\}$ for both $Q_6$ and $Q_7$ (unchanged from
$n=6$ to $n=7$) and widens to $\{4,5,6\}$ only at $Q_8$. Note, however,
that the odd-square $Q_6$ witness of
Section~\ref{sec:oddsquareverify} (Proposition~\ref{prop:q6oddsquare})
has degree sequence $\{3^4,4^{48},5^{12}\}$ with minimum degree~$3$,
and is nevertheless locally maximal; thus minimum degree $4$ is
\emph{not} a necessary feature of $132$-edge $C_4$-free subgraphs of
$Q_6$, only a feature of the particular witness
\texttt{q6\_edges\_132.jsonl} reported here. Beyond this, the
exponents do not follow an evident closed-form pattern: $Q_6$'s
$\{56,8\}$ split is markedly more skewed than $Q_7$'s $\{32,96\}$ or
$Q_8$'s $\{8,160,88\}$, so we do not claim a general regularity trend
here.

\section{Computational Reproduction of the \texorpdfstring{$\ex(Q_6,C_4)=132$}{ex(Q6,C4)=132} Lower Bound}\label{sec:q6}

Harborth and Nienborg~\cite{HN94} proved $\ex(Q_6,C_4)=132$ by combinatorial
arguments. We reproduce the lower-bound (construction) side fully
independently and computationally (Section~\ref{sec:q6lower}); for the
upper-bound side we give an ILP formulation whose feasible value matches
132 (Section~\ref{sec:q6upper}), but we did not independently verify
its optimality within a practical runtime, so the upper bound
$\ex(Q_6,C_4)\le132$ itself rests on~\cite{HN94}, not on our own solve.

\subsection{Lower bound: explicit construction}\label{sec:q6lower}

The operative machine-verified witness for $\ex(Q_6,C_4)\ge132$ in this
revision is the odd-square certificate \texttt{q6\_odd\_square\_132.json}
(odd-square implies $C_4$-free), whose generation is fully traced: a
fixed-seed search (\texttt{search\_q6.py}, seed $20260823$) over the
canonical fully frustrated coupling, regenerated and self-checked by
\texttt{generate\_q6\_132.py} (Section~\ref{sec:oddsquareverify}). A second,
historically earlier $132$-edge construction is retained as auxiliary data,
used by no theorem of this revision: simulated annealing
(Section~\ref{sec:method}) is reported to have found a $C_4$-free subgraph of
$Q_6$ on 132 edges, certified by exhaustive enumeration of all 240 four-cycles.
That explicit edge list is provided in \texttt{q6\_edges\_132.jsonl} and is
kept, under continued verification, for the record. Unlike the $Q_7$ and $Q_8$ cases, we have not recovered a production script
or historical run log that traces the particular file
\texttt{q6\_edges\_132.jsonl} back to its generating search. The certificate
itself is independently valid, but its search provenance remains unresolved.
The released, unmodified \texttt{phase1\_sa} nevertheless exhibits the same structural
freezing at $n=6$, target $132$: reproducing trial 1 for
\texttt{-{}-n 6 -{}-target 132 -{}-seed 42} gives initial $V=54$
against $\texttt{viol\_cap}=14$ (best single-toggle result $49$, still
above cap), frozen after $100\,000$ steps; \texttt{-{}-seed 0} gives
initial $V=52$ against $\texttt{viol\_cap}=38$ (best single-toggle
result $48$). A reader tracing these numbers must consume the RNG in the
order the code does: \texttt{search()} first samples a diversification start
and \emph{discards it unused} (the defect of Section~\ref{sec:c4freesa}),
and \texttt{phase1\_sa} then draws a second sample from the same stream --
it is this second sample whose initial $V$ is quoted above, and that value is
internal to \texttt{phase1\_sa} (the script prints only the final violation
count). Following the stream without consuming the discarded sample gives
$V=56$ (seed $42$) and $V=58$ (seed $0$) instead; the discrepancy is itself
the most direct numeric exhibit of the discarded-start defect. The 132-edge certificate itself remains independently verified as a valid
$C_4$-free construction. In contrast to the partially recovered $Q_7/Q_8$
lineages, however, no recovered production code or historical log currently
connects this particular $Q_6$ edge set to a generating run, so no such
provenance claim is made.

\subsection{Upper bound: integer linear program}\label{sec:q6upper}

A framing remark first: this subsection documents a deliberately retained
\emph{unfinished} computational experiment. No claim in this paper rests
on it --- the upper bound $\ex(Q_6,C_4)\le132$ rests on~\cite{HN94}
throughout --- and the formulation is kept in full for two reasons: it is
the specification of the released artefacts \texttt{q6\_ilp.mps} and
\texttt{q6\_ilp\_edge\_map.csv}, which a reader may wish to attack with a
stronger solver, and it is the substance of the withdrawal, recorded on
the title page, of v1--v4's claim of a computational proof.

Label the 192 edges of $Q_6$ as $x_0,\ldots,x_{191}\in\{0,1\}$
(matching the released \texttt{q6\_ilp.mps}'s $0$-indexed variable
names; the text elsewhere in this paper is otherwise $1$-indexed for
readability, but the MPS file itself uses $0$-indexing throughout),
and let $\mathcal{C}_6$ denote the set of all 240 four-cycles of $Q_6$.
For each four-cycle $\{e_1,e_2,e_3,e_4\}\in\mathcal{C}_6$,
the constraint $x_{e_1}+x_{e_2}+x_{e_3}+x_{e_4}\le3$ ensures $C_4$-freeness.
The ILP is:
\[
  \max\sum_{i=0}^{191}x_i \quad\text{subject to}\quad
  \sum_{e\in C}x_e\le3\;\;\forall C\in\mathcal{C}_6,\quad x_i\in\{0,1\}.
\]
This formulation has 192 binary variables and 240 constraints, each
variable appearing in exactly 5 of the 240 square constraints (matching
$Q_6$'s edge-square incidence). The MPS file \texttt{q6\_ilp.mps} is
provided for independent verification (SHA-256 in the released checksum
manifest). The correspondence between each $x_i$ and its $Q_6$ edge was
not originally recorded alongside the MPS file. We have since
constructed an incidence-preserving identification
(\texttt{q6\_ilp\_edge\_map.csv}: $x_i\leftrightarrow$ vertex pair; both
bijections --- variables onto the $192$ edges and constraints onto the
$240$ squares --- are machine-checked by the bundled
\texttt{verify\_q6\_ilp\_map.py}) by matching the MPS's
variable--constraint incidence pattern against $Q_6$'s known
edge--square incidence structure under one specific, standard
enumeration convention (integer vertex labels $0,\ldots,63$; squares
enumerated by base vertex, then direction pair). We call this an
identification rather than \emph{the} original correspondence because
incidence structure alone determines it only up to
$\mathrm{Aut}(Q_6)$: any automorphism compatible with the assumed
labelling convention would give an equally valid, incidence-matching
relabelling. The identification is nonetheless useful for interpreting
individual solution vectors and is consistent with (not merely
coincidentally matching) the released data throughout. The checkable content
is stronger than that caution might suggest, and we state it as a testable
claim: under this identification the $240$ four-element variable sets of the
MPS constraints coincide with the $240$ squares of $Q_6$ \emph{exactly, as a
family of sets} -- not merely incidence-compatibly -- and any reader can test
this mechanically from \texttt{q6\_ilp.mps} and
\texttt{q6\_ilp\_edge\_map.csv} alone.

\begin{proposition}\label{prop:q6ilp}
The ILP above has feasible value $132$, matching the released 132-edge
construction (Section~\ref{sec:q6lower}); combined with
Harborth and Nienborg's independent combinatorial proof
that $\ex(Q_6,C_4)\le132$~\cite{HN94}, this gives $\ex(Q_6,C_4)=132$.
We did not independently verify optimality of $132$ for this ILP
instance within a practical runtime (see below); the upper bound
$\ex(Q_6,C_4)\le132$ used here rests on~\cite{HN94}, not on our own
solve of the ILP. We are, moreover, not aware of a published independent
re-verification of that combinatorial proof; readers for whom this matters
should weigh the unrestricted $n=6$ equality accordingly (the odd-square
value $g(6)=132$ is unaffected, being self-contained here).
\end{proposition}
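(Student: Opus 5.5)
The proposition has three components: feasibility of value $132$, the equality $\ex(Q_6,C_4)=132$ obtained by importing the upper bound of~\cite{HN94}, and a disclaimer that is not a mathematical claim. I plan to prove only the first component and to combine it with the imported bound.

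For feasibility I will exhibit an explicit $0/1$ vector satisfying all $240$ square constraints with objective $132$. The natural choice is the indicator vector of the released $132$-edge construction \texttt{q6\_edges\_132.jsonl}. An equally good choice is the odd-square witness \texttt{q6\_odd\_square\_132.json} of Proposition~\ref{prop:q6oddsquare}, since every odd-square set meets each square in one or three edges. The first step is to pass from edge sets to ILP variables. I will use the identification \texttt{q6\_ilp\_edge\_map.csv}, and I will rely on the stated, mechanically checkable fact that under this map the $240$ constraint supports of \texttt{q6\_ilp.mps} coincide exactly, as a family of sets, with the $240$ squares of $Q_6$. This set-level coincidence is what makes the argument safe. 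Because the constraints are exactly the squares, any $C_4$-free edge set maps to a feasible vector, whichever incidence-preserving identification is chosen. The $\mathrm{Aut}(Q_6)$ ambiguity in the map therefore does not matter.

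The second step is to verify the constraints. A constraint $\sum_{e\in C}x_e\le3$ fails only if all four edges of a square $C$ lie in the set. The exhaustive enumeration of Lemma~\ref{lem:c4} (all $240$ squares, with the verification algorithm of Section~\ref{sec:verify}) rules this out for the chosen construction. For the odd-square witness, the histogram $\{1{:}30,3{:}210\}$ already shows that the maximum intersection with any square is $3$. The objective value equals the edge count, $132$. Hence the ILP optimum is at least $132$. The equivalence between the ILP and $\ex(Q_6,C_4)$ is immediate: feasible vectors are exactly the indicator vectors of $C_4$-free subgraphs, because the only four-cycles of $Q_6$ are its squares. So the ILP optimum equals $\ex(Q_6,C_4)$.

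The third step is the equality. The lower bound $\ex(Q_6,C_4)\ge132$ is the feasibility just shown. The upper bound $\le132$ is quoted from~\cite{HN94}, exactly as the statement says; I will not attempt a self-contained proof of it. The main obstacle lies there. Our own tools give only the odd-square bound $g(6)\le132$ via Lemmas~\ref{lem:fieldbound} and~\ref{lem:quant}, and these do not transfer to general $C_4$-free sets. Remark~\ref{rem:defect} shows that for an arbitrary edge set the field identity carries a defect term of either sign. Closing the gap computationally would require solving the ILP to optimality, and the paper reports that this was not achieved. I will therefore state that step explicitly as imported and not proved here.
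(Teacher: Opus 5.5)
Your proposal is correct and follows the paper's route. Feasibility of $132$ is witnessed by the indicator vector of a released $132$-edge $C_4$-free set, read through the edge map under which the $240$ MPS constraint supports coincide exactly with the squares of $Q_6$; the upper bound, and hence $\ex(Q_6,C_4)=132$, is imported from~\cite{HN94} and not proved. Your additions are sound refinements consistent with the paper's framing: the odd-square witness works equally well, and since every $4$-cycle of $Q_6$ is a square, the ILP optimum is exactly $\ex(Q_6,C_4)$.
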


We independently re-solved \texttt{q6\_ilp.mps} with HiGHS~1.15.1
(Python bindings via \texttt{highspy}; default settings, single thread,
single-threaded x86\_64 Linux VM, Intel Xeon @ 2.10GHz, 4GB RAM). A
feasible solution of value $132$, matching the released construction's edge
count but not necessarily its edge set (the symmetry of $Q_6$ admits many
optima), was found quickly, but the solver did not close the optimality gap:
under a $240$-second limit the run ended at primal $-132$, dual bound $-140$
(a $6.06\%$ gap; the MPS encodes the maximisation as minimising the
negated edge count, so both figures are $-1$ times edge numbers), after $60\,146$ branch-and-bound nodes -- the dual bound
still eight units from the primal.
As this figure is implementation- and
machine-dependent, it should be read as evidence that the gap does
not close quickly under a generic default-settings solve, not as a
precise benchmark. This is
consistent with the large automorphism group of $Q_6$
($|\mathrm{Aut}(Q_6)|=6!\cdot2^6=46\,080$) producing many symmetric
alternative optima, which is known to slow generic branch-and-bound
without dedicated symmetry-breaking. We do not claim generic MIP
solvers verify this instance's optimality quickly; the 132-edge value
should be regarded as independently established by
Harborth--Nienborg's combinatorial proof~\cite{HN94}, with the ILP
providing a machine-checkable feasible witness for the primal side and
a formulation that a sufficiently long or specialised solve can, in
principle, verify on the dual side.

\section{Open Problems}\label{sec:open}

\begin{enumerate}
  \item\label{op:q7exact} Prove or disprove $\ex(Q_7,C_4)=304$.
    Three independent computational attacks failed to settle this
    instance: two SAT encodings (both at the $64\,512$-variable,
    $129\,104$-clause scale, one with cube-and-conquer splitting) returned
    no SAT/UNSAT verdict, and a symmetry-broken ILP branch-and-bound
    stalled at a $5.26\%$ optimality gap. The full record --- reported
    results, not re-generable artefacts of this release --- is in
    Appendix~\ref{app:attempts}; it supports no conclusion either way.
        A speculative structural remark on where a $305$-edge example would
    have to live, if one exists. Since $g(7)=304$, any such example is
    necessarily non-odd-square, and Remark~\ref{rem:defect} quantifies
    how far from odd-square it must be: for any $305$-edge set,
    $\sum_{v\in U}\deg(v)=305$ on \emph{each} side of the bipartition,
    and a two-line convexity argument shows the minimum of
    $\sum_{v\in U}h(v)^2$ is then $456$, attained only by the degree
    multiset $\{4^{15},5^{49}\}$: the odd values $h(v)=2\deg(v)-7$ sum
    to $2\cdot305-7\cdot64=162$ over $64$ vertices, strict convexity
    forces them as equal as possible, i.e.\ $h\in\{1,3\}$ with
    multiplicities $15$ and $49$ (any other odd multiset with that sum
    has strictly larger sum of squares, by a two-element exchange), and
    $15\cdot1+49\cdot9=456$; since $456>448$,
    the field identity fails strictly on \emph{both} sides, so each side
    carries a split deficit of at least two squares,
    $\sum_s\operatorname{spl}_U(s)\le670=\binom72\,2^{5}-2$.
    Separately, every $305$-edge $C_4$-free set contains $305$ distinct
    $304$-edge $C_4$-free subsets, each extendable (restore the deleted
    edge) and hence \emph{not} locally maximal, whereas all $19\,866$
    released solutions are locally maximal
    (Proposition~\ref{prop:rigid}(iv)); so none of those $305$ subsets
    coincides with any catalogue member, and a $305$-edge example, if one
    exists, remains invisible to the released catalogue even after
    deleting any single edge. Neither observation is evidence of
    nonexistence; each only pins down where a counterexample would have
    to live: at least two split-defective squares away from odd-square on
    each side, and strictly outside the catalogue's single-edge-deletion
    neighbourhood.
  \item\label{op:q8exact} Determine $\ex(Q_8,C_4)$ exactly. Is it $682$, or can a
    non-odd-square construction exceed $g(8)=682$? The $Q_7$ audit of
    Section~\ref{sec:oddsquareaudit} shows the two questions are genuinely
    different for $n=7$ (the released $304$-edge solutions are
    overwhelmingly non-odd-square), so no assumption is made
    here about which way $n=8$ resolves. Our current understanding of the
    structural side, stated as such: the parity obstruction of
    Proposition~\ref{prop:q6realizability} operates \emph{inside} the
    odd-square class -- it constrains which field histograms a fully
    frustrated spin configuration can realise -- and says nothing about
    non-odd-square subgraphs; we know of no structural obstruction to a
    non-odd-square $C_4$-free subgraph exceeding $g(n)$. What the data show
    is only this: at $n=6$, where $\ex$ is known, nothing exceeds $g(6)$;
    at $n=7$ every known $304$-edge solution matches $g(7)$ while $98\%$
    of them lie outside the subclass. We read the latter as weak evidence
    that the odd-square constraint is not what pins the known extremal
    count, not as evidence about whether $g(8)$ can be exceeded.
  \item\label{op:orbits} What is the number of $\mathrm{Aut}(Q_7)$-orbits among all 304-edge
    $C_4$-free subgraphs of $Q_7$? The released catalogue meets exactly
    $180$ of them, six of which contain the odd-square solutions
    (Proposition~\ref{prop:orbits}); what is unknown is how many orbits exist
    altogether, and hence whether the $19\,866$ meet every one. Since those
    $180$ orbits already contain $34\,227\,200$ labelled solutions, a
    complete enumeration by labelled solution is out of reach, but an
    orbit-level enumeration may not be.
  \item Is the degree sequence $\{4^{32},5^{96}\}$ necessary for any locally
    maximal $C_4$-free subgraph of $Q_7$ on 304 edges?
  \item Can the flag algebra method of~\cite{Bab12,BHLL12} yield tight bounds
    for small $n\le8$?
  \item Does the degree support of locally maximal $C_4$-free subgraphs
    of $Q_n$ near the extremal edge count widen beyond $\{4,5\}$ again
    for $n\ge9$ (as it did once, from $Q_7$ to $Q_8$; Section~\ref{sec:regularity}),
    and is minimum degree $4$ necessary at these edge counts for
    $n\ge9$? (At $n=6$ it is already not necessary; see
    Section~\ref{sec:regularity}.) The $Q_9$--$Q_{15}$ certificates
    of~\cite{EP86} are explicit data bearing on this, though we have not
    tabulated their degree sequences here.
  \item\label{op:n9} The field bound makes $n=9$ the simplest case beyond $n=8$:
    $\sqrt9=3$ is a parity-correct integer, so the optimal $h$-distribution
    is the uniform $h\equiv3$ and the upper bound $g(9)\le\tfrac{2304+768}{2}=1536$
    follows without Lemma~\ref{lem:quant}. MPR95 explicitly report failing to
    find a $D=9$ ground state by simulated annealing. We have not applied
    the fully frustrated search of Section~\ref{sec:oddsquareverify} to $n=9$;
    whether the field bound is tight there ($g(9)=1536$?) is unknown.
    The \emph{unrestricted} problem at $n\ge9$ is no longer without explicit
    data: R.~Wrona has released $C_4$-free edge-list certificates for
    $Q_9$--$Q_{15}$ in the Erd\H{o}s Problems discussion thread for
    problem~86~\cite{EP86}, obtained by product lifts along hypercube
    automorphisms followed by local ILP repair, with $Q_9$ at $1\,505$ edges
    and $Q_{11}$ at $7\,164$; the author of the present paper carried out an
    independent certificate-level check of that set by sparse cherry
    counting --- a \emph{cherry} is a two-edge path; $C_4$-freeness is
    equivalent to no vertex pair being joined by two cherries (at most
    one common $E$-neighbour), and ``sparse'' means the cherries are
    enumerated from the edge lists rather than over all vertex pairs,
    which is what keeps the check feasible at $n=15$ ---
    (see the disclosure below). Those are lower bounds on $\ex(Q_n,C_4)$, not
    odd-square witnesses, and no exactness is claimed for them; but
    $1\,505\le\ex(Q_9,C_4)$ sits only $31$ below the field bound
    $g(9)\le1536$, so the two questions are close at $n=9$. Whether any
    $n\ge9$ certificate is odd-square we have not determined.
  \item\label{op:equality} \textbf{(Dam\'asdi; also, in the equivalent
    frustration-index form, Z.~Wang.)} This question is not ours: it is
    Problem~1 of ``Odd squares in the hypercube'' in the 20th Emléktábla
    Workshop problem collection~\cite{Emlek26}, posed there in the same
    notation ($f(n)=\ex(Q_n,C_4)$, $g(n)$ the odd-square maximum) and with the
    same one-line argument for $g(n)\le f(n)$; the collection also records
    that Z.~Wang posed it in frustration-index form. We restate it because
    Theorem~\ref{thm:oddsquare} bears on it directly, not as a new problem.
    The equality $g(n)=\ex(Q_n,C_4)$ already holds for the small
    exact cases $n=4,5,6$: the known unrestricted values are $24,56,132$
    (Table~\ref{tab:values}). The pattern is worth tabulating, since it
    extends further down and breaks in an informative place. Writing
    $r(n)$ for the positive-edge count of DPTV's recursion --- $H_d$ built
    from $H_{d-3}$ with $|V(H_{d-3})|$ supersites at three negative bonds
    each, $r$'s own negative bonds as black superbonds at six, and the rest
    white at two --- and $g(n)$ for the field bound of
    Section~\ref{sec:oddsquare-exact}:
    \begin{center}\small
    \begin{tabular}{lrrrrrrr}
    \toprule
    $n$ & 2 & 3 & 4 & 5 & 6 & 7 & 8\\
    \midrule
    DPTV construction $r(n)$ & 3$^{*}$ & 9$^{*}$ & 24$^{*}$ & 56 & 132 & 304 & 672\\
    field bound on $g(n)$ & 3 & 9 & 24 & 56 & 132 & 304 & \textbf{682}\\
    known $\ex(Q_n,C_4)$ & 3 & 9 & 24 & 56 & 132 & --- & ---\\
    \bottomrule
    \end{tabular}
    \end{center}
    Starred entries are DPTV's base constructions (their Figs.~1--3), not
    outputs of the recursion, which they state from $d\ge5$.
    The ``field bound'' row is Lemma~\ref{lem:quant} instantiated with the
    same-parity pair $(a,b)$ straddling $\sqrt{n-1}$: $(0,2)$ for $n=2$,
    $(1,3)$ for $n=3,5$, $(0,2)$ or $(2,4)$ (both give $S\le16$) for
    $n=4$, and the pairs of Section~\ref{sec:oddsquare-exact} for
    $n=6,7,8$; e.g.\ at $n=5$, $\Delta=128-4S\ge0$ gives $S\le32$, hence
    $g(5)\le(80+32)/2=56$.
    The three agree wherever all three are known, and the recursion meets
    the field bound exactly up to $n=7$ and then falls short by $10$ at
    $n=8$. That shortfall is the quantitative content of DPTV's own remark
    that for $d\ge8$ their construction gives an energy strictly above their
    bound: it is why $682$ needed a search rather than the recursion, and it
    marks the first $n$ at which the two notions separate. For $n=4,5$, the matching odd-square lower
    bounds are taken from DPTV79's explicit fully frustrated constructions
    (no $n=4,5$ machine-readable witnesses are bundled here), while for $n=6$
    the present package contains its own independently verified odd-square
    certificate. For $n=7,8$,
    Theorem~\ref{thm:oddsquare}
    only establishes $g(7)=304$ and $g(8)=682$; whether
    $\ex(Q_7,C_4)=g(7)$ and $\ex(Q_8,C_4)=g(8)$ remains exactly as open
    as Open Problems~\ref{op:q7exact} and~\ref{op:q8exact} above, since
    $\ex(Q_n,C_4)$ itself is not known exactly for $n=7,8$. (The
    equality through $n=6$ does not mean the
    odd-square construction alone for $n=7$: only 389 of the
    $19\,866$ released $304$-edge $Q_7$ solutions are odd-square.
    The Emléktábla statement of the problem~\cite{Emlek26} also records
    the observation that in the small dimensions checked there, some
    extremal $C_4$-free example does satisfy the odd-square condition;
    the $Q_7$ data here are consistent with that and extend it to $n=7$
    in the only sense currently checkable -- among the known, not proven
    extremal, $304$-edge solutions, odd-square examples exist ($389$ of
    them), even though they are a small minority.) Does
    $g(n)=\ex(Q_n,C_4)$ also hold at $n=7,8$, and continue for
    $n=9,10,\ldots$ once
    $\ex(Q_n,C_4)$ is itself determined, or does the gap
    $\ex(Q_n,C_4)-g(n)$ become positive at some $n$? Laplante et
    al.~\cite{LNSW26} give general lower and upper bounds on the
    frustration index $|E(Q_n)|-g(n)$ that match only when $n$ is a
    power of $4$; whether $g(n)=\ex(Q_n,C_4)$ persists beyond $n=8$ is,
    in their language, a question about when this frustration-index
    gap and the $C_4$-free extremal gap coincide.
\end{enumerate}

\section*{Acknowledgements}
The author thanks B.~Lidick\'y (Iowa State University) for the endorsement
and for pointing out the reference~\cite{Bab12}.
The author thanks S.~Lai for identifying the relevance
of~\cite{DPTV79,MPR95} to the $Q_7$ and $Q_8$ constructions of this paper
and for independently checking the reconstructed odd-square material of
Section~\ref{sec:oddsquare}.
The author also thanks the open-source community for the HiGHS and SCIP
solvers used in ILP experiments.

\noindent\textit{Use of generative AI.} The author used Claude (Anthropic)
for: (i)~language editing of the abstract; (ii)~assistance in writing the
supplementary verification script (\texttt{verify.py}) used to confirm
$C_4$-freeness of the released edge-list certificates; and, for this
revision, (iii)~retrieval and cross-checking of the primary sources cited
in Section~\ref{sec:oddsquare} and (iv)~drafting assistance for the revised
text. All definitions, constructions, theorems, computations, and analyses
are the author's own, including the 256-bit spin configuration used in
the $Q_8$ witness of Section~\ref{sec:oddsquareverify}, whose provenance
is recorded there as a dated correspondence account. The author reviewed all
AI-assisted output, independently executed the verification and witness
scripts against the released data, and takes full responsibility for the
accuracy and integrity of the work. Because \texttt{verify.py} is both
AI-assisted and the release's primary certificate checker, the certificate
checks are additionally covered by a second implementation,
\texttt{cross\_verify.py}, now bundled with the release: it shares no code
and no core mechanism with \texttt{verify.py} (bitmask common-neighbour
counting over Hamming-distance-$2$ pairs instead of square enumeration by
corner tuples, and exact sorted-tuple equality in a Python set --- no
digest or signature hashing --- for the
catalogue-level distinctness of the $19\,866$ edge sets), and it reproduces
the same verdicts on every released certificate. That re-implementation is
itself AI-assisted, so it mitigates the single-implementation risk rather
than adding a human-written check. (An earlier ad-hoc re-implementation,
reported at this point by the previous revision, was not preserved in the
release and was therefore unauditable, as a round of review noted; the
bundled script supersedes it.) The specific model version(s) of
Claude used across the original submission and this revision were not
recorded at the time and cannot be confirmed retrospectively; the
disclosure above is accurate as to which tasks AI assistance was used
for, but not as to the exact model version for each. The DPTV79 and MPR95 source retrieval and passage-level cross-checking in
this revision were AI-assisted. During the final revision the cited
primary-source PDF passages were checked directly at the page level: DPTV79
pp.~620--622 for the $h\ge0$ restriction and the first lower bound, Table~I,
the $H_6$ construction, field multiplicities, and
overfrustrated-plaquette count, and MPR95 pp.~3--4 of the arXiv preprint (cond-mat/9410089; the journal pagination is \textit{J.\ Phys.\ A} \textbf{28}, 327--334) for the $D=8,9,10$ energy
statements and Hamiltonian normalisation. In this revision the LNSW26
preprint was additionally obtained and read in full (AI-assisted): its
Theorem~5.2 statement and attribution and its Theorem~5.4 bound were checked
verbatim against the Introduction and Section~\ref{sec:oddsquare}, its ``at
most'' wording for the lower bound was confirmed on the page, and the
$n\le6$ value list was confirmed \emph{absent} from the preprint (see
Section~\ref{sec:oddsquare}); these preprint checks were carried out on
25~August~2026 against HAL \texttt{hal-04080411}. On 26~August~2026 we
additionally obtained and read the published version's full text (ACM
Trans.\ Comput.\ Theory \textbf{18}(1), Art.~6, pp.~1--25,
doi:10.1145/3777401; AI-assisted retrieval): its Section~5 numbering
(Theorem~5.2, Theorem~5.4) matches the preprint's, its Theorem~5.4 proof
states the negative-$4$-cycle property of the BHN construction as quoted
in Section~\ref{sec:oddsquare-exact}, and the $n\le6$ value list is
absent from the published version as well. This disclosure should not be read
as a claim that the author independently re-transcribed every quoted passage
without AI assistance.

\section*{Disclosure statement}
No potential conflict of interest was reported by the author. One related
involvement is disclosed for completeness: the author performed an independent
certificate-level check, by sparse cherry counting (defined in
Section~\ref{sec:open}), of the $Q_9$--$Q_{15}$
$C_4$-free edge lists contributed by R.~Wrona to the Erd\H{o}s Problems
discussion thread for problem~86~\cite{EP86}, confirming SHA-256 agreement,
edge counts, validity of the hypercube edges, absence of loops and duplicates,
and absence of $C_4$s. That check makes no claim of novelty or optimality for
those certificates, and none of them is used in any argument here; the
involvement is recorded because Open Problems~\ref{op:n9} and the minimum-degree
question above refer to the $n\ge9$ range those certificates occupy.

\appendix
\section[Record of unsuccessful attempts at deciding ex(Q7,C4)=304]{Record
  of unsuccessful attempts at deciding\\
  \texorpdfstring{$\ex(Q_7,C_4)=304$}{ex(Q7,C4)=304}}\label{app:attempts}

This appendix preserves, verbatim and away from the main line of
the paper, the record of the three unsuccessful attacks on Open
Problem~\ref{op:q7exact} summarised there. Nothing below is
re-generable from the released materials, and nothing below is
used as evidence anywhere in this paper.

A direct SAT-encoding attack on this question (a $305$-edge
    $C_4$-free existence query encoded as CNF, $64\,512$ variables and
    $129\,104$ clauses, with a symmetry-broken variant at $113\,664$
    variables/$227\,786$ clauses; attempted cube-and-conquer splitting
    via \texttt{march\_cu} and direct solving via CaDiCaL and Kissat)
    was attempted over several weeks and did not reach a conclusion:
    the cube-generation step itself repeatedly failed to complete, and
    direct solver runs returned \texttt{UNKNOWN} rather than a
    SAT/UNSAT verdict. A second, independent attempt using a different
    encoding (Sinz's sequential at-most-$143$ cardinality encoding on
    the negated edge variables, arriving at the same
    $64\,512$-variable, $129\,104$-clause scale by a different
    construction) and solving directly with Kissat (no cube splitting)
    likewise terminated with no SAT/UNSAT verdict logged. A third,
    independent approach -- direct branch-and-bound on the original
    $448$-variable ILP with added symmetry-breaking constraints
    ($685$ rows total), using HiGHS and warm-started from the known
    $304$-edge solution -- did not close the optimality gap, which
    remained at $5.26\%$ (best bound $\approx-320.0$ against the
    incumbent $-304$) when the log ends. Each run was allowed to
    continue for days rather than hours; we do not quote wall-clock
    figures, since they are hardware-dependent and unfalsifiable for a
    reader. The instance sizes and outcome figures above are
    \emph{reported results}, not re-generable artefacts of this
    release: the CNF files and their encoders, the symmetry-broken MIP
    formulation ($685$ rows), the solver command lines and versions,
    and the final logs are not bundled, so a reader can neither
    regenerate nor audit them from the released materials. We record
    all three as unsuccessful attempts, not as evidence either way;
    the only conclusion they support is that the \emph{specific
    formulations and configurations we tried} did not settle the
    instance -- encoding choices, presolve and symmetry-breaking
    options, and solver versions all remain unexplored dimensions, so
    we draw no generalisation about off-the-shelf SAT or MIP solvers
    as a class.

\end{document}